\theoremstyle{plain}
\newtheorem{theorem}{Theorem}[section]
\newtheorem{lemma}[theorem]{Lemma}
\newtheorem{proposition}[theorem]{Proposition}
\newtheorem{corollary}[theorem]{Corollary}
\theoremstyle{definition}
\newtheorem{definition}[theorem]{Definition}
\theoremstyle{remark}
\newtheorem*{remark}{Remark}
\theoremstyle{plain}
\newcommand{\Aut}{\ensuremath{\mathrm{Aut}}}
\renewcommand{\bar}[1]{\ensuremath{\overline{#1}}}
\newcommand{\C}{\mathcal{C}}
\newcommand{\D}{\mathcal{D}}
\newcommand{\E}{\mathcal{E}}
\newcommand{\eset}{\emptyset}
\newcommand{\FinSet}{\mathbf{FinSet}}
\newcommand{\G}{\mathcal{G}}
\renewcommand{\hat}[1]{\ensuremath{\widehat{#1}}}
\DeclareMathOperator*{\hocolim}{hocolim}
\newcommand{\Hom}{\ensuremath{\mathrm{Hom}}}
\newcommand{\initial}{\varnothing}
\newcommand{\ob}{\mathrm{ob}\,}
\newcommand{\op}{\mathrm{op}}
\renewcommand{\S}{\mathbb{S}}
\newcommand{\Sets}{\mathbf{Sets}}
\newcommand{\Sp}{\mathbf{Sp}}
\renewcommand{\tilde}[1]{\ensuremath{\widetilde{#1}}}
\newcommand{\caseif}{&\mathrm{if}\ }
\newcommand{\caseotherwise}{&\mathrm{otherwise}}
\def\myVCENTER#1{\vcenter{\hbox{$\m@th#1$}}}
\renewcommand{\hat}[1]{\ensuremath{\widehat{#1}}}
\newcommand{\ASB}{\mathbf{Asm}}    
\newcommand{\bs}{\smallsetminus}
\newcommand{\Cat}{\mathbf{Cat}}
\renewcommand{\S}{\mathbb{S}}
\newcommand{\SCob}[2]{\{\uppercase{#1}_{#2}\}_{#2\in\uppercase{#2}}}
\renewcommand{\tilde}[1]{\ensuremath{\widetilde{#1}}}
\newcommand{\V}{\mathscr{V}}
\newcommand{\I}{\mathcal{I}}
\DeclareMathOperator{\hocofib}{hocofib}
\newcommand{\R}{\mathbb{R}}
\newcommand{\gG}{\mathfrak{G}}
\newcommand{\sP}{\mathfrak{P}}
\newcommand{\F}{\mathcal{F}}
\newcommand{\A}{\mathbb{A}}
\newcommand{\W}{\mathcal{W}}
\newcommand{\defeq}{\stackrel{\mathrm{def}}=}
\newcommand{\ps}[1]{\mathbf{#1}}
\newtheorem{maintheorem}{Theorem}
\newtheorem*{theorem:K0}{Theorem~\ref{thm:K0}}
\newtheorem*{theorem:subassemb}{Theorem~\ref{thm:subassemb}}
\newtheorem*{theorem:cofiber}{Theorem~\ref{thm:cofiber}}
\newtheorem*{theorem:cofibercalc}{Theorem~\ref{thm:cofibercalc}}
\newtheorem*{theorem:scissors}{Theorem~\ref{thm:scissors}}
\tikzset{
  move/.style={>->},
  sub/.style={densely dashed,->},
  cover/.style={densely dashed,->>}
}
\newcommand{\arrow}{\diagArrow[m-]}
\renewcommand{\dot}{{{\raisebox{.2ex}{\scalebox{.3}{$\bullet$}}}}}
\def\morpair#1#2#3#4{\begin{tikzpicture}[baseline=(A.base)] \node[anchor=east]%
    (A) at (0,0) {$#1$}; \node[anchor=west] (B) at (2em,0) {$#2$};%
    \diagArrow{<-, bend left}{A}{B}!{#3} \diagArrow{<-, bend %
      right}{A}{B}!{#4}%
  \end{tikzpicture}}
\def\smorpair#1#2#3{\begin{tikzpicture}[baseline=(A.base)] \node (A) at (0,0) {$S$}; \node
    (B) at (3em,0) {$#1$}; \diagArrow{<-, bend left}{A}{B}^{#2} \diagArrow{<-,
      bend %
      right}{A}{B}_{#3}%
  \end{tikzpicture}}
\newcommand{\bsmorpair}[3]{\Big[\smorpair{#1}{#2}{#3}\Big]}
\newtheorem*{quillena}{Quillen's Theorem A}
\theoremstyle{remark}
\newtheorem{example}[theorem]{Example}
\newtheorem{non-example}[theorem]{Counterexample}
\newtheorem{observation}[theorem]{Observation}
\theoremstyle{definition}
\theoremstyle{plain}
\providecommand{\isdraft}{false}
\providecommand{\draftdef}[4][0]{%
    \ifthenelse{\equal{\isdraft}{true}}{%
		\expandafter\newcommand\csname #2\endcsname[#1]{#3}}{%
		\expandafter\newcommand\csname #2\endcsname[#1]{#4}}}
\tikzset{
 >=stealth, 
 ^/.style={auto},
 under/.style={auto,swap}, 
 _/.style={auto,swap},
 description/.style={fill=white,inner sep=2pt}, 
 !/.style={fill=white,inner sep=2pt},
 raise/.style={above=.4ex},
 implies/.style={double,double equal sign distance,-implies},
 overbackground/.style={-,line width=6pt,draw=white},
 nodefont/.style={font=\normalsize},
}
\tikzstyle{matrix of math nodes}=[%
\newenvironment{general-diagram}[2]{\begin{center}\begin{tikzpicture}%
\matrix (m) [matrix of math nodes, row sep=#1,%
column sep=#2]}{\end{tikzpicture}\end{center}}
\newenvironment{general-inline-diagram}[2]{\begin{tikzpicture}[baseline]%
\matrix (m) [matrix of math nodes, row sep=#1,%
column sep=#2]}{\end{tikzpicture}}
\newenvironment{general-fixed-diagram}[2]{\begin{center}\begin{tikzpicture}%
\matrix (m) [matrix of math nodes, row sep=#1,%
column sep=#2, text height=2.2ex, text depth=0.7ex]}{\end{tikzpicture}\end{center}}
\newenvironment{inline-diagram}[1][1.8em]{\begin{general-inline-diagram}{1.8em}{#1}}{\end{general-inline-diagram}}
\newenvironment{diagram}[1][2.5em]{\begin{general-diagram}{2.5em}{#1}}{\end{general-diagram}}
\newenvironment{squisheddiagram}[1][2.25em]{\begin{general-diagram}{.9em}{#1}}{\end{general-diagram}}
\newenvironment{diagram-fixed}[1][2.5em]{\begin{general-fixed-diagram}{2.5em}{#1}}{\end{general-fixed-diagram}}
\def\innaisnum{\@ifnextchar 1{m-}{%
			   \@ifnextchar 2{m-}{%
			   \@ifnextchar 3{m-}{%
			   \@ifnextchar 4{m-}{%
			   \@ifnextchar 5{m-}{%
			   \@ifnextchar 6{m-}{%
			   \@ifnextchar 7{m-}{%
			   \@ifnextchar 8{m-}{%
			   \@ifnextchar 9{m-}{%
			   \@ifnextchar 0{m-} }}}}}}}}}}
\def\diagName#1{%
  \futurelet\diagChar\diagNameDecide#1}
\def\matrixPrefix{m-}
\def\diagNameDecide{%
	\ifx\diagChar 1\matrixPrefix%
	\else\ifx\diagChar 2\matrixPrefix%
	\else\ifx\diagChar 3\matrixPrefix%
	\else\ifx\diagChar 4\matrixPrefix%
	\else\ifx\diagChar 5\matrixPrefix%
	\else\ifx\diagChar 6\matrixPrefix%
	\else\ifx\diagChar 7\matrixPrefix%
	\else\ifx\diagChar 8\matrixPrefix%
	\else\ifx\diagChar 9\matrixPrefix%
	\else\ifx\diagChar 0\matrixPrefix%
	\fi\fi\fi\fi\fi\fi\fi\fi\fi\fi}
\newcommand{\placement}[1]{\if#1^auto\else\if#1_under\else\if#1-description\else #1\fi\fi\fi}
\def\diagDrawArrow#1#2#3#4#5{\path[font=\scriptsize] (#2) edge[#1] node[#4] {$#5$} (#3);}
\def\diagDrawWEArrow#1#2#3#4#5{\path[font=\scriptsize] (#2) edge[#1] node[outer %
  sep=.4ex, #4] {$#5$} node[above=-.7ex,sloped] {$\sim$} (#3);}
\newcommand{\diagArrow}[4][]{\diagArrowGo{#1}{#2}{#3}{#4}}
\def\diagArrowGo#1#2#3#4{%
    \def\diagDrawArrowHelper{\diagDrawArrow{#2}{#1#3}{#1#4}}  %
            \futurelet\diagArrowLookedAtToken\diagArrowDecide}			
\def\diagWEArrow#1#2#3{%
    \def\diagDrawArrowHelper{\diagDrawWEArrow{#1}{m-#2}{m-#3}}  %
            \futurelet\diagArrowLookedAtToken\diagArrowDecide}			
\def\diagDrawArrowHelperOther#1#2#3{\diagDrawArrowHelper{#2}{#3}}
\def\diagDrawArrowHelperBlank{\diagDrawArrowHelper{}{}}
\def\diagArrowDecide{%
   \ifx\diagArrowLookedAtToken^%
       \let\next=\diagDrawArrowHelper %
   \else\ifx\diagArrowLookedAtToken_ %
	   \let\next=\diagDrawArrowHelper %
   \else\ifx\diagArrowLookedAtToken! %
       \let\next=\diagDrawArrowHelper %
   \else\ifx\diagArrowLookedAtToken+ %
       \let\next=\diagDrawArrowHelperOther %
   \else %
       \let\next=\diagDrawArrowHelperBlank %
   \fi\fi\fi\fi %
   \next}
\newcommand{\tk}{\tikz[baseline, font=\scriptsize]}
\def\inlineDrawArrow#1^#2{\;\tk\draw[#1](0,0.5ex)-- node [above] {$#2$} (\inlinearrowlength,0.5ex);\;}
\def\inlineArrow#1{\def\inlineDrawArrowHelper{\inlineDrawArrow{#1}}  %
            \futurelet\inlineArrowLookedAtToken\inlineArrowDecide}
\def\inlineArrowDecide{%
   \def\inlineDrawArrowHelperBlank{\inlineDrawArrowHelper^{}}
   \ifx\inlineArrowLookedAtToken^%
       \let\next=\inlineDrawArrowHelper %
   \else\ifx\inlineArrowLookedAtToken_ %
	   \let\next=\inlineDrawArrowHelper %
   \else\ifx\inlineArrowLookedAtToken- %
       \let\next=\inlineDrawArrowHelper %
   \else %
       \let\next=\inlineDrawArrowHelperBlank %
   \fi\fi\fi %
   \next}
\def\inlineDrawWEArrow#1^#2{\;\tk\draw[#1, label distance=.5ex](0,0.5ex) -- node[label=above:$#2$, above=-.6ex] {$\sim$} (\inlinearrowlength,0.5ex);\;}
\def\inlineWEArrow#1{\def\inlineDrawArrowHelper{\inlineDrawWEArrow{#1}}  %
            \futurelet\inlineArrowLookedAtToken\inlineArrowDecide}
\def\inlineDoubleDrawArrow#1#2^#3{\;\begin{tikzpicture}[baseline, font=\scriptsize]%
    \draw[#1](0,1.1ex)-- node [above] {$#3$} (\inlinearrowlength,1.1ex);%
    \draw[#2](0,0.4ex)-- node {} (\inlinearrowlength,0.4ex); \end{tikzpicture}%
    \;}
\def\inlineDoubleArrow#1#2{\def\inlineDrawArrowHelper{\inlineDoubleDrawArrow{#1}{#2}}  %
            \futurelet\inlineArrowLookedAtToken\inlineArrowDecide}
\def\subscriptDrawArrow#1{\;\tk\draw[#1](0,0.5ex)-- node {} (\subscriptarrowlength,0.5ex);\;}
\def\subscriptArrow#1{\def\subscriptDrawArrowHelper{\subscriptDrawArrow{#1}}  %
            \futurelet\subscriptArrowLookedAtToken\subscriptArrowDecide}
\def\subscriptArrowDecide{%
   \def\subscriptDrawArrowHelperBlank{\subscriptDrawArrowHelper^{}}
   \ifx\subscriptArrowLookedAtToken^%
       \let\next=\subscriptDrawArrowHelper %
   \else\ifx\subscriptArrowLookedAtToken_ %
	   \let\next=\subscriptDrawArrowHelper %
   \else\ifx\subscriptArrowLookedAtToken- %
       \let\next=\subscriptDrawArrowHelper %
   \else %
       \let\next=\subscriptDrawArrowHelperBlank %
   \fi\fi\fi %
   \next}
\newcommand{\makelong}[2][\longinlinearrowlength]{{\def\inlinearrowlength{#1}#2}}
\newcommand{\makeshort}[2][\shortinlinearrowlength]{{\def\inlinearrowlength{#1}#2}}
\newcommand{\ms}[2][\shortinlinearrowlength]{\makeshort[#1]{#2}}
\def\inlinearrowlength{0.5}
\def\subscriptarrowlength{0.4}
\def\to{\diagArrow[m-]{->}}
\def\eq{\diagArrow[m-]{implies, -}}
\newcommand{\arrowsquare}[5][->]{\diagArrow[m-]{#1}{1-1.mid east}{1-2.mid west}^{#2} \diagArrow[m-]{#1}{1-2}{2-2}^{#4} \diagArrow[m-]{#1}{2-1.mid east}{2-2.mid west}^{#5} \diagArrow[m-]{#1}{1-1}{2-1}_{#3}}
\newcommand{\largearrowsquare}[4]{%
\foreach \x in {#1,...,#2} %
  \foreach \y in {#3,...,#4} %
  {%
    \ifthenelse{\equal{\x}{#2}}{}{\pgfmathparse{int(1+\x)}%
	    \to{\x-\y}{\pgfmathresult-\y};}%
	\ifthenelse{\equal{\y}{#4}}{}{\pgfmathparse{int(1+\y)}%
	    \to{\x-\y}{\x-\pgfmathresult};}%
  }%
}
\begin{document}

\begin{frontmatter}

\title{The $K$-theory of Assemblers}
\author{Inna Zakharevich}
\address{The University of Chicago \\ Department of Mathematics \\ 5734 S
  University Ave, Rm 208 \\ Chicago, IL 60637}

\begin{abstract}
  In this paper we introduce the notion of an assembler, which formally encodes
  ``cutting and pasting'' data.  An assembler has an associated $K$-theory
  spectrum, in which $\pi_0$ is the free abelian group of objects of the
  assembler modulo the cutting and pasting relations, and in which the higher
  homotopy groups encode further geometric invariants.  The goal of this paper
  is to prove structural theorems about this $K$-theory spectrum, including
  analogs of Quillen's localization and d\'evissage theorems.  We demonstrate
  the uses of these theorems by analyzing the assembler associated to the
  Grothendieck ring of varieties and the assembler associated to scissors
  congruence groups of polytopes.
\end{abstract}

\begin{keyword}
\MSC[2010] 19D99 \sep  18F25 \sep 18F30
\end{keyword}

\end{frontmatter}


\maketitle



\section{Introduction}

Scissors congruence groups appear in many different areas of mathematics, from
classical geometry of Euclidean, spherical and hyperbolic space
\cite{dupontsah82, sah79, dupont01, goncharov99} to the study of birational
geometry and motivic integration \cite{larsenlunts03, looijenga02,
  nicaisesebag11} to questions about definable sets and logic \cite{dries98,
  hrushovskikazhdan06}.  However, as the contexts in which these appear are so
different there is no general theory for the study of such groups, and very
little exploration of their peculiar blend of geometry and combinatorics.  In
this paper we propose a framework for the study of these groups by replacing
them with $K$-theory spaces.  This framework is optimized to work with the
combinatorial nature of cutting and pasting sets together and does not rely on
any further algebraic structure.  

Our approach constructs an object called an \textsl{assembler} (see
Definition~\ref{def:assembler}), which encodes the data necessary to perform
scissors congruence.  The basic object underlying an assembler is a small
Grothendieck site where all morphisms are monic; the topology then encodes
exactly which objects ``assemble'' to form other objects.  Two maps $A \rto C$
and $B \rto C$ are considered disjoint if $A\times_C B$ exists and is equal to
the initial object.  We prove the following theorem in Section~\ref{sec:KC}.
\begin{maintheorem} \label{thm:K0}
  There exists a functor $K:\ASB \rto \Sp$ from the category of assemblers to
  the category of spectra such that for any assembler $\C$, $\pi_0K(\C)$ is the
  free abelian group generated by objects of $\C$ modulo the relations
  \[[A] = \sum_{i\in I} [A_i] \qquad \text{for any finite disjoint covering
    family } \{\makeshort{A_i \rto A}\}_{i\in I}.\]
\end{maintheorem}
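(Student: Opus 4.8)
The plan is to construct $K(\C)$ by running a Segal-type infinite loop space machine on a category of ``formal finite decompositions'' of objects of $\C$, and then to read off $\pi_0$ from the standard description of $\pi_0$ of a special $\Gamma$-space as a group completion.

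First I would attach to each assembler $\C$ a functor $\W(\C)\colon\Gamma^{\mathrm{op}}\rto\Cat$. On the pointed set $n_+=\{0,1,\dots,n\}$ its value should be a category whose objects are finite lists $(A_1,\dots,A_n)$ of objects of $\C$, regarded as a formal disjoint union $A_1\sqcup\cdots\sqcup A_n$, and whose morphisms are generated by isomorphisms and permutations of the entries together with \emph{refinements}, in which an entry $A_i$ is replaced by the list of members of a finite disjoint covering family $\{A_{ij}\rto A_i\}_j$; to make composition associative one records a morphism as an equivalence class of such data (this, I expect, is what the bracketed pair-of-spans notation in the paper encodes), and one must check that the pullback of a covering family along a member of another covering family is again a disjoint covering family, so that refinements can be amalgamated. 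The Segal maps $\W(\C)(n_+)\rto\W(\C)(1_+)^{\times n}$ are the coordinate projections and $\W(\C)(0_+)$ is terminal. A morphism of assemblers preserves monomorphisms, covering families, and the empty pullbacks witnessing disjointness, hence induces a map of $\Gamma$-categories; applying the nerve, geometric realization, and spectrification then yields $K\colon\ASB\rto\Sp$, functorially.

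Second I would verify the Segal condition, that $\W(\C)(n_+)\rto\W(\C)(1_+)^{\times n}$ is an equivalence of categories --- largely bookkeeping, the only real input being that disjointness data for a union assembles from disjointness data for the summands. Granting this, $\W(\C)$ is special, so $\pi_0 K(\C)$ is the group completion of the commutative monoid $M(\C)\defeq\pi_0\W(\C)(1_+)$, and it remains to identify $M(\C)$. Its elements are components of $\W(\C)(1_+)$, i.e.\ equivalence classes of objects of $\C$ (an identity is a singleton covering family, so isomorphic objects --- and the initial object, via the empty family --- already behave as expected) under the relation generated by refinement, with addition from concatenation. I claim $M(\C)$ is the free commutative monoid on objects of $\C$ modulo the congruence generated by $[A]=\sum_{i\in I}[A_i]$ for finite disjoint covering families $\{A_i\rto A\}_{i\in I}$. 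Surjectivity is immediate; for injectivity one must show the zig-zags in $\W(\C)(1_+)$ impose \emph{no relations beyond} this congruence, which I would prove by a confluence (Church--Rosser) argument: any two refinements of a list admit a common refinement obtained by mutually pulling back the two covering families, so each component has a canonical fully refined representative. Finally, since group completion is a left adjoint it carries the coequalizer presenting $M(\C)$ to the corresponding coequalizer of abelian groups, so $\pi_0 K(\C)=M(\C)^{\mathrm{gp}}$ is exactly the free abelian group on objects of $\C$ modulo the stated relations.

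The step I expect to be the real obstacle is the tension between the definition and the computation: the morphisms of $\W(\C)$ must be rigid enough for composition to be well defined (which forces the equivalence-class-of-spans formalism and the use of pullbacks of covers) yet supple enough that $\pi_0$ sees \emph{every} covering relation, and it is precisely the assembler axioms on disjointness and on pullback-stability of covering families that make both the Segal condition and the confluence argument go through.
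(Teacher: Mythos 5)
Your overall strategy is the same as the paper's: build a $\Gamma$-space out of a category $\W(\C)$ of finite tuples of objects with covering-refinement morphisms, observe that it is special, and read off $\pi_0$ as the group completion of $\pi_0$ at level 1. However, two of the difficulties you anticipate are artifacts of your proposed implementation and dissolve in the paper's setup. First, you worry that composability forces an ``equivalence class of spans'' formalism, guessing this is what the $\bsmorpair{A}{f}{g}$ notation encodes; in fact that notation appears only much later (in the epimorphic-assemblers section) and has nothing to do with $\W(\C)$. In the paper a morphism $\{A_i\}_{i\in I}\rto\{B_j\}_{j\in J}$ of $\W(\C)$ is simply a map $f\colon I\rto J$ of index sets together with morphisms $f_i\colon A_i\rto B_{f(i)}$ in $\C$ such that each fiber $\{f_i\colon A_i\rto B_j\}_{i\in f^{-1}(j)}$ is a finite disjoint covering family --- so morphisms point from the refinement to the coarser tuple, and composition is literal composition (axiom (R) and the sieve-closure of covering families guarantee the composite fibers are again covers). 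No spans, no quotient by equivalence. Second, your confluence argument for ``injectivity'' is not needed: $\pi_0$ of the classifying space of a category is \emph{defined} to be objects modulo the equivalence relation generated by morphisms, and since each morphism of $\W(\C)$ decomposes as a coproduct of covering families $\{A_i\}_{i\in f^{-1}(j)}\rto\{B_j\}$ (giving exactly $[B_j]=\sum_{i\in f^{-1}(j)}[A_i]$) and conversely every finite disjoint covering family produces such a morphism, the congruence induced by zig-zags is tautologically the one generated by the covering relations --- there is simply no room for ``extra'' relations, so nothing like Church--Rosser is required. (Confluence-type reasoning via axiom (R) is genuinely used in the paper, but for cofilteredness in Theorem~\ref{thm:subassemb} and Theorem~\ref{thm:SEp}, not here.) Finally, one small presentational difference: rather than defining $\W(\C)$ directly as a $\Gamma$-category, the paper defines $\W$ on a single assembler and builds the $\Gamma$-space as $\ps{n}\rgoesto|N\W(\ps{n}\smash\C)|$, with the Segal condition supplied by Proposition~\ref{prop:Wprop}(3), the identification $\W(\bigvee_x\C_x)\cong\bigoplus_x\W(\C_x)$.
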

The higher homotopy groups of assemblers encode further geometric information
about the geometry of cutting and pasting.  They are not generally trivial; for
example, for the assembler whose underlying category is $\initial \rto *$ and
which has the trivial topology, the $K$-theory has the homotopy type of the
sphere spectrum.  In a sequel \cite{Z-ass-pi1} we will construct generators for
$\pi_1K(\C)$ and show that these are related to ``self-scissors-congruences'' of
objects.

In this paper, however, we concern ourselves with more structural questions.  It
turns out that assemblers fall into a sweet spot in the definition of algebraic
$K$-theory similar to the sweet spot found by Quillen \cite{quillen73}.  When
Waldhausen \cite{waldhausen83} developed a framework for the algebraic
$K$-theory of spaces he had to discard many of the advantages of Quillen's exact
categories; in particular, while he had an analog of Quillen's Localization
Theorem (\cite[Theorem 5]{quillen73}, \cite[Proposition 1.5.5]{waldhausen83}) he
did not have an analog of Quillen's D\'evissage \cite[Theorem 4]{quillen73}.
This makes computations and analysis using Waldhausen's approach much more
difficult than Quillen's.  The approach used in this paper, while much more
analogous to Waldhausen's combinatorial approach than Quillen's algebraic one,
also has both localization and d\'evissage theorems.
\begin{maintheorem}[D\'evissage] \label{thm:subassemb}
  Let $\C$ be an assembler and $\D$ a full subassembler.  If for every object
  $A\in\C$ there exists a finite disjoint covering family $\{D_i \rto A\}_{i\in
    I}$ such that $D_i \in \D$ for all $i\in I$ then the induced map $K(\D) \rto
  K(\C)$ is an equivalence of spectra.
\end{maintheorem}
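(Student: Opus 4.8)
The plan is to prove the equivalence already at the level of the symmetric monoidal category $\W(\C)$ out of which $K(\C)$ is built, by a Quillen Theorem~A argument whose input is the covering hypothesis. Recall that $K(\C)$ is the spectrum produced from $\W(\C)$ --- finite families of objects of $\C$ under disjoint union, with morphisms the relevant ``partial cover'' maps --- by applying Segal's $\Gamma$-space machine. Because that machine, together with the nerve and realization functors feeding it, preserves finite products and carries weak equivalences to weak equivalences, it is enough to show that the monoidal inclusion $\iota\colon\W(\D)\hookrightarrow\W(\C)$ induces a weak homotopy equivalence of nerves $B\W(\D)\simeq B\W(\C)$; that $K(\iota)$ is then an equivalence of spectra is formal. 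The whole problem is thereby reduced to a statement about the two categories $\W(\D)$ and $\W(\C)$.

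To prove that statement I would apply Quillen's Theorem~A to $\iota$, reducing to showing that for every object $X=(A_i)_{i\in I}$ of $\W(\C)$ the comma category $\iota\downarrow X$ (or $X\downarrow\iota$, according to the variance of the morphisms of $\W$) has weakly contractible nerve. Nonemptiness is exactly the hypothesis: for each $i\in I$ choose a finite disjoint covering family of $A_i$ by objects of $\D$, and assemble these into one object of $\W(\D)$ mapping to $X$ via the chosen covering maps. For contractibility I would show that this comma category is cofiltered --- so in particular its nerve is weakly contractible --- the decisive point being that any two $\D$-refinements of $X$ admit a common refinement: pull the covering family underlying one of them back along the maps of the other and compose, using that in a Grothendieck topology pullbacks of covering families are covering and that covers compose. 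Modulo the issue in the next paragraph, this is what makes Theorem~A, and hence the comparison, go through.

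The step I expect to be the main obstacle is ensuring the common refinement stays inside $\D$: pulling a covering family $\{D'\to A\}$ with $D'\in\D$ back along a map $D\to A$ with $D\in\D$ produces objects $D\times_A D'$ that a priori lie only in $\C$, so one must read off from the definition of subassembler exactly which closure properties are available --- closure under the monic morphisms of the site (equivalently, under subobjects) and under passing to refinements of covering families --- and check that they suffice. If the literal pullback escapes $\D$, the remedy is to re-refine each $D\times_A D'$ by a $\D$-covering family, again via the hypothesis, before composing; this is harmless, since a refinement of a refinement of $X$ is still a refinement of $X$. The remaining effort is bookkeeping: tracking the partial functions and permutations inside the morphisms of $\W(\C)$ so that the common refinement is genuinely a morphism in the comma category, and confirming that Theorem~A has been applied correctly. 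As a consistency check one can recover the $\pi_0$ part of Theorem~\ref{thm:subassemb} directly: surjectivity of $\pi_0 K(\D)\to\pi_0 K(\C)$ is immediate from the covering relation of Theorem~\ref{thm:K0}, and injectivity follows once the comma categories above are known to be connected.
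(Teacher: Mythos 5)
Your overall strategy matches the paper's: reduce to showing $N\W(\D)\to N\W(\C)$ is a weak equivalence, and prove that via Quillen's Theorem~A by exhibiting the comma categories as cofiltered (and observing they are preorders since all morphisms in $\W(\C)$ are monic, so cofilteredness suffices via the paper's Lemma~\ref{lem:cofiltpre}). Two points where your sketch and the paper's execution diverge in substance are worth spelling out.

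First, on the reduction step: you invoke ``Segal's $\Gamma$-space machine preserves products and weak equivalences,'' but the spectrum $K(\C)$ here is not literally produced by a $\Gamma$-space machine from $\W(\C)$; it is defined directly as a symmetric spectrum with $k$-th space $|\W(S^k\smash\C)|$. The paper's Lemma~\ref{lem:inductive-step} does the actual work: it uses $\W(S^k_n\smash\C)\simeq\W(\C)^{n^k}$ (Proposition~\ref{prop:Wprop}(3), the $\vee$-to-$\oplus$ splitting) to propagate a nerve equivalence at level $0$ to every spectrum level. Your intuition for why this should be formal is correct, but the precise statement needed is this lemma, and you should cite it rather than appeal to a $\Gamma$-space black box that isn't quite the one in play.

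Second, on cofilteredness: you propose pulling one $\D$-refinement back along the other and re-refining. Two cautions. (i) Assemblers are not required to have pullbacks; axiom~(R) gives only that a cospan $A\to Y\leftarrow B$ in $\W(\C)$ completes to a commutative square (Proposition~\ref{prop:Wprop}(2)), not that an actual pullback of covering families exists. The paper's proof uses this square completion rather than a pullback, and this is necessary in the general case. (ii) You correctly sense that the composite $C'\to C\to A$ landing in $\D$ is the delicate point and you propose re-refining, but the resolution is not a subtle ``closure under monics/subobjects'' property to be extracted from the subassembler axioms: it is simply that $\D$ is a \emph{full} subcategory of $\C$, hence $\W(\D)$ is full in $\W(\C)$, so once $C'$, $A$, $B$ all lie in $\W(\D)$ the connecting morphisms automatically do too. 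The paper states and uses this fullness explicitly; your write-up treats it as an open concern to be patched, which makes the argument look more fragile than it is. With these two repairs your proof and the paper's coincide.
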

A subcategory $\D$ of an assembler $\C$ is called a \textsl{subassembler} if it
is an assembler and the inclusion $\D \rto \C$ is a morphism of assemblers.
This theorem is proved in Section~\ref{sec:subassemb}.  For examples of
applications of this theorem, see Sections~\ref{sec:gr} and \ref{sec:polytope}.

We also have a localization theorem; unfortunately, the most general form of the
theorem requires passing to simplicial assemblers.  We define a simplicial
assembler to be a functor $\Delta^{op} \rto \ASB$; for a simplicial assembler
$\C_\dot$ we define
\[K(\C_\dot) = \hocolim_{[n]\in \Delta^{op}} K(\C_n).\] Any assembler $\C$ can
be considered a simplicial assembler $\C_\dot$ by taking the constant functor at
$\C$.  It turns out that inside the category of simplicial assemblers it is
always possible to construct a nice model for the cofiber on $K$-theory of a
morphism of assemblers.  
\begin{maintheorem}[Localization] \label{thm:cofiber}
  Let $g:\D_\dot\rto \C_\dot$ be a morphism of
  simplicial assemblers.  There exists a simplicial assembler $(\C/g)_\dot$ with
  a morphism of simplicial assemblers $\iota:\C_\dot \rto (\C/g)_\dot$ such
  that the sequence
  \[K(\D_\dot) \rto^{K(g)} K(\C_\dot) \rto^{K(\iota)} K((\C/g)_\dot)\] is a
  cofiber sequence.
\end{maintheorem}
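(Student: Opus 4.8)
The plan is to build $(\C/g)_\dot$ so that, levelwise or after suitable manipulation, the cofiber sequence becomes a consequence of an additivity/fibration theorem for the $K$-theory of assemblers, modeled on Waldhausen's construction of the relative $S_\bullet$-construction. The natural candidate is to define $(\C/g)_n$ to be the assembler obtained from $\C_n$ by formally ``killing'' the image of $\D_n$: one adjoins to the Grothendieck site of $\C_n$ the data that makes every object in the image of $g$ into (a summand of) the initial object, i.e. one forces the covering family of $W(g)(D) \rto W(g)(D)$ by the empty family, or equivalently passes to the quotient site where the image of $\D_n$ is declared to be covered by nothing. Because assemblers are combinatorial objects (small sites with monic maps and a disjointness condition), this quotient can be performed explicitly; the functor $\iota:\C_n\rto(\C/g)_n$ is the evident one, and naturality in $[n]\in\Delta^{op}$ gives the morphism of simplicial assemblers $\iota$.

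First I would verify that $(\C/g)_\dot$ as defined is genuinely a simplicial assembler and that $\iota$ and $K(\iota)\circ K(g)$ behave correctly: the composite $\D_\dot\rto\C_\dot\rto(\C/g)_\dot$ should factor through an assembler whose $K$-theory is contractible, since in $(\C/g)_n$ every object coming from $\D_n$ is equivalent to the initial object and hence $0$ in $K$-theory. This gives the null-homotopy of $K(\iota)\circ K(g)$, so we do get a map from the cofiber of $K(g)$ to $K((\C/g)_\dot)$. Next I would set up the comparison: following Waldhausen's fibration theorem, one introduces the simplicial assembler of ``$g$-trivial'' filtered objects — morally, $S_\bullet$ of pairs $(X, \text{decomposition of }X\text{ modulo the image of }g)$ — and shows (i) that its $K$-theory is equivalent to $K((\C/g)_\dot)$ via an approximation/cofinality argument exploiting the devissage-type hypothesis that is built into the quotient construction, and (ii) that the sequence obtained by including $K(\D_\dot)$ as the fiber is a fibration sequence by an Additivity Theorem for assemblers (which must either be cited from an earlier section of the paper or proved here via the usual $S_\bullet S_\bullet$ argument). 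Passing to the homotopy colimit over $\Delta^{op}$ is harmless since homotopy colimits of spectra preserve cofiber sequences, which is why one is forced into the simplicial setting in the first place.

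The main obstacle I anticipate is step (i): proving that the explicitly-constructed quotient assembler $(\C/g)_\dot$ actually computes the cofiber, rather than merely receiving a map from it. Concretely this means showing that a certain comparison map — from the ``relative'' $S_\bullet$-construction (objects of $\C$ together with a chosen way of trivializing them relative to $\D$) to $(\C/g)_\dot$ — induces an equivalence on $K$-theory. In Waldhausen's setting this is where one uses the cylinder functor and the saturation/extension axioms; for assemblers the analogous input is precisely the flexibility of the site structure and a d\'evissage-style argument (Theorem~\ref{thm:subassemb}) applied fibrewise, showing that the covering families in the quotient site are ``generated'' by those in $\C$ in a way that makes the comparison an equivalence. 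Getting the simplicial bookkeeping right — ensuring the quotient is taken compatibly in the simplicial direction, and that the $\hocolim$ commutes with all the constructions — is the other place where care is needed, but it should be routine once the levelwise statement is in hand.
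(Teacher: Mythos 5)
Your proposal takes a fundamentally different route from the paper, and the route has a gap that cannot be closed in the generality claimed.

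You propose to define $(\C/g)_n$ by ``killing'' the image of $\D_n$ in $\C_n$ --- i.e., by passing to a quotient site where objects in the image of $g$ become covered by the empty family, hence effectively initial. This quotient-style construction is essentially the $\C\bs\D$ construction of Definition~\ref{def:CbsD}, and the question of when $K(\D)\rto K(\C)\rto K(\C\bs\D)$ is a cofiber sequence is precisely the content of Theorem~\ref{thm:cofibercalc}, which requires two additional hypotheses: that $\D$ is a sieve in $\C$, and that $\C$ has complements for all objects of $\D$. Those hypotheses are not available for an arbitrary morphism $g:\D_\dot\rto\C_\dot$, and the long proof of Theorem~\ref{thm:cofibercalc} in Section~\ref{sec:proof2} shows exactly where one needs them (in particular, the ``has complements'' condition is used to prove $\iota:\W(\C,\D)\rto\W(\C\bs\D)$ is an equivalence). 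You correctly anticipate this as ``the main obstacle,'' but the d\'evissage-plus-approximation argument you sketch to resolve it is not available in general --- the whole point of having both Theorems~\ref{thm:cofiber} and~\ref{thm:cofibercalc} in the paper is that the quotient-site description of the cofiber \emph{only} works under those extra hypotheses.

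The paper's construction is different and designed to avoid this issue entirely. It is a mapping cone, not a quotient:
\[(\C/g)_n = \C_n \vee \big((S^1)_n\smash\D_n\big),\]
with $d_0$ built from $g$. This is a bar-construction-style object --- the cofiber is built freely rather than by imposing relations --- so no approximation or d\'evissage step is needed. The proof then goes by a levelwise fiber-sequence argument: using the additivity $\W(\C\vee\D)\simeq\W(\C)\times\W(\D)$ (Proposition~\ref{prop:Wprop}(3)) together with a trisimplicial analog of Waldhausen's Lemma 5.2, one shows that for $k\geq 1$
\[K(\C_\dot)_k\rto K((\C/g)_\dot)_k\rto K(S^1\smash\D_\dot)_k\]
is a homotopy fiber sequence, and then compares with $(\D/1)_\dot$, whose $K$-theory is a contractible shift, to conclude. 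This also explains why the theorem must be stated for simplicial assemblers: even for constant $\C$ and $\D$, the cone $(\C/g)_\dot$ is a genuinely non-constant simplicial assembler because of the $S^1$-smash. That is the actual reason one is ``forced into the simplicial setting,'' not the compatibility of $\hocolim$ with cofiber sequences.

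In short: the idea of ``formally killing $\D$'' is the wrong starting point here; what you are describing is Theorem~\ref{thm:cofibercalc}, which needs hypotheses you do not have. The correct move is to build the cone explicitly as a simplicial assembler.
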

This is proved in Section~\ref{sec:cofiber}.  This theorem gives a useful formal
definition, but suffers from the same problem as many bar constructions: the
homotopy type of a simplicial object can be difficult to identify.  Even when
$\C_\dot$ and $\D_\dot$ are constant simplicial assemblers, $(\C/g)_\dot$ will
not be; this generally makes using Theorem~\ref{thm:cofiber} difficult.
However, in Section~\ref{sec:proof2} we show that for certain types of morphisms
of assemblers the cofiber turns out to be surprisingly simple.  Let $\C\bs \D$
be the full subcategory of $\C$ containing the objects
$((\ob \C) \bs (\ob \D)) \cup \{\initial\}$; for its assembler structure, see
Definition~\ref{def:CbsD} For an object $A$ of $\C$, we say that $\C$
\textsl{has complements for $A$} if any morphism $A \rto B$ is in a finite
disjoint covering family of $B$.
\begin{maintheorem} \label{thm:cofibercalc}  Let $\D$ be a subassembler of $\C$
  such that $\D$ is a sieve in $\C$ and such that $\C$ has complements for all
  objects of $\D$.  Then
  \[K(\D) \rto K(\C) \rto K(\C\bs\D)\]
  is a cofiber sequence.
\end{maintheorem}

As an application of this theory, in Section~\ref{sec:polytope} we construct a
spectral sequence relating the classical scissors congruence groups to
Goodwillie's total scissors congruence groups \cite{goodwillie14} and McMullen's
polytope algebra \cite{mcmullen89}.  The differentials in this spectral sequence
measure the difference between these groups; in particular, Goodwillie's groups
split as sums of classical groups if and only if certain differentials in the
spectral sequences are zero.
\begin{maintheorem} \label{thm:scissors} Let $\G_n$ be the assembler whose objects
  are polytopes of dimension $n$ in $E^n$, and let $\gG_n$ be the assembler
  whose objects are polytopes of dimension at most $n$; then the $\pi_0K(\G_n)$
  are the scissors congruence groups of Dupont and Sah and the $\pi_0K(\gG_n)$
  are the scissors congruence groups of Goodwillie.  There exists a spectral
  sequence
  \[E^1_{p,q} = \pi_p K(\G_q) \Rto \pi_p(\gG_n) \qquad\hbox{for }q \leq n.\]
\end{maintheorem}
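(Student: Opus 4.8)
The plan is to obtain the spectral sequence as the one associated to the filtration of $\gG_n$ by dimension, with Theorem~\ref{thm:cofibercalc} identifying the successive quotients with the assemblers $\G_q$; the two statements about $\pi_0$ will then drop out of Theorem~\ref{thm:K0}.

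I would first record the $\pi_0$ computations. In $\G_n$ the objects are the $n$-dimensional polytopes in $E^n$, the isomorphisms are the Euclidean isometries, and the finite disjoint covering families are the subdivisions of an $n$-polytope into $n$-polytopes meeting only along lower-dimensional faces; so Theorem~\ref{thm:K0} presents $\pi_0K(\G_n)$ as the free abelian group on $n$-polytopes modulo the relations $[P]=[gP]$ and $[P]=\sum_i[P_i]$, which is exactly the Dupont--Sah scissors congruence group $\P(E^n)$. The same analysis for $\gG_n$, which in addition allows polytopes of dimension $<n$, reproduces Goodwillie's total scissors congruence group.

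Next I would set up the filtration. For $-1\le q\le n$ let $\gG_q\subseteq\gG_n$ be the full subassembler on polytopes of dimension at most $q$, so that $\gG_{-1}$ contains only $\initial$ (whence $K(\gG_{-1})\simeq *$) and $\gG_n$ is all of $\gG_n$. Each $\gG_{q-1}$ is a subassembler of $\gG_q$, and it is a sieve because a subobject of a polytope of dimension $\le q-1$ again has dimension $\le q-1$. Moreover $\gG_q$ has complements for every object of $\gG_{q-1}$: given a morphism $A\rto B$ in $\gG_q$ with $\dim A\le q-1$, one subdivides the complement $B\bs A$ into finitely many half-open polytopes, which together with $A$ constitute a finite disjoint covering family of $B$. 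Theorem~\ref{thm:cofibercalc} therefore applies with $\C=\gG_q$ and $\D=\gG_{q-1}$, yielding a cofiber sequence $K(\gG_{q-1})\rto K(\gG_q)\rto K(\gG_q\bs\gG_{q-1})$.

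It remains to identify the quotient and assemble the spectral sequence. By the construction in Theorem~\ref{thm:cofibercalc} the objects of $\gG_q\bs\gG_{q-1}$ are $\initial$ together with the polytopes of dimension exactly $q$ in $E^n$, with covering families the subdivisions into $q$-polytopes; choosing a $q$-plane $E^q\subseteq E^n$, using that every $q$-polytope of $E^n$ is isometric to one lying in $E^q$, and noting that the setwise stabilizer of $E^q$ in $\mathrm{Isom}(E^n)$ surjects onto $\mathrm{Isom}(E^q)$ and acts on polytopes in $E^q$ through that quotient, one obtains an equivalence of assemblers $\gG_q\bs\gG_{q-1}\simeq\G_q$, hence $\pi_*K(\gG_q\bs\gG_{q-1})\cong\pi_*K(\G_q)$. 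Thus $*=K(\gG_{-1})\rto K(\gG_0)\rto\cdots\rto K(\gG_n)$ is a finite filtration of $K(\gG_n)$ with $q$-th associated graded spectrum $K(\G_q)$, and the spectral sequence of this filtered spectrum has $E^1_{p,q}=\pi_pK(\G_q)$, differentials $d^r\colon E^r_{p,q}\to E^r_{p-1,q-r}$, and converges (the filtration being finite) to $\pi_pK(\gG_n)$, which is the claimed spectral sequence. The only genuinely geometric input --- and the step I expect to be the main obstacle --- is the verification that $\gG_q$ has complements for the objects of $\gG_{q-1}$, i.e.\ that any lower-dimensional subpolytope extends to a finite disjoint subdivision; everything else is formal once the assemblers of Section~\ref{sec:polytope} are in hand.
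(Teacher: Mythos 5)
Your filtration setup is exactly right — the sieve and complements conditions for Theorem~\ref{thm:cofibercalc} hold for the reasons you give, and the $\pi_0$ identification via Theorem~\ref{thm:K0} is correct. The overall shape of the argument (filter $\gG_n$ by dimension, apply the cofiber theorem, identify the graded pieces, read off the spectral sequence) also matches the paper. The problem is in the identification of the quotient.

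You claim ``an equivalence of assemblers $\gG_q\bs\gG_{q-1}\simeq\G_q$,'' with a proof sketch that only addresses the ambient space and the isometry group, but this equivalence does not hold at the level of objects. The objects of $\gG_q\bs\gG_{q-1}$ are the McMullen--Goodwillie polytopes of top dimension exactly $q$: finite unions of relatively open simplices of dimension $\leq q$ with at least one $q$-simplex. These include half-open polytopes, open polytopes, and polytopes with extraneous lower-dimensional ``flaps,'' not just the closed $q$-polytopes that make up $\G_q$. No isometry takes a half-open segment to a closed one, so the two categories are not equivalent, and the ``choice of plane'' argument cannot repair this. You also implicitly need the map $\G_q \rto \gG_q$ (the naive inclusion) to be a morphism of assemblers, but the paper explicitly observes that it is not: two closed polytopes meeting along a common face are disjoint in $\G_q$ but have nonempty pullback in $\gG_q$.

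The paper fixes both problems at once with an extra idea you are missing: define $F':\G_q \rto \gG_q$ by sending a closed polytope to its relative interior, $F'(U,P) = (U,\mathring P)$. This restores disjointness, and after composing with $\gG_q \rto \gG_q\bs\gG_{q-1}$ one checks that $F$ is a morphism of assemblers (covering families in $\G_q$ map to families that become covers after adjoining boundary pieces, which lie in $\gG_{q-1}$). The resulting $F$ is an inclusion of a \emph{subassembler}, not an equivalence of categories, and the last step is the d\'evissage theorem (Theorem~\ref{thm:subassemb}): every object $(U,P)$ of $\gG_q\bs\gG_{q-1}$ has the singleton covering family $\{(\operatorname{span}\mathring P,\mathring P)\rto (U,P)\}$ with $\mathring P$ in the image of $F$, so $K(F)$ is an equivalence. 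Without this interior trick and the d\'evissage step, the identification of the associated graded does not go through.
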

A discussion of the differentials of this spectral sequence is given in
\cite[Section 5]{Z-ass-pi1}.

The theory developed in this paper will also be used in \cite{Z-ass-var} to prove the
following theorem:
\begin{maintheorem} \label{thm:kernelL}
  Let $K_0[\V_k]$ be the Grothendieck ring of varieties.  Any element $x$ in the
  kernel of multiplication by $[\A^1]$ can be represented as $[X] -[Y]$ where
  $X$ and $Y$ are varieties such that $[X\times \A^1] = [Y\times \A^1]$ but
  $X\times \A^1$ and $Y\times \A^1$ are not piecewise isomorphic.
\end{maintheorem}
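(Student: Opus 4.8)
The plan is to realize multiplication by $[\A^1]$ as a morphism of assemblers and run the Localization Theorem. Write $L\colon \V_k \rto \V_k$ for the operation $X \mapsto X\times\A^1$ on the assembler $\V_k$ of $k$-varieties; one checks directly that $L$ is a morphism of assemblers (it fixes the initial object, is faithful and so preserves monomorphisms, sends a finite disjoint covering family $\{A_i \rto A\}$ to the finite disjoint covering family $\{A_i\times\A^1 \rto A\times\A^1\}$, and carries a disjoint pair of morphisms to a disjoint pair, since $(A\times_C B)\times\A^1$ is initial whenever $A\times_C B$ is). By Theorem~\ref{thm:K0}, $\pi_0 K(\V_k) = K_0[\V_k]$, and $K(L)$ induces multiplication by $[\A^1]$ on it. Regarding $\V_k$ as a constant simplicial assembler and applying Theorem~\ref{thm:cofiber} to $L$, we obtain a simplicial assembler $(\V_k/L)_\dot$, a structure map $\iota\colon\V_k \rto (\V_k/L)_\dot$, and a cofiber sequence
\[
  K(\V_k) \rto^{K(L)} K(\V_k) \rto^{K(\iota)} K((\V_k/L)_\dot).
\]
Its long exact sequence of homotopy groups identifies $\ker\bigl(\,\cdot[\A^1]\colon K_0[\V_k] \rto K_0[\V_k]\,\bigr)$ with the image of the connecting map $\partial\colon \pi_1 K((\V_k/L)_\dot) \rto K_0[\V_k]$, and shows that $\ker\partial$ is exactly the image of $K(\iota)_*\colon \pi_1 K(\V_k) \rto \pi_1 K((\V_k/L)_\dot)$.

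Surjectivity of $\partial$ is not by itself enough, because \emph{any} representative $x = [X]-[Y]$ of a kernel element already has $[X\times\A^1] = [Y\times\A^1]$; and since a finite disjoint covering family exhibits $A$ as a literal disjoint union of its members up to piecewise isomorphism — so $\pi_0 K(\V_k)$ is the group completion of the monoid of piecewise-isomorphism classes of varieties under disjoint union — this equality only gives a piecewise isomorphism $X\times\A^1 \sqcup Z \cong Y\times\A^1 \sqcup Z$ after adjoining an auxiliary variety $Z$. The content of the theorem is that $Z$ can be made essential. For that I would actually compute $\pi_1 K((\V_k/L)_\dot)$, using two ingredients. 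First, the dimension filtration $\V_k^{\le 0} \subseteq \V_k^{\le 1} \subseteq \cdots$: each inclusion $\V_k^{\le n-1}\hookrightarrow \V_k^{\le n}$ exhibits the former as a sieve in the latter, and $\V_k^{\le n}$ has complements for every object of $\V_k^{\le n-1}$ (the complement of a locally closed subvariety is a finite disjoint union of locally closed subvarieties), so Theorem~\ref{thm:cofibercalc} breaks the problem into the pure-dimensional graded pieces $K(\V_k^{\le n}\bs\V_k^{\le n-1})$, on which $L$ acts by a dimension shift and which D\'evissage (Theorem~\ref{thm:subassemb}) may further simplify. Second, the description of $\pi_1 K(\C)$ from \cite{Z-ass-pi1} in terms of self-scissors-congruences, i.e.\ pairs of finite families of objects with two distinct piecewise matchings.

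Putting these together, a generator of $\pi_1 K((\V_k/L)_\dot)$ should unwind to finite families of varieties $\{X_i\}_{i\in I}$ and $\{Y_j\}_{j\in J}$ together with two piecewise isomorphisms $f,g\colon \coprod_i (X_i\times\A^1) \rto \coprod_j (Y_j\times\A^1)$ that differ, the discrepancy $gf^{-1}$ being recorded in the cofiber only modulo piecewise automorphisms that are products with $\A^1$; and $\partial$ should send such a generator to $\sum_i [X_i] - \sum_j [Y_j]$. Taking $X := \coprod_i X_i$ and $Y := \coprod_j Y_j$ then yields a representation $x = [X]-[Y]$ equipped with a distinguished piecewise isomorphism $X\times\A^1 \cong Y\times\A^1$ that carries the twist $gf^{-1}$. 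The crucial claim is that when $x\ne 0$ this twist cannot be removed: an honest piecewise isomorphism $X\times\A^1 \cong Y\times\A^1$ compatible with the recorded data would place the generator $\xi$ in the image of $K(\iota)_*$, which is $\ker\partial$, contradicting $\partial(\xi) = x\ne 0$. Hence for every nonzero kernel element the representation extracted from any of its lifts has $X\times\A^1 \not\cong Y\times\A^1$. (We take $x\ne 0$ throughout; the zero element carries no content here.)

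The main obstacle is precisely this last claim, into which two difficulties feed. The technical one is that $(\V_k/L)_\dot$ is genuinely simplicial even though $\V_k$ is constant, so $\pi_1$ of the cofiber interleaves the simplicial direction with the self-congruence generators of \cite{Z-ass-pi1}; making $\partial$ explicit enough to read off $X$, $Y$, and the twist requires matching the bar-type model of Section~\ref{sec:cofiber} against those generators with some care, and although the dimension-filtration reduction is the natural way to keep this manageable, one still has to verify the sieve-and-complements hypotheses on the graded pieces and thread $L$'s dimension shift through the filtration. The deeper difficulty is that the non-triviality of $\ker(\,\cdot[\A^1]\,)$ is not a formal fact, so the argument must transport it through the localization sequence while retaining the \emph{geometric}, unstable information and not merely the class in $K_0$: concretely, one has to show that a piecewise isomorphism of $X\times\A^1$ with $Y\times\A^1$ would descend past the recorded twist, and this is where the genuine geometry of products with $\A^1$ — and the failure of cancellation for piecewise isomorphism — must enter.
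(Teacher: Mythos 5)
First, to orient you: the paper does not prove Theorem~\ref{thm:kernelL}. It only states it, explicitly noting that ``the theory developed in this paper will also be used in \cite{Z-ass-var} to prove the following theorem.'' The one hint the paper gives about that proof is the spectral sequence
\[
  E^1_{p,q} = \bigoplus_{[X]\in B_q}\pi^s_p B\Aut\,k(X) \Rto K_p(\V_k)
\]
arising from the dimension filtration $\V_k^{(0)}\subseteq\V_k^{(1)}\subseteq\cdots$, together with the identification (from \cite[Theorem A]{Z-ass-var}, which uses Theorems~\ref{thm:subassemb} and~\ref{thm:cofibercalc} and the analysis of $\C_X$ and $\S_G$ in Section~\ref{sec:gr}) of each graded cofiber as $\bigvee_{[X]\in B_n}\Sigma^\infty_+B\Aut\,k(X)$. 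So there is no in-paper proof to match yours against; what can be compared is strategy, and yours is genuinely different.

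Your setup is sound as far as it goes: $L\colon X\mapsto X\times\A^1$ is a morphism of assemblers (it preserves $\initial$, takes the generating coverage to a coverage, and preserves disjointness because $(A\times_C B)\times\A^1=\initial$ whenever $A\times_C B$ is), Theorem~\ref{thm:K0} makes $K(L)_*$ multiplication by $[\A^1]$ on $\pi_0$, and Theorem~\ref{thm:cofiber} gives a cofiber sequence whose long exact sequence realizes $\ker(\cdot[\A^1])$ as $\mathrm{im}\bigl(\partial\colon\pi_1 K((\V_k/L)_\dot)\rto K_0[\V_k]\bigr)$. This is an attractive alternative to the dimension filtration --- $\partial$'s image \emph{is} the kernel on the nose, rather than being reached through a spectral sequence --- at the cost of having to understand $\pi_1$ of the genuinely simplicial bar-type object $(\V_k/L)_\dot$.

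The gap is exactly where you flag it, but it is worse than a missing computation: as sketched the mechanism contradicts itself. You describe a lift $\xi$ of $x$ as unwinding to families $\{X_i\}$, $\{Y_j\}$ together with \emph{two piecewise isomorphisms} $f,g\colon\coprod_i X_i\times\A^1\rto\coprod_j Y_j\times\A^1$, set $X=\coprod X_i$, $Y=\coprod Y_j$, and conclude that $X\times\A^1\not\cong Y\times\A^1$ piecewise. But $f$ already \emph{is} a piecewise isomorphism $X\times\A^1\cong Y\times\A^1$, so the $X,Y$ you extract cannot satisfy the theorem's conclusion. The salvage you offer --- that a piecewise isomorphism ``compatible with the recorded data'' would force $\xi\in\mathrm{im}\,K(\iota)_*$ --- would at most show that the twist $gf^{-1}\in\Aut(X\times\A^1)$ is not of the form $\phi\times 1_{\A^1}$ modulo the allowed ambiguity; that is a statement about automorphisms, not about the nonexistence of \emph{any} piecewise isomorphism between $X\times\A^1$ and $Y\times\A^1$. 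Either the heuristic picture of $\pi_1K((\V_k/L)_\dot)$ is wrong (its generators should not come packaged with a chosen piecewise isomorphism, only with the $K_0$-equation $[X\times\A^1]=[Y\times\A^1]$), or the argument needs a separate, genuinely geometric step that produces a representation where the two products fail to be piecewise isomorphic. That step is the actual content of the theorem; it does not fall out of long-exact-sequence bookkeeping, and it is precisely where the dimension-filtration approach goes to work, by analyzing what the $d^1$-differentials valued in abelianized birational automorphism groups can and cannot cancel.
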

Some preliminary work including a computation of a spectral sequence similar to
the one mentioned in Theorem~\ref{thm:scissors} is proved in Section~\ref{sec:gr}.

The theory of assemblers grew out of the theory of polytope complexes, developed
in \cite{zakharevich10, zakharevich11}.  Every polytope complex produces an
assembler (see Example~\ref{ex:poly-cmx}) but not vice versa, and assemblers are
much simpler to work with and much more flexible than polytope complexes.  For
this paper familiarity with polytope complexes is unnecessary, although some of
the approaches used here are similar to those used in \cite{zakharevich11}.

This paper is organized as follows.  In Section~\ref{sec:defs} we define
assemblers and the $K$-theory functor and prove Theorem~\ref{thm:K0}.  In
Section~\ref{sec:examples} we discuss several examples of assemblers and their
$K$-theories.  Section~\ref{sec:equiv} proves Theorem~\ref{thm:subassemb} and a
reduction theorem.  Section~\ref{sec:moreex} discusses two applications of the
theorems from Section~\ref{sec:equiv}, and proves Theorem~\ref{thm:scissors}.
Section~\ref{sec:cofiber} proves Theorem~\ref{thm:cofiber}.  Section
\ref{sec:proof2} proves Theorem~\ref{thm:cofibercalc}.

\subsection{Acknowledgments} I would like to thank Daniel Grayson, Mike Hopkins,
Peter May and Chuck Weibel for many useful and interesting conversations and for
looking at early drafts of this paper.  I would also like to thank the two
anonymous referees whose comments greatly improved this paper.

Funding: This work was supported by the National Science Foundation MSRFP Grant;
the Institute for Advanced Study; and the University of Chicago.

\section{Abstract Scissors Congruence} \label{sec:defs}

In this section we introduce scissors congruence of abstract objects.  We want
to define a relation $A\simeq B$ on certain kinds of objects, where we say that
$A\simeq B$ if $A$ can be ``decomposed'' into ``disjoint'' pieces
$A_1,\ldots,A_n$, and $B$ can be ``decomposed'' into ``disjoint'' pieces
$B_1,\ldots,B_n$ such that $A_i\cong B_i$ (for some definition of $\cong$).  To
be able to define this rigorously we introduce assemblers, which 
categorically codify this information in a natural way.

\subsection{Assemblers}

In this section we define the notion of an ``assembler,'' which is the
fundamental object of study of this paper.  For a discussion of several examples
of assemblers, see Section~\ref{sec:examples}.  

\begin{definition}
  In any category with an initial object $\initial$, we say that two morphisms
  $f:A\rto C$ and $g:B\rto C$ are \textsl{disjoint} if the pullback $A\times_C
  B$ exists and is equal to $\initial$.  A family $\{f_i:A_i \rto A\}_{i\in I}$
  is a \textsl{disjoint family} if for $i\neq i'$ the morphisms $f_i$ and
  $f_{i'}$ are disjoint.
\end{definition}

Although initial objects do not need to be unique, as all constructions in this
paper depend only on the non-initial objects in an assembler we will assume that
initial objects are unique.  

\begin{definition} \label{def:sieve}
  Let $\C$ be any category.  A \textsl{sieve} in $\C$ is a full subcategory $\D$
  such that for all $A$ in $\C$, if there exists a morphism $A\rto B$ in $\C$ with
  $B\in \D$, then $A$ is in $\D$.  In other words, a full subcategory $\D$ is a sieve
  in $\C$ if it is closed under precomposition with morphisms in $\C$.
\end{definition}

Note that any sieve in $\C$ must be equal to its essential image; in other
words, if $\D$ is a sieve in $\C$ and $A \cong A'$ with $A'\in \D$ then $A\in
\D$.  Observe that if $\mathcal{S}$ is a sieve in the over category $\C/C$ and
$f:B \rto C$ is a morphism in $\C$, then the preimage of $\mathcal{S}$ under
$f\circ\cdot:\C/B \rto \C/C$ is also a sieve, generally called $f^*\mathcal{S}$.

\begin{definition}
  A \textsl{Grothendieck topology} on a category $\C$ is a collection $J(C)$ of
  sieves in $\C/C$ for all objects $C\in \C$.  These collections must satisfy
  the following axioms:
  \begin{itemize}
  \item[(T1)] If $\mathcal{S}$ is in $J(C)$ and $f:B \rto C$ is a morphism in
    $\C$ then $f^*\mathcal{S}$ is in $J(B)$.
  \item[(T2)] Let $\mathcal{S}$ be in $J(C)$ and $\mathcal{T}$ be any sieve in
    $\C/C$.  If for every object $f:B \rto C$ in $\mathcal{S}$ the sieve
    $f^*\mathcal{T}$ is in $J(B)$ then $\mathcal{T}$ is in $J(C)$.
  \item[(T3)] $\C/C$ is in $J(C)$ for all $C\in \C$.
  \end{itemize}
  Given a family of morphisms $\{f_i:A_i \rto A\}_{i\in I}$ in $\C$, we say that
  it is a \textsl{covering family} if the full subcategory of $\C/A$ containing
  the objects
  \[\{g:X\rto A\,|\, \exists\ i\in I,\ h:X \rto A_i\hbox{ s.t. } f_ih = g\}\]
  is in $J(A)$.

  A category $\C$ with a Grothendieck topology is called a \textsl{Grothendieck
    site}.
\end{definition}

\begin{definition} \label{def:assembler} An \textsl{assembler} $\C$ is a small
  Grothendieck site satisfying the following extra conditions:
  \begin{itemize}
  \item[(I)] $\C$ has an initial object $\initial$, and the empty family is a
    covering family of $\initial$.
  \item[(R)] For any $A$, any two finite disjoint covering families of $A$ have
    a common refinement which is itself a finite disjoint covering family.
  \item[(M)] All morphisms in $\C$ are monomorphisms.
  \end{itemize}

  A \textsl{large assembler} is a Grothendieck site satisfying axioms (I), (R)
  and (M).  An assembler is said to be \textsl{closed} if it has all pullbacks.
\end{definition}

\begin{remark}
  If a Grothendieck site is closed under pullbacks then (R) always holds.
\end{remark}

The following lemma is direct from the definition of an assembler but is
important enough that we wish to highlight it:

\begin{lemma}
  If $A$ is noninitial in $\C$ then $\C(A,\initial) = \eset$.
\end{lemma}

We can now rephrase the definition of scissors congruence in the following way:

\begin{definition}
  Two objects $A,B$ in an assembler $\C$ are \textsl{scissors congruent},
  written $A\simeq B$, if there exist finite disjoint covering families $\{A_i
  \rto A\}_{i=1}^n$ and $\{B_i \rto B\}_{i=1}^n$ such that $A_i$ is isomorphic
  to $B_i$ for $1 \leq i \leq n$.
\end{definition}

\begin{definition}
  Let $\C$, $\D$ be two assemblers.  A functor $F: \C\rto \D$ is a
  \textsl{morphism of assemblers} if it is continuous (in the sense of
  Grothendieck topologies) and preserves the initial object and disjointness.
  In other words, if $f:A \rto C$ and $g:B\rto C$ are disjoint in $\C$ then
  $F(f)$ and $F(g)$ are disjoint in $\D$.  We denote the category of assemblers
  and morphisms of assemblers by $\ASB$.  The subcategory of closed assemblers
  and pullback-preserving morphisms of assemblers is denoted by $c\ASB$.
\end{definition}

For convenience, we denote the full subcategory of noninitial objects by
$\C^\circ$.  We have the following:

\begin{lemma} \label{lem:cop}
  $\ASB$ and $c\ASB$ have arbitrary products and coproducts.  
\end{lemma}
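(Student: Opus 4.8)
The plan is to construct the products and coproducts explicitly and then verify the axioms. For coproducts, given a family $\{\C_j\}_{j\in J}$ of assemblers, I would take the underlying category of $\coprod_j \C_j$ to be the disjoint union of the categories $\C_j$ with all their initial objects identified to a single object $\initial$; concretely, an object is either $\initial$ or a noninitial object of exactly one $\C_j$, and a morphism between noninitial objects must lie within a single $\C_j$ (morphisms to and from $\initial$ being forced). The topology is generated by declaring a family covering if it is either the empty family covering $\initial$, or (after deleting any copies of $\initial$) a covering family in the unique $\C_j$ containing the codomain. One then checks that pullbacks along these morphisms agree with those in the factors (two maps into a noninitial object of $\C_j$ have the same pullback as in $\C_j$; a map into $\initial$ is disjoint from everything), so axioms (I), (R), (M) are inherited factorwise. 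The universal property is immediate: a morphism $\coprod_j \C_j \rto \D$ is the same as a compatible family of morphisms $\C_j \rto \D$, since continuity, initial-object preservation, and disjointness-preservation are all checked on objects and covering families, which are distributed among the factors.

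For products, given $\{\C_j\}_{j\in J}$, I would take the underlying category to be the full subcategory of $\prod_j \C_j$ (the ordinary categorical product) on those tuples $(A_j)_{j\in J}$ in which $A_j$ is noninitial for only finitely many $j$ --- including the all-initial tuple, which serves as $\initial$. A family $\{(A^i_j)_j \rto (A_j)_j\}_{i\in I}$ is declared covering when it is finite and, for each $j$, its image $\{A^i_j \rto A_j\}_i$ (discarding duplicates and $\initial$'s) is a covering family in $\C_j$; general covering sieves are those generated by such families. Pullbacks are computed coordinatewise, so disjointness in the product is disjointness in every coordinate, and axioms (I), (R), (M) again follow coordinatewise --- (R) using that each $\C_j$ satisfies (R) and that only finitely many coordinates are nontrivial. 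The projection functors $p_j$ are continuous, preserve $\initial$, and preserve disjointness, and a cone $(F_j:\D\rto\C_j)_j$ assembles to a unique $F:\D\rto\prod_j\C_j$ by $F(D) = (F_j(D))_j$; the finite-support condition holds because for each object $D$ only finitely many... actually this needs a small argument, which is the main obstacle.

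The hard part will be confirming that the product as defined is genuinely closed under the operations the axioms require --- in particular, that an arbitrary (possibly infinite) product of assemblers supports a well-defined morphism $F$ from any assembler $\D$: one must check that for a fixed object $D\in\D$, the tuple $(F_j(D))_j$ has finite support, i.e., $F_j(D) = \initial$ for all but finitely many $j$. This is \emph{not} automatic from the axioms as stated, so the honest statement is that $\ASB$ has all finite products and all (small) coproducts, while infinite products require either restricting to a variant category or adding a hypothesis; alternatively one defines the infinite product via the ``finitely supported tuples'' construction but must then check the factors interact correctly through the generating covering families rather than through a naive product topology. I would therefore organize the proof as: (1) construct finite products and verify axioms and universal property coordinatewise; (2) construct arbitrary coproducts as the wedge-like gluing described above and verify axioms and universal property; (3) treat arbitrary products by the finite-support construction, being careful to generate the topology from finite families and to note where finiteness of support is used. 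The same constructions restrict to $c\ASB$: coproducts and finite products of closed assemblers are closed since pullbacks are computed factorwise/coordinatewise, and the structure maps preserve pullbacks, so the universal properties transfer verbatim.
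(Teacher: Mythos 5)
Your coproduct construction is essentially the paper's: the wedge $\bigvee_{x}\C_x$ with all initial objects identified and morphisms between noninitial objects confined to a single factor, with the universal property a routine check. The gap is in your product.

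The paper takes the underlying category of $\prod_{x\in X}\C_x$ to be the \emph{full} categorical product $\prod_x\C_x$ --- all tuples $(A_x)_x$ with no support restriction --- equipped with the topology in which a family covers iff its projection to every coordinate covers. With that choice a morphism of assemblers $\D\rto\prod_x\C_x$ is nothing more than an $X$-tuple of morphisms $\D\rto\C_x$: given a cone $(F_x)_x$, the assignment $F(D)=(F_x(D))_x$ is an object of the product with no condition to verify; it preserves $\initial$ and disjointness coordinatewise (pullbacks in a product of categories are computed coordinatewise); and it is continuous because each $F_x$ is. The ``finitely supported tuples'' subcategory you propose instead is in effect $\bigoplus_x$ rather than $\prod_x$ --- it is what appears as $\W$ of the \emph{coproduct} in Proposition~\ref{prop:Wprop}(3) --- and forcing it into the role of the product is exactly what generates the difficulty you flag at the end: a cone $(F_x)_x$ has no reason to send an object $D$ to a finitely supported tuple. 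You diagnosed the symptom correctly, but drew the wrong conclusion: the fix is to drop the support restriction, not to weaken the lemma to finite products. Once the full product is used, the universal property is immediate and no hedge is needed; the same reasoning restricts to $c\ASB$, since pullbacks in the product are coordinatewise and so pullback-preservation is inherited from the components.
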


\begin{proof}
  Let $X$ be any set, and $\{\C_x\}_{x\in X}$ an $X$-tuple of assemblers.  We
  write $\bigvee_{x\in X} \C_x$ for the assembler whose class of objects is
  $\{\initial\}\cup \coprod_{x\in X}\ob \C_x^\circ$ and whose morphisms between
  noninitial objects come from the $\C_x$.  A morphism of assemblers
  $\bigvee_{x\in X} \C_x \rto \D$ is then just a morphism $F_x:\C_x\rto \D$ for
  each $x\in X$, so $\bigvee_{x\in X}\C_x$ is the coproduct in $\ASB$.

  We write $\prod_{x\in X} \C_x$ for the assembler whose underlying category is
  $\prod \C_x$ and whose topology is the product topology where a family is a
  covering family exactly when its projection to every coordinate is.  Then a
  morphism of assemblers $\D\rto \prod_{x\in X}\C_x$ is just an $X$-tuple of
  morphisms $\D\rto \C_x$, so this is the categorical product in $\ASB$.

  When restricting to $c\ASB$ the same proof applies.
\end{proof}

We will need one other construction on assemblers.
\begin{definition} \label{def:CbsD} Let $\C$ be an assembler and $\D$ a sieve in
  $\C$.  As a category, we define $\C\bs\D$ to be the full subcategory of $\C$
  containing all objects not in $\D^\circ$.  Then $\C\bs\D$ inherits an
  assembler structure from $\C$, where a family $\{f_i:A_i \rto A\}_{i\in I}$ in
  $\C\bs\D$ is defined to be a covering family if there exists a family of
  morphisms $\{f_j: A_j \rto A\}_{j\in J}$ such that each $A_j$ is in $\D$ for
  all $j\in J$ and such that $\{f_i:A_i \rto A\}_{i\in I\cup J}$ is a covering
  family in $\C$.
\end{definition}
In other words, a family in $\C\bs\D$ is a covering family if it can be
completed to a covering family in $\C$ by morphisms whose domains are in $\D$.
There is a natural morphism of assemblers
\[c:\C \rto \C \bs \D\]
sending each object in $(\ob \C)\bs (\ob \D)$ to itself and all objects in $\D$
to $\initial$.  A morphism $A \rto B$ with $A\notin \D$ is sent to itself; a
morphism $A \rto B$ with $A\in \D$ but $B\notin \D$ is sent to the unique
morphism $\initial \rto B$.

The fundamental construction allowing us to to scissors congruence with
assemblers is the category $\W(\C)$, which has as its objects formal sums of
objects of $\C$, and as its morphisms ``gluings'' of finite disjoint covering
families.  As the morphisms of this category keep track of the different ways of
``pasting'' objects together, its $K$-theory should contain important
information about scissors congruence classes.

\begin{definition}
  Let $\C$ be an assembler.  We define the category $\W(\C)$ to have objects
  $\{A_i\}_{i\in I}$, where $I$ is a finite set and $A_i$ is a noninitial object
  of $\C$ for all $i\in I$.  A morphism $f:\SCob{a}{i} \rto \SCob{b}{j}$ in
  $\W(\C)$ is a map of sets $f:I \rto J$ together with a tuple of morphism
  $f_i:A_i \rto B_{f(i)}$ for all $i\in I$ such that for all $j\in J$ the family
  $\{f_i:A_i \rto B_j\}_{i\in f^{-1}(j)}$ is a finite disjoint covering family.
  Note that $\W$ is a functor $\ASB\rto \Cat$.
\end{definition}

\begin{proposition} \label{prop:Wprop}
  Let $\C$ be an assembler.  
  \begin{itemize}
  \item[(1)] All morphisms in $\W(\C)$ are monomorphisms.
  \item[(2)] Any diagram $A \rto C \lto B$ in $\W(\C)$ can be completed to a
    commutative square.  If $\C$ is closed then $\W(\C)$ has all pullbacks.
  \item[(3)] For any family of assemblers $\{\C_x\}_{x\in X}$, let $\bigoplus
    \W(\C_x)$ be the full subcategory of $\prod \W(\C_x)$ where all but finitely
    many of the objects are the object indexed by the empty set.  The functor
    \[P:\W(\bigvee_{x\in X} \C_x) \rto \prod_{x\in X} \W(\C_x)\] induced by the
    morphisms of assemblers $F_x: \bigvee_{x\in X} \C_x \rto \C_x$ which sends
    all $\C_y$ for $y\neq x$ to the initial object, and sends $\C_x$ to itself
    via the identity induces an equivalence of categories
    \[\W(\bigvee_{x\in X}\C_x) \rto \bigoplus_{x\in X}\W(\C_x).\]
  \end{itemize}
\end{proposition}

\begin{proof}
  We prove these in turn.
  \noindent
  Proof of (1): Let $f:\SCob{A}{i} \rto \SCob{B}{j}$ be a morphism of $\W(\C)$,
  and consider two morphisms $g,h: \SCob{C}{k} \rto \SCob{A}{i}$ such that $fg =
  fh$; we show that $g=h$.  Consider any $k\in K$.  We have $fg(k) = fh(k)$,
  which implies that the square
  \begin{diagram}[3em]
    { C_k & A_{g(k)} \\ A_{h(k)} & B_{fg(k)} \\};
    \arrowsquare{g_k}{h_k}{f_{g(k)}}{f_{h(k)}}
  \end{diagram}
  commutes.  If $g(k) \neq h(k)$ then $f_{g(k)}$ and $f_{h(k)}$ are disjoint,
  and thus $C_k = \initial$, a contradiction.  Thus $g(k) = h(k)$.  But as each
  morphism in $\C$ is a monomorphism, this means that we must have $g_k = h_k$
  as well.

  \noindent
  Proof of (2):
  This follows directly from axiom (R); the second part follows from the
  definition of $\W$ if $\C$ has pullbacks.

  \noindent
  Proof of (3): As each object of $\W(\bigvee_{x\in X}\C_x)$ is indexed by a
  finite set, $P$ is actually a functor $\W(\bigvee_{x\in X} \C_x) \rto
  \bigoplus_{x\in X}\W(\C_x)$.  To see that $P$ is an equivalence, note that it
  is full and faithful and hits all objects indexed by disjoint indexing sets.
  Essential surjectivity follows because for any finite tuple of finite sets
  there exists an isomorphic one where the sets are disjoint.
\end{proof}

\subsection{The $K$-theory of an assembler} \label{sec:KC}

We are now ready to define the $K$-theory of an assembler.  

Recall that the topological analog of the tensor product is the smash product
$\wedge$, defined for pointed simplicial sets $X$ and $Y$ by
\[(X\wedge Y)_n \stackrel{\text{def}}{=} X_n\times Y_n / \left((X_n\times \{*\})
  \cup (\{*\}\times Y_n)\right).\] Let $S^1$ be the pointed simplicial set
$\Delta^1/(\partial \Delta^1)$, whose set of $n$-simplices is
$\{*,1,\ldots,n\}$.  Let $S^k = (S^1)^{\wedge k}$.  We have an action of
$\Sigma_k$ on $S^k$ which permutes the $S^1$ factors.

We write $N:\Cat \rto s\Sets$ for the nerve of a category.

\begin{definition} \label{def:redK} For a pointed set $X$ and an assembler $\C$,
  write $X\wedge \C$ for the assembler $\bigvee_{x\in X\bs\{*\}} \C$.  This
  gives a tensoring of $\ASB$ over $\FinSet_*$ (for the definition of a
  tensoring, see for example \cite[Section 3.7]{kelly82}).  For any pointed set
  $X$ write $X^\circ = X \bs \{*\}$.  Then we have an induced map \[X\wedge
  N\W(\C) \rto N\W(X \wedge \C)\] given by
  \[X\wedge N\W(\C) \cong \bigvee_{X^\circ} N\W(\C) \rto 
  N\bigg(\bigoplus_{X^\circ} \W(\C)\bigg) \cong N\W(X\wedge \C).\]
  This map is natural in $X$.

  The spectrum $K(\C)$ is defined to be the symmetric spectrum of simplicial
  sets where the $k$-th space is given by the diagonal of the bisimplicial set
  \[[n] \rgoesto N\W((S^k)_n \wedge \C),\] with the $\Sigma_k$-action
  induced from the action on $S^k$.  Considering $S^1$ to be a bisimplicial set
  constant in one direction, we have a map of simplicial sets
  \begin{eqnarray*}
    && \varphi_n:(S^1 \wedge N\W(S^k \wedge \C))_n \cong (S^1)_n \wedge
    N\W((S^k)_n\wedge \C) \\ &&\qquad \rto N\W((S^1)_n \wedge (S^k)_n  \wedge \C) \cong
    N\W((S^{k+1})_n \wedge \C).
  \end{eqnarray*}
  The spectral structure map $S^1\wedge N\W(S^k\wedge \C) \rto
  N\W(S^{k+1}\wedge \C)$ is the diagonal of the map of
  bisimplicial sets $\varphi$.

  We write $K_i(\C)$ for $\pi_i K(\C)$.
\end{definition}

Theorem~\ref{thm:K0} now follows from the definition of $K$ and the following
theorem: 

\begin{theorem}
  For any assembler $\C$, $\pi_0K(\C)$ is the free abelian group generated by
  objects of $\C$ modulo the relations
  \[[A] = \sum_{i\in I} [A_i] \qquad \text{for any finite disjoint covering
    family } \{\makeshort{A_i \rto A}\}_{i\in I}.\]
\end{theorem}

\begin{proof}
  To find generators and relations on $K_0(\C)$ we will use the theory of
  $\Gamma$-spaces; for background on $\Gamma$-spaces, see for example
  \cite{segal74} or \cite{bousfieldfriedlander}.  To every $\Gamma$-space $X$
  there is associated a symmetric spectrum $\mathbf{B}X$, which has
  $(\mathbf{B}X)_n = |X(S^n)|$.  The functor
  $X:\ps{n} \rgoesto |N\W(\ps{n}\wedge \C)|$ is a special $\Gamma$-space by
  Proposition~\ref{prop:Wprop}(3), and the spectrum $\mathbf{B}X$ is exactly
  equal to $K(\C)$.  Thus to find $\pi_0K(\C)$ it suffices to find
  $\pi_0\mathbf{B}X$.

  Since $X$ is special, $\pi_0X(\ps{1})$ is a monoid with operation induced by
  \[\pi_0X(\ps 1)\times \pi_0X(\ps{1}) \cong \makeshort[4em]{\pi_0X(\ps{2}) \rto^{X(\ps{2}\rightarrow\ps{1})}
    \pi_0X(\ps{1})}.\]
  Here, the first bijection is induced by the functor $P$ from
  Proposition~\ref{prop:Wprop}(3).  Rewriting this in terms of $\W$ it is simply
  the functor
  \[\W(\C)\oplus \W(\C) \rto^{P^{-1}} \W(\C\vee\C) \rto^\mu \W(\C),\]
  where $P^{-1}$ is any inverse equivalence to $P$, for example the one taking a
  pair of objects $(\{A_i\}_{i\in I}, \{B_j\}_{j\in J})$ to the object
  $\{C_k\}_{k\in I\sqcup J}$, where $C_k= A_k$ in the left copy of $\C$ if $k\in
  I$ and $C_k = B_j$ in the right copy of $\C$ if $k\in J$; the functor $\mu$ is
  then just the functor induced by the fold map of assemblers.  This operation
  is therefore the operation which sums objects by taking the disjoint unions of
  their indexing sets.  By \cite[Section 4]{segal74} $\pi_0\mathbf{B}X$ is the group
  completion of this monoid, so $\pi_0\mathbf{B}X$ is a quotient of the free abelian
  group generated by the noninitial objects of $\C$.

  The relations on the group are induced by morphisms of $\W(\C)$.  A morphism
  $f:\SCob{A}{i} \rto \SCob{B}{j}$ can be written as $\coprod_{j\in J}
  \big(\{A_i\}_{i\in f^{-1}(j)} \rto \{B_j\}\big)$, which is a $J$-fold formal
  sum of the component morphisms.  Each component is a finite disjoint covering
  family, and gives the relation
  \[[B_j] =\sum_{i\in f^{-1}(j)} [A_i].\]
  In addition, any finite disjoint covering family $\{A_i\rto A\}_{i\in I}$
  gives a morphism $\SCob{A}{i} \rto \{A\}$.  Thus the relations on $\pi_0(X)$
  given by the morphisms exactly correspond to the finite disjoint covering
  families of noninitial objects in $\C$.

  It remains to check that the group in the statement of the theorem gives the
  same group as the description of $\pi_0K(\C)$ above.  The only difference
  between these two descriptions is the presence of the initial object.
  However, the initial object has an empty covering family (which is
  tautologically finite and disjoint), so by the relation in the statement of
  the theorem $[\initial] = 0$.  Thus its presence in the description does not
  affect the group.
\end{proof}

We introduce the notion of a simplicial assembler and its $K$-theory.

\begin{definition} \label{def:simpass} A \textsl{simplicial assembler} is a
  functor $\Delta^\op \rto \ASB$.  A \textsl{morphism of simplicial assemblers}
  is a natural transformation of functors.  We define the $K$-theory spectrum of
  a simplicial assembler $\C_\dot$ by
  \[K(\C_\dot) = \hocolim_{[n]\in\Delta^\op} K(\C_n).\]

  We write $s\ASB$ for the category of simplicial assemblers.
\end{definition}

\begin{remark}
  Recall that homotopy colimits of spectra can be computed levelwise.  A diagram
  $\Delta^\op \rto s\Sets$ is a bisimplicial set; the diagonal of the bisimplicial
  set is a model for the homotopy colimit of the diagram.  We can thus give an
  explicit model for $K(\C_\dot)$ in an analogous way to
  Definition~\ref{def:redK} as follows.

  For any simplicial set $X_\dot$ and any simplicial assembler $\C_\dot$ we
  define a simplicial assembler $X_\dot\wedge \C_\dot$ by $(X\wedge \C)_n = X_n
  \wedge \C_n$.  We define $K(\C_\dot)_k = \big| \W(S^k\wedge \C_\dot)\big|$.
  The spectral structure maps are constructed analogously to those in
  Definition~\ref{def:redK}.  In particular, if we consider an assembler to be a
  constant simplicial assembler, then $K(\C)_k = \big| \W(S^k\wedge \C)\big|$.
\end{remark}

\section{Examples} \label{sec:examples}

In this section we examine several examples of assemblers and their
$K$-theories.  

\begin{example}
  Let $*$ be the assembler whose underlying category is trivial.  This
  assembler has no noninitial objects, so $\W(*)$ is the trivial category, and
  thus $K(*) \cong *$.
\end{example}

\begin{example} \label{ex:sphere} Fix a discrete group $G$, and let $\S_G$ be the
  assembler with two objects, $\initial$ and $*$, one non-invertible morphism
  $\initial \rto *$ and with $\Aut\,(*) = G$.  Then $\W(\S_G)$ has as its
  objects the finite sets and its morphisms $I\rto J$ are isomorphisms $I\rto J$
  together with an element $g_i$ for all $i\in I$.  By the
  Barratt--Priddy--Quillen--Segal Theorem (see \cite{segal74}) $K(\S_G)$ is
  stably equivalent to $\Sigma_+^\infty BG$.  A homomorphism of groups $\varphi:
  H \rto G$ gives a morphism of assemblers $\S_H \rto \S_G$, which we also
  denote by $\varphi$.

  In the special case when $G$ is the trivial group, $K(\S_G)\simeq \S$; we
  write $\S$ instead of $\S_1$.
\end{example}

\begin{example}
  Let $\C$ be the partial order of all open subsets of a topological space $X$,
  with the usual Grothendieck topology.  Then $\C$ is an assembler.  However,
  the only way that we can have a finite disjoint covering family $\{U_i \rto
  U\}_{i\in I}$ is if the $U_i$ are disjoint connected components of $U$.  Thus
  for any connected open subset $U$ of $X$ there are no nontrivial finite
  disjoint covering families of $U$.  In this case $K(\C)$ is a wedge of
  spheres, one for each connected open subset of $X$, and is therefore
  completely uninteresting.
\end{example}

Here is an example relating assemblers to logic.

\begin{example} \label{ex:functors} For any small category $\mathcal{I}$ and any
  (large) assembler $\C$, let $[\mathcal{I},\C]$ be the category of functors
  $\mathcal{I} \rto \C$ and natural transformations between them.  We define a
  topology on $[\mathcal{I},\C]$ by saying that $\{F_\alpha \rto F\}_{\alpha\in
    A}$ is a covering family if for all objects $X$ in $\C$, the family
  $\{F_\alpha(X) \rto F(X)\}_{\alpha\in A}$ is a covering family.  Then
  $[\mathcal{I},\C]$ is a (large) assembler.

  As a special case, let $\C=\Sets_i$ be the large assembler of sets and
  injective functions, and let $\mathcal{I}$ be the category of models and
  elementary inclusions of a logical theory $T$ (for an introduction to model
  theory, see for example \cite{hodges-shorter}).  Since any subcategory of an
  assembler containing the initial object and satisfying (R) is also an
  assembler, we can conclude that the category of definable sets of a theory is
  also an assembler.  $K_0$ of this assembler is the abelian group underlying
  the Grothendieck ring of definable sets.
\end{example}

\begin{remark}
  Section~\ref{sec:gr} and Examples~\ref{ex:sphere} and \ref{ex:functors} mention
  Grothendieck rings but we have not yet introduced any structures on assemblers
  that can lead to ring structures on $K_0$.  In fact, it turns out that the
  category of assemblers is a symmetric monoidal category and $K$ is a symmetric
  monoidal functor, which allows us to construct $E_\infty$ ring structures on
  the $K$-theory spectra of these assemblers.  This will be discussed in more
  detail in future work.
\end{remark}

Using the fact that $K(\S) \simeq \S$, we can construct homotopy types of all
suspension spectra.
\begin{example}
  For any pointed simplicial set $X_\dot$, $X_\dot \wedge \S$ is a simplicial
  assembler.  By definition,
  \[K(X_\dot\wedge \S) \simeq \hocolim_n K(X_n\wedge \S) \simeq \hocolim_n
  \bigvee_{X_n\bs\{*\}} \S \simeq \Sigma^\infty X_\dot.\] Thus all homotopy
  types of suspension spectra are in the image of the functor $K:s\ASB \rto
  \Sp$.
\end{example}

The original theory of \cite{zakharevich10, zakharevich11} led to the theory of
assemblers; the following example shows that assemblers are a strict
generalization of the ideas in those papers.

\begin{example} \label{ex:poly-cmx} The definition of a polytope complex and the
  notation $\cdot \lto \cdot \rsub \cdot$ can be found in \cite[Definition
  3.1]{zakharevich10}.  Given a polytope complex $\C$ we define an assembler
  $\C_a$ in the following manner.  We set $\ob \C_a = \ob \C$, and a morphism $A
  \rto B$ in $\C_a$ is given by an object $A'$ and a diagram
  \[A \lto^\sigma A' \rsub^p B \in \C.\] The composite of $A \lto^\sigma A'
  \rsub^p B$ and $B \lto^\tau B' \rsub^q C$ is given by \[A \lto^{\sigma\tau'}
  \tau^*A' \rsub^{q\tau^*p} C.\]  A family $\{A_i \lto A_i' \rsub A\}_{i\in I}$
  is a covering family exactly when $\{A_i' \rsub A\}_{i\in I}$ is a
  covering family in $\C$.  These definitions make $\C_a$ into an assembler, and
  $K(\C_a)\simeq K(\C)$.
\end{example}

The name ``polytope complex'' was inspired by the ideas of classical scissors
congruence; the new theory of assemblers allows us to make new connections in
those applications as well.

\section{Equivalences between assemblers} \label{sec:equiv}

In this section we explore two types of morphisms between assemblers which
produce equivalences on the level of $K$-theory.  The two main results,
Theorem~\ref{thm:subassemb} and Theorem~\ref{thm:SEp} are useful for
identifying assemblers that come up in various contexts.  For concretes
applications of these results, see Sections~\ref{sec:gr} and \ref{sec:polytope}. 

We begin with a result which is the basis of all of our assembler
calculations.  It is the ``inductive step'' which allows us to show that if we
have an equivalence of $K$-theory spectra at level $0$, then we must have an
equivalence of $K$-theory spectra at all levels.  Although the proof is
straightforward from previous results, we present it in its entirety here as it
is the key point in many analyses of assemblers.

By an abuse of notation, for a $k$-simplicial category $\E_{\cdots}$ we will write
$|\E_{\cdots}|$ for the simplicial set whose $n$-simplices are $N_n\E_{n\cdots
  n}$.

\begin{lemma} \label{lem:inductive-step}
  Suppose that $p:\C_\dot \rto \D_\dot$ is a morphism of simplicial assemblers
  such that the induced map of simplicial sets
  \[\big|\W(p)\big|: \big|\W(\C_\dot)\big| \rto \big|\W(\D_\dot)\big|\]
  is a weak equivalence of simplicial sets.  Then $K(p)$ is an equivalence of
  spectra. 
\end{lemma}

\begin{proof}
  It suffices to show that $K(p)_k$ is an equivalence for all $k$.  Thus we want
  to show that $\big| \W(S^k \wedge \C_\dot) \big| \rto \big| \W(S^k \wedge
  \D_\dot)\big|$ is a weak equivalence.  Let $S^k \barwedge \C_\dot$ be the
  bisimplicial assembler whose $(n,m)$-th entry is $S^k_n \wedge \C_m$.
  Applying $\W$ pointwise and applying $|\cdot|$, we see that 
  \[\big| \W(S^k \wedge \C_\dot)\big| \cong \big| \W(S^k \barwedge
  \C_\dot)\big|.\] Thus it remains to show that $p$ induces a weak equivalence
  \[\big| \W(S^k \barwedge \C_\dot) \big| \rto \big| \W(S^k \barwedge
  \D_\dot)\big|.\]  It suffices to show that for all $n$, $\big|
  \W(S^k_n \wedge \C_\dot) \big| \rto \big|\W(S^k_n \wedge \D_\dot)\big|$ is a
  weak equivalence.  We have the following commutative diagram:
  \begin{diagram}[5em]
    {\W(S^k_n \wedge \C_\dot) & \W(S^k_n \wedge \D_\dot) \\
      \W(\C_\dot)^{n^k} & \W(\D_\dot)^{n^k} \\};
    \arrowsquare{\W(S^k \wedge p)}{\simeq}{\simeq}{p^{n^k}}
  \end{diagram}
  The vertical morphisms are level equivalences of simplicial categories by
  Proposition~\ref{prop:Wprop}(3), and the bottom morphism is an equivalence
  after applying $|\cdot|$, as it is just an $n^k$-fold product of $p$
  with itself.  By two-of-three we can therefore conclude that $\W(S^k\wedge p)$
  is a weak equivalence after geometric realization.
\end{proof}

\begin{corollary} \label{cor:equiv-ass}
  If $F: \C\rto \D$ is a morphism of assemblers which is an equivalence on the
  underlying categories, then $K(F)$ is a weak equivalence of spectra.
\end{corollary}

\begin{proof}
  By Lemma~\ref{lem:inductive-step} it suffices to check that $N\W(F)$ is
  a weak equivalence.  However, since $F$ is an equivalence of categories it is
  straightforward that $\W(F)$ is, as well; since equivalences of categories are
  homotopy equivalences on nerves, we are done.
\end{proof}

We use Quillen's Theorem A to show that morphisms become equivalences after
applying $|\cdot|$.

\begin{quillena} \label{thm:quillena} Suppose that $F:\C\rto \D$ is a functor
  between small categories such that $N(F/Y)$ is contractible for all objects
  $Y$ in $\D$.  Then $N F:N\C\rto N\D$ is a homotopy equivalence.
\end{quillena}

We say a preorder $\C$ is \textsl{cofiltered} if for any two objects $A,B$ in
$\C$ there exists a diagram
\[A \lto X \rto B\]
in $\C$.  Quillen's Theorem A has the following corollary:

\begin{lemma} \label{lem:cofiltpre}   Any cofiltered preorder is contractible.
\end{lemma}
This lemma is the dual of the specialization to preorders of \cite[Corollary 2,
Section 1]{quillen73}.

\subsection{Restriction to subcomplexes} \label{sec:subassemb}

Recall that a subcategory $\D$ of an assembler $\C$ is called a
\textsl{subassembler} if it is an assembler and the inclusion $\D \rto \C$ is a
morphism of assemblers.  In this section we will prove Theorem~\ref{thm:subassemb}.

\begin{theorem:subassemb}
  Let $\C$ be an assembler and $\D$ a full subassembler.  If for every object
  $A$ there exists a finite disjoint covering family $\{D_i \rto A\}_{i\in
    I}$ such that $D_i$ is in $\D$ for all $i\in I$ then the induced map $K(\D) \rto
  K(\C)$ is an equivalence of spectra.
\end{theorem:subassemb}

\begin{proof}
  We begin with a couple of observations.  Since $\D$ is a full subcategory of
  $\C$, $\W(\D)$ is a full subcategory of $\W(\C)$.  In addition, for every
  object $A$ in $\W(\C)$ there exists an object $B$ in $\W(\D)$ and a morphism $B
  \rto A$ in $\W(\C)$.  To see this, write $A = \SCob{A}{i}$, and choose finite
  disjoint covering families $\{B^{(i)}_j\rto A_i\}_{j\in J_i}$ with
  $B^{(i)}_j$ in $\D$ for all $i\in I$ and $j\in J_i$.  Then setting $B =
  \{B^{(i)}_j\}_{(i,j)\in \coprod_{i\in I}J_i}$ gives us the desired object, and
  the covering families define the morphism $B \rto A$.

  By Proposition~\ref{prop:Wprop}(2), for any two morphisms $A \rto Y$ and $B\rto
  Y$ there exists a commutative square
  \begin{diagram}
    { C & A \\ B & Y \\};
    \arrowsquare{}{}{}{}
  \end{diagram}
  in $\W(\C)$; thus $\W(\C)$ is cofiltered.

  By Lemma~\ref{lem:inductive-step} it suffices to show that $i:\W(\D) \rto
  \W(\C)$ is an equivalence after geometric realization.  By Quillen's Theorem A
  it suffices to show that the category $i/Y$ is contractible for all $Y$ in
  $\W(\C)$.  As $\W(\C)/Y$ is a preorder and $\W(\D)$ is a subcategory of
  $\W(\C)$, $i/Y$ is also a preorder.  Thus by Lemma~\ref{lem:cofiltpre} it
  suffices to show that it is cofiltered.  To find an object above $A \rto Y$
  and $B \rto Y$ in $i/Y$ we complete
  \[A \rto Y \lto B\] to a square in $\W(\C)$ with apex $C$, and then choose a
  morphism $C' \rto C$ with $C'$ in $\W(\D)$.  As $\W(\D)$ is a full subcategory,
  the morphisms $C' \rto A$ and $C'\rto B$ are morphisms in $\W(\D)$, and
  therefore $i/Y$ is cofiltered.
\end{proof}

As levelwise weak equivalences in simplicial spectra map to weak equivalences
under realization, we immediately have the following:

\begin{corollary}
  Let $\D_\dot \rto \C_\dot$ be a morphism of simplicial assemblers such that
  for each $n$, $\D_n \rto \C_n$ is an inclusion of a subassembler with
  sufficiently many covers.  Then the map $K(\D_\dot) \rto K(\C_\dot)$ is a weak
  equivalence of spectra.
\end{corollary}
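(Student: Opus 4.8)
The statement to prove is the corollary: a levelwise subassembler inclusion with "sufficiently many covers" induces a $K$-theory equivalence of simplicial assemblers.

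The plan is to reduce to the statement already proved—Theorem~\ref{thm:subassemb}—levelwise, and then invoke the fact that $K$ of a simplicial assembler is a homotopy colimit, so a levelwise equivalence of the diagrams of spectra yields an equivalence after taking $\hocolim$. Concretely, first I would observe that by Theorem~\ref{thm:subassemb}, for each $n$ the hypothesis that $\D_n \rto \C_n$ is a subassembler inclusion with sufficiently many covers guarantees that $K(\D_n) \rto K(\C_n)$ is an equivalence of spectra. Second, since $K(\C_\dot) = \hocolim_{[n]\in\Delta^\op} K(\C_n)$ and likewise for $\D_\dot$, and since the map $K(\D_\dot)\rto K(\C_\dot)$ is induced by the natural transformation of diagrams $[n]\mapsto (K(\D_n)\rto K(\C_n))$, it is the $\hocolim$ of a pointwise equivalence of $\Delta^\op$-diagrams of spectra.

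Third, I would appeal to the standard fact—already flagged in the remark following Definition~\ref{def:simpass}, that homotopy colimits of spectra are computed levelwise and modeled by the diagonal of the associated bisimplicial set—that a natural transformation of $\Delta^\op$-diagrams of spectra which is a pointwise weak equivalence induces a weak equivalence on $\hocolim$. The parenthetical phrasing in the corollary statement (``levelwise weak equivalences in simplicial spectra map to weak equivalences under realization'') is exactly this fact; so really the only content is: (i) apply Theorem~\ref{thm:subassemb} in each simplicial degree, then (ii) apply invariance of realization/$\hocolim$ under levelwise equivalence. I would write it in essentially that two-sentence form.

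The only thing resembling an obstacle is bookkeeping: one must make sure the maps are genuinely natural in $[n]$, i.e. that the collection $\{K(\D_n)\rto K(\C_n)\}_n$ assembles into a morphism of $\Delta^\op$-diagrams compatible with $K(\D_\dot)\rto K(\C_\dot)$. This is immediate from functoriality of $K$ on $\ASB$ (Theorem~\ref{thm:K0}) applied to the natural transformation $\D_\dot \rto \C_\dot$ of simplicial assemblers, so there is no real difficulty. Hence the proof is a direct corollary and needs no further argument beyond citing Theorem~\ref{thm:subassemb} degreewise and invoking levelwise invariance of the homotopy colimit, exactly as the one-line proof in the excerpt indicates.
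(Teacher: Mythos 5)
Your proposal is correct and takes the same route as the paper: apply Theorem~\ref{thm:subassemb} in each simplicial degree, then use that a levelwise weak equivalence of $\Delta^\op$-diagrams of spectra induces a weak equivalence on the homotopy colimit. The paper gives precisely this argument in the one-sentence preamble to the corollary, so there is nothing further to add.
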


\subsection{Epimorphic assemblers with sinks}

\begin{definition}
  Let $\C$ be an assembler.  We say that $\C$ is an \textsl{epimorphic assembler
    with a sink} if it satisfies the following three conditions:
  \begin{itemize}
  \item[(S)] $\C$ contains an object $S$, called a \textsl{sink}, such that for
    all other objects $A$, $\C(A,S) \neq \eset$.
  \item[(Ep)] All morphisms with noninitial domain in $\C$ are epimorphisms, and
    all families $\{A \rto B\}$ consisting of single morphisms with $A\neq
    \initial$ are covering families.
  \item[(D)] If $A,B\neq \initial$ no two morphisms $A \rto C$ and $B \rto C$ are
    disjoint.
  \end{itemize}
\end{definition}
Morally speaking, in this case $\C$ behaves like the assembler $\S_G$, which has
a single noninitial object $S$ with a group of automorphisms acting on it.  If
for a noninitial $A$ we let $\C_A$ be the full subassembler of $\C$ containing
all objects $B$ for which there exists a morphism $B \rto A$, then by
Theorem~\ref{thm:subassemb} the inclusion $\C_A \rto \C$ induces an equivalence
on $K$-theory.  Thus $\C$ is ``homogeneous'' in a certain sense.  By axiom (Ep)
every object in $\W(\C)$ has a morphism to an object each of whose components is
equal to $S$, and by axiom (D) all morphisms preserve the cardinality of the
indexing set.  Thus the higher homotopical structure of $K(\C)$ must come from
``partially defined automorphisms'' of $S$, which are given by zigzags of
morphisms in $\W(\C)$.  In this section we show that this intuition is correct
by showing that there exists a group $G$ and a morphism of assemblers $\C \rto
\S_G$ which induces an equivalence on $K$-theory.

\begin{definition}
  Let $\C$ be an epimorphic assembler with a sink.  The \textsl{group $G$
    associated to $\C$} is defined as follows.  The elements of $G$ are
  equivalence classes of diagrams \smorpair{A}{}{} in $\C$, where $A$ is any
  noninitial object.  We define $\bigg(\smorpair{A}{f_1}{f_2}\bigg) \sim
  \bigg(\smorpair{B}{g_1}{g_2}\bigg)$ if there exists an object $C$ that fits
  into a diagram
  \begin{general-diagram}{1em}{3em}
    { & B \\ S & C & S \\ & A \\};
    \to{1-2}{2-1}_{g_1} \to{1-2}{2-3}^{g_2}
    \to{3-2}{2-1}^{f_1} \to{3-2}{2-3}_{f_2}
    \to{2-2}{1-2} \to{2-2}{3-2}
  \end{general-diagram}
  which commutes.  The multiplication on $G$ of $\bsmorpair{A}{f_1}{f_2}$ and
  $\bsmorpair{B}{g_1}{g_2}$ is represented by the composition down the
  left and the right in the diagram
  \begin{squisheddiagram}
    { && X  \\
      & A & & B \\
      S && S && S \\};
    \to{1-3}{2-2}_{g_1'} \to{1-3}{2-4}^{f_2'}
    \to{2-2}{3-1}_{f_1} \to{2-2}{3-3}^{f_2} \to{2-4}{3-3}_{g_1} \to{2-4}{3-5}^{g_2}
  \end{squisheddiagram}
  where the middle square is any completion of $f_2$ and $g_1$ to a commutative
  square where $X$ is noninitial (which exists by axiom (D)).  The identity in
  $G$ is represented by two copies of the identity morphism $1_S$; the inverse
  of $\bsmorpair{A}{f}{g}$ is $\bsmorpair{A}{g}{f}$.
\end{definition}

To see that the product in $G_\C$ is well-defined, first suppose that there
exists a diagram
\begin{diagram}[3em]
  { X & & Y \\ A && B \\ & S \\};
  \to{1-1}{2-1}^{g_1'} \to{1-1}{2-3}+{above left=1em}{f_2'}
  \to{1-3}{2-1}+{above right=1em}{g_1''} 
  \to{1-3}{2-3}^{f_2''} 
  \to{2-1}{3-2}_{f_2} \to{2-3}{3-2}^{g_1}
\end{diagram}
which gives us two different completions.  Let $Z$ be noninitial with morphisms
$\alpha:Z \rto X$ and $\beta:Z \rto Y$ which complete $f_2'$ and $f_2''$ to a
commutative square.  Then we have
\begin{align*}
  \makeshort[.6]{(Z \rto^\alpha X \rto^{g_1'} A \rto^{f_2} S)} &= \makeshort[.6]{(Z
    \rto^\alpha X \rto^{f_2'} B \rto^{g_1} S)} \\
  &= \makeshort[.6]{(Z \rto^\beta Y \rto^{f_2''} B \rto^{g_1} S)} \\
  &= \makeshort[.6]{(Z \rto^\beta Y \rto^{g_1''} A \rto^{f_2} S).}
\end{align*}
As all morphisms in $\C$ are monic, this means that $\alpha$ and $\beta$ also
complete $g_1'$ and $g_1''$ to a commutative square, so we have a diagram
\begin{squisheddiagram}[3em]
  { && X & \\ 
    S & A & Z & B & S\\
    && Y  \\};
  \to{1-3}{2-2}_{g_1'} \to{1-3}{2-4}^{f_2'}
  \to{3-3}{2-2}^{g_1''} \to{3-3}{2-4}_{f_2''}
  \to{2-3}{1-3}^{\alpha} 
  \to{2-3}{3-3}^{\beta}
  \to{2-2}{2-1}_{f_1} \to{2-4}{2-5}^{g_2}
\end{squisheddiagram}
and thus the two different representatives represent the same class and the
product in $G$ is well-defined.  Associativity follows analogously.

\begin{definition} \label{def:sinkfam} Let $\C$ be an epimorphic assembler with a
  sink, and let $G$ be the group associated to $\C$.  Let $\F$ be a choice of a
  morphism $f_A:A \rto S$ for each object $A$ in $\C$, with the assumption that
  $f_S = 1_S$.  Then we can define a morphism of assemblers $\pi_{\F}:\C \rto
  \S_G$ by
  \[\pi_\F(A) = * \qquad\hbox{and}\qquad \pi_{\F}(g:A \rto B) =
  \bsmorpair{A}{f_A}{f_Bg}.\]
\end{definition}

\begin{theorem} \label{thm:SEp} Let $\C$ be an epimorphic assembler with a sink,
  and let $G$ be the group associated to $\C$. Then for every choice of family
  $\F = \{f_A:A \rto S\}$ the morphism of assemblers $\pi_{\F}:\C\rto \S_G$
  induces an equivalence on $K$-theory.
\end{theorem}

\begin{proof}
  Let $\pi = \pi_\F$.  The functor $\pi$ is continuous by definition.  Thus to
  see that this is a valid morphism of assemblers we need to check that it
  preserves disjointness, or, equivalently, that no two morphisms between
  noninitial objects in $\C$ are disjoint.  This is true by property (D).

  We now need to check that this induces an equivalence on $K$-theory.  By
  Lemma~\ref{lem:inductive-step} it suffices to show that the map $\W(\C)\rto
  \W(\S_G)$ induces a homotopy equivalence on geometric realization.

  We us Quillen's Theorem A.  We need to show that for all $Y$ in $\W(\S_G)$,
  $\W(\pi)/Y$ is contractible.  But $Y = \{*\}_{i\in I}$ and $\W(\pi)/Y \simeq
  (\W(\pi)/\{*\})^{|I|}$, so it suffices to show that $\D = \W(\pi)/\{*\}$ is
  contractible.  An object of $\D$ is a pair $(A,g)$ with $A$ in $\C$ and $g\in
  G$. We claim that $\D$ is a preorder.  Indeed, suppose that we have two
  objects $(A,g)$ and $(B,h)$.  A morphism $\{A\}_{\{*\}} \rto \{B\}_{\{*\}}$ in
  $\W(\C)$ is a morphism $f:A \rto B$ in $\C$; thus a morphism $ (A,g)\rto
  (B,h)$ is an $f$ in $\C$ such that $\pi(f) = h^{-1}g$.  Therefore $\D$ is a
  preorder exactly when $\pi$ is faithful.  To show that $\pi$ is faithful we
  need to show that if $r,s\in \C(A,B)$ are such that there exists a diagram
  \begin{general-diagram}{2em}{3em}
    { & A \\ S & C & B & S \\ & A\\};
    \to{1-2}{2-1}+{above}{f_A} \to{1-2}{2-3}+{above}{r} \to{3-2}{2-1}+{below}{f_A}
    \to{3-2}{2-3}+{below}{s} 
    \to{2-2}{1-2}+{right=-.2ex}{h_1} \to{2-2}{3-2}+{right=-.2ex}{h_2} \to{2-3}{2-4}^{f_B}
  \end{general-diagram}
  then $r=s$.  As $f_A$ is monic, it follows that $h_1 = h_2$.  As $h_1$ is epic,
  it follows that $r=s$, so we see that $\pi$ is faithful.
  
  As $\D$ is a preorder, to show that it is contractible by
  Lemma~\ref{lem:cofiltpre} it suffices to show that it is cofiltered.  Let
  $(A,g)$ and $(B,h)$ be two objects in $\D$.  An object $(C,k)$ above them is a
  pair of morphisms $f:C\rto A$ and $f':C\rto B$ in $\C$, such that $\pi(f) =
  gk^{-1}$ and $\pi(f') = hk^{-1}$; thus we want to find an object $C$ in $\C$ and
  a pair of morphisms $f:C\rto A$ and $f':C\rto B$ such that $g^{-1}\pi(f) =
  h^{-1}\pi(f')$.

  Pick a representative $\smorpair{X}{g_1}{g_2}$ for $g$ and a representative
  $\smorpair{Y}{h_1}{h_2}$ for $h$.  Pick commutative squares
  \begin{diagram}
    { X' & A && Y' & B \\
      X & S && Y & S \\};
    \arrowsquare{g'_2}{f_A'}{f_A}{g_2}
    \to{1-4}{1-5}^{h_2'}
    \to{1-4}{2-4}_{f_B'}
    \to{1-5}{2-5}^{f_B}
    \to{2-4}{2-5}^{h_2}
  \end{diagram}
  with noninitial $X'$ and $Y'$; these exist because no two morphisms in $\C$
  are disjoint.  Let $C$ be any completion of \[X' \rto^{f_A'} X \rto^{g_1} S
  \lto^{h_1} Y \lto^{f_B'} Y'\] to a square, with morphisms $\alpha: C \rto X'$
  and $\beta: C \rto Y'$.  Define $f$ and $f'$ by 
  \[C \rto^\alpha X' \rto^{g_2'} A \qquad f': C
  \rto^\beta Y' \rto^{h_2'} B.\]  We want to show that $f$ and $f'$ satisfy the
  desired conditions.  We have the following diagram, where the starred square
  may not commute:
  \begin{general-diagram}{2em}{3em}
    {&& X' \\
      & X && A \\
      S && S &\star& C \\
      & Y && B \\
      && Y' \\};
    \to{2-2}{3-1}_{g_1} \to{2-2}{3-3}^{g_2}
    \to{4-2}{3-1}^{h_1} \to{4-2}{3-3}_{h_2}
    \to{1-3}{2-2}_{f_A'} \to{1-3}{2-4}!{g_2'}
    \to{5-3}{4-2}^{f_B'} \to{5-3}{4-4}!{h_2'}
    \to{2-4}{3-3}_{f_A} \to{4-4}{3-3}^{f_B}
    \arrow{out=90,in=0,->}{3-5}{1-3}_{\alpha}
    \arrow{out=-90,in=0,->}{3-5}{5-3}^\beta
    \to{3-5}{2-4}!f \to{3-5}{4-4}!{f'}
    
  \end{general-diagram}
  Then $g = \bsmorpair{X}{g_1}{g_2} = 
  \bsmorpair{C}{g_1f_A'\alpha}{g_2f_A'\alpha} = 
  \bsmorpair{C}{g_1f_A'\alpha}{f_Ag_2'\alpha} =
  \bsmorpair{C}{g_1f_A'\alpha}{f_Af}$, and thus 
  \[g^{-1}\pi(f) = \bsmorpair{C}{f_Af}{g_1f_A'\alpha}
  \bsmorpair{C}{f_C}{f_Af} =
  \bsmorpair{C}{f_C}{g_1f_A'\alpha}.\]
  Analogously,
  \[h^{-1}\pi(f') = \bsmorpair{C}{f_C}{h_1f_B'\beta}.\] But since the outside of
  the diagram commutes, $h_1f_B'\beta = g_1f_A'\alpha$, and
  $g^{-1}\pi(f)=h^{-1}\pi(g')$.  Thus $\D$ is cofiltered and by Quillen's
  Theorem A we have that $|\W(\C)|\rto |\W(\S_G)|$ is a weak equivalence.
\end{proof}

One important thing to observe is that while $K(\pi_\F)$ is an equivalence this
equivalence is not canonical: we may get a different equivalence for each choice
of morphisms to the sink.  We cannot always find a relation between two
different choices of $\F$, but in nice geometric cases we can:

\begin{proposition} \label{prop:differentF}
  Suppose that $\F$ and $\F'$ are two choices of families as in
  Definition~\ref{def:sinkfam}, and let $\psi:\C \rto \C$ be an automorphism
  such that $\psi(S) = S$.  If there exists a family of isomorphisms
  $\{\varphi_A:A \rto \psi(A)\,|\, A\in \C\}$ such that for all $A\neq S$ the
  diagrams
  \begin{diagram}
    { A & \psi(A) & \qquad & A & \psi(A) \\ 
      &   S  &        & S & S \\};
    \to{1-1}{1-2}^{\varphi_A} \to{1-4}{1-5}^{\varphi_A} 
    \to{1-1}{2-2}_{f'_A} \to{1-2}{2-2}^{f_{\psi(A)}} \to{1-4}{2-4}_{f_A}
    \to{1-5}{2-5}^{f_{\psi(A)}} \to{2-4}{2-5}^{\varphi_S}
  \end{diagram}
  commute, then
  \[\pi_{\F'} = \Phi \pi_{\F},\]
  where $\Phi: \S_G \rto \S_G$ is the morphism of assemblers induced by
  conjugation by $\bsmorpair{S}{1}{\varphi_S}$ in $G$.  
\end{proposition}

\begin{proof}
  The desired formula holds tautologically on objects, so we simply need to
  check that it holds on morphisms.  We have
  \begin{eqnarray*}
    \pi_{\F'}(g: A \rto B) &=& \bsmorpair{A}{f'_A}{f'_Bg} =
    \bsmorpair{A}{f_{\psi(A)}\varphi_A}{f_{\psi(B)}\varphi_B g} =
    \bsmorpair{A}{\varphi_S f_A}{\varphi_Sf_Bg} \\
    &=& \bsmorpair{S}{\varphi_S}{1} \bsmorpair{A}{f_A}{f_Bg} \bsmorpair{S}{1}{\varphi_S}.
  \end{eqnarray*}
\end{proof}

\begin{proposition} \label{prop:restricting} Let $\C$ be an epimorphic assembler
  with sinks and let $U$ be a noninitial object of $\C$.  Let $\C_U$ be the full
  subcategory of $\C$ of those objects $A$ such that $\Hom_\C(A,U) \neq \eset$;
  this is again an epimorphic assembler with sink $U$.  Let $\F=\{f_A:A \rto
  S\}$ be a family of morphisms to the sink $S$ of $\C$, and let $\F'=\{f'_A:A
  \rto U\}$ be a family of morphisms to $U$ in $\C_U$ which satisfy $f_A =
  f_Uf'_A$ for all $A$ in $\C_U$.  Let $G$ and $G'$ be the groups associated to
  $\C$ and $\C_U$, respectively, and let $\varphi:G \rto G'$ be defined by
  \[\varphi\Big[\morpair{U}{A}{f}{g}\Big] = \bsmorpair{A}{f_Uf}{f_Ug}.\]
  Then $\varphi$ is an isomorphism of groups and the diagram
  \begin{diagram}
    { \C_U & \C \\ \S_{G'} & \S_G \\};
    \arrowsquare{}{\pi_{\F'}}{\pi_\F}{\S_\varphi}
  \end{diagram}
  commutes.
\end{proposition}

\begin{proof}
  The fact that the diagram commutes follows directly from the definitions.  To
  check that $\varphi$ is an isomorphism, note that it is injective because
  $\C_U$ contains all objects above $U$, and it is surjective because any
  representing pair can be modified to factor through $f_U$.
\end{proof}

\section{Applications of the main theorems} \label{sec:moreex}

In this section we give two applications of the theorems in
Section~\ref{sec:equiv} and of Theorem~\ref{thm:cofibercalc}.  Although
Theorem~\ref{thm:cofibercalc} has not yet been proved, the proof is technical,
long and not terribly illuminating, so we prefer to defer the proof until after
some interesting applications of the theorem are illustrated.  For details of
the proof, see Sections~\ref{sec:cofiber} and \ref{sec:proof2}.

For ease of reading, we reproduce the theorem here.  Recall that for an object
$A$ of $\C$, we say that $\C$ \textsl{has complements for $A$} if any morphism
$A \rto B$ is in a finite disjoint covering family of $B$.
\begin{theorem:cofibercalc}
 Let $\D$ be a subassembler of $\C$
  such that $\D$ is a sieve in $\C$ and such that $\C$ has complements for all
  objects of $\D$.  Then
  \[K(\D) \rto K(\C) \rto K(\C\bs\D)\]
  is a cofiber sequence.
\end{theorem:cofibercalc}

\subsection{The Grothendieck ring of varieties} \label{sec:gr} 

Let $k$ be a field.  We define $\V_k$ to be the category whose objects are
$k$-varieties (by which we mean reduced separated schemes of finite type over
$k$), and whose morphisms are finite composites of open embeddings and closed
embeddings.  The topology on $\V_k$ is generated by the coverage $\{Y \rto X,
X\bs Y \rto X\}$ for closed embeddings $Y \rto X$.  (For background on
coverages, see for example \cite{johnstone02v2}.)  Then $\V_k$ is an assembler.
$K_0(\V_k)$ is the abelian group underlying the Grothendieck ring of varieties.

\begin{example} \label{ex:pointcount} Let $k$ be a finite field and let $\F$ be
  the assembler of finite sets and injections.  Let $L$ be a finite algebraic
  extension of $k$.  Then we have a morphism of assemblers $c_L: \V_k \rto \F$
  given by $c_L(X) = X(L)$.  Thus point counting is an example of a morphism of
  assemblers, and taking $K$-theory gives us a ``derived'' notion of point
  counting: a map $K(\V_k) \rto \S$.  (The $K$-theory of $\F$ is equivalent to
  the sphere spectrum by Theorem~\ref{thm:subassemb} for the inclusion $\S \rto
  \F$ taking $*$ to a singleton set.)
\end{example}

Let $\V_k^{(n)}$ be the full subassembler of $\V_k$ consisting of all varieties
of dimension at most $n$.  The goal of this section is to prove the following
theorem: 
\begin{theorem}\label{eq:SS}
  Let $B_n$ be the set of birational isomorphism classes of varieties of
  dimension $n$.  For any variety $X$ over $k$, let $k(X)$ be its function
  field. Then
  \[\hocofib \big(K(\V_k^{(n-1)}) \rto K(\V_k^{(n)})\big) \simeq \bigvee_{[X]\in
    B_n} \Sigma_+^\infty B\Aut\, k(X).\] 
\end{theorem}
This theorem gives us a spectral sequence
\[E^1_{p,q} = \bigoplus_{[X]\in B_n} \pi_p^s B\Aut\, k(X) \Rto K_p K(\V_k)\]
that is a key tool in the proof of Theorem~\ref{thm:kernelL}.

Note that as a subassembler of $\V_k^{(n)}$, the assembler $\V_k^{(n-1)}$
satisfies the conditions of Theorem~\ref{thm:cofibercalc}.  Thus by
Theorem~\ref{thm:cofibercalc},
\[\hocofib \big(K(\V_k^{(n-1)}) \rto K(\V_k^{(n)})\big) \simeq K(\V^{(n)}_k \bs
\V_k^{(n-1)}).\] The rest of this section is dedicated to analyzing the homotopy
type of $K(\V^{(n)}_k \bs \V_k^{(n-1)})$.

For a fixed irreducible variety $X$ of dimension $n$, let $\C_X$ be the full
assembler of $\V_k^{(n)}\bs\V_k^{(n-1)}$ with objects varieties $Y$ of dimension
$n$ for which there exists a morphism $Y \rto X$ (although we do not include
this morphism in the data).  For a nonempty variety $Y$ the only finite disjoint
covering families are of the form $\{Y' \rto Y\}$ for a noninitial $Y'$; any
morphism between noninitial objects gives a finite disjoint covering family.

Let $B_n$ be as above, and pick a representative $X_\alpha$ for each $\alpha\in
B_n$.  Let $\C = \bigvee_{\alpha\in B_n} \C_{X_\alpha}$.  We claim that $\C$ is
a full subassembler of $\V_k^{(n)} \bs \V_k^{(n-1)}$ which satisfies the
conditions of Theorem~\ref{thm:subassemb}, and thus $K(\V^{(n)}\bs
\V^{(n-1)})\simeq K(\C)$.  The subassemblers $\C_{X_\alpha}$ do not intersect
inside $\V_k^{(n)} \bs \V_k^{(n-1)}$, since if some noninitial $U$ were inside
two of them then there would be morphisms $U \rto X_\alpha$ and $U \rto
X_\beta$, which gives a birational isomorphism between $X_\alpha$ and $X_\beta$,
which cannot happen when $\alpha\neq \beta$.  Therefore $\C$ is a subassembler
of $\V_k^{(n)} \bs \V_k^{(n-1)}$, and it remains to check that it is full.
Suppose it were not full, so that there existed some morphism $U_\alpha \rto
V_\beta$ with $U_\alpha\in \C_{X_\alpha}$ and $V_\beta\in \C_{X_\beta}$.  Then
again there would exist a morphism $U_\alpha \rto V_\beta \rto X_\beta$, and
$X_\alpha$ would be birational to $X_\beta$, a contradiction.

Now we claim that $\C$ satisfies the conditions of Theorem~\ref{thm:subassemb}
inside $\V_k^{(n)} \bs \V_k^{(n-1)}$. Suppose that $Y$ is any variety of
dimension $n$.  It can be written as $Y = \bigcup_{i=1}^\ell Y_i$, where the
$Y_i$ are the irreducible components of $Y$; we also assume without loss of
generality that there exists $1\leq m \leq \ell$ such that $\dim Y_i = n$ if $i
\leq m$ and $\dim Y_i < n$ otherwise.  Since $B_n$ is all birational isomorphism
classes of irreducible varieties, $[Y_i] = [X_{\alpha_i}]$ for some $\alpha_i\in
B_n$.  Thus there is some open subset $U_i$ of $Y_i$ which is disjoint from all
other $Y_j$ and such that there exists a morphism $U_i \rto X_{\alpha_i}$.
Therefore $U_i\in \C$.  The family $\{U_i \rto Y\}$ is a covering family in
$\V_k^{(n)} \bs \V_k^{(n-1)}$, since it can be completed to the finite disjoint
covering family 
\[\makeshort{\left\{U_i \rto Y, \bigcup_{i=1}^m (Y_i \bs U_i) \cup \bigcup_{i=m+1}^\ell Y_i \rto
Y\right\}}\] in $V_k^{(n)}$.

From this we can conclude that 
\[K(\V_k^{(n)}\bs \V_k^{(n-1)}) \simeq K(\C)
\simeq \bigvee_{[X_\alpha]\in B_n} K(\C_{X_\alpha}).\]  Thus we can now restrict
our attention to determining the homotopy type of $K(\C_X)$ for any variety $X$.

In the following analysis, we will fix the model of $\C_X$: we think of the
objects of $\C_X$ as subsets of $X(\bar k)$; then for every $Y$ in $\C_X$ there
is a preferred morphism $\iota_Y: Y \rto X$ which is an inclusion of points.
With the choice of this model we have chosen ``coordinates'' for all objects of
$\C_X$ simultaneously; by Corollary~\ref{cor:equiv-ass} this choice does not
affect the homotopy type of the $K$-theory of the assembler.  

We prove a slightly more general theorem than is needed to complete the proof of
Theorem~\ref{eq:SS}, as it will be necessary for the proof of
Theorem~\ref{thm:kernelL}.

\begin{theorem}
  Let $X$ be an irreducible variety of dimension $n$, and let $\C_X$ be the
  assembler whose objects are subvarieties of $X$ of dimension $n$, modeled as
  algebraic subsets of $X(\bar k)$ defined over $k$.
  \begin{itemize}
  \item[(1)] $K(\C_X) \simeq \Sigma^\infty_+ B\Aut\, k(X)$.  
  \item[(2)] For any variety $X$ there is a morphism of assemblers $\C_X \rto
    \C_{X\times \A^1}$ which takes a variety $Y$ to $Y\times \A^1$ and a
    morphism $f:Y \rto Y'$ to $f\times 1_{\A^1}$.  Write $\Aut k(X\times \A^1)$
    as $\Aut k(X)(t)$ for a transcendental $t$; let $\varphi: \Aut k(X) \rto
    \Aut k(X)(t)$ be the homomorphism that includes $\Aut k(X)$ as those
    automorphisms which fix $t$.  Then the diagram
    \begin{diagram}
      { \C_X & \C_{X\times \A^1} \\ \S_{\Aut\,k(X)} & \S_{\Aut\,k(X\times\A^1)}
        \\};
      \arrowsquare{}{}{}{\varphi}
    \end{diagram}
    commutes.
  \end{itemize}
\end{theorem}

\begin{proof}
  This proof relies on Theorem~\ref{thm:SEp}.  For any variety $Z$ we define the
  family
  \[\F_Z = \{\iota_A: A \rto Z\,|\, A\in \C_Z\}.\]
 
  \noindent
  Proof of (1):  The assembler $\C_X$ satisfies the conditions of
    Theorem~\ref{thm:SEp}, so we get a morphism of assemblers $\pi_{\F_X}: \C_X \rto
    \S_{G}$ which induces an equivalence after applying $K$-theory. In this case
    $G$ is the group $\Aut\, k(X)$, the birational automorphisms of $X$.
  
  \noindent
  Proof of (2): With our definition of $\F_{X\times \A^1}$, for any subvariety $Y$
  of $X$ we have $\iota_{Y\times \A^1} = \iota_Y \times 1_{\A^1}$.  Thus if we define
  \[\varphi\Big[\morpair{X}{A}{f}{g}\Big] \defeq \Big[
  \begin{tikzpicture}[baseline=(A.base)] \node[anchor=east]%
    (A) at (0,0) {$X\times \A^1$}; \node[anchor=west] (B) at (3em,0) {$A\times \A^1$};%
    \diagArrow{<-, bend left}{A}{B}!{f\times 1_{\A^1}} \diagArrow{<-, bend %
      right}{A}{B}!{g\times 1_{\A^1}}%
  \end{tikzpicture}\Big]\]
  the given diagram commutes.  This $\varphi$ is injective and hits all partial
  automorphisms of $X\times \A^1$ which are defined ``fiberwise'' on some
  $U\times \A^1$, which gives the desired algebraic description.
\end{proof}

\subsection{Classical scissors congruence} \label{sec:polytope}

There are two standard approaches to scissors congruence for polytopes.  The
first focuses on one dimension at a time by saying that two $n$-polytopes are
``interior disjoint'' if their intersection contains no $n$-polytopes.  We can
then define scissors congruence of $n$-polytopes in $\R^\infty$ in the following
manner.  An $n$-simplex is defined to be the convex hull of $n+1$ points in
general position; an $n$-polytope is a finite union of $n$-simplices.  We then
say that two $n$-polytopes $P$ and $Q$ are scissors congruent if
\begin{itemize}
\item we can write $P = \bigcup_{i=1}^m P_i$ and $Q = \bigcup_{i=1}^m Q_i$ such
  that $P_i\cong Q_i$, and
\item $P_i \cap P_j$ and $Q_i \cap Q_j$ contain no $n$-simplices for $i\neq j$.
\end{itemize}
This is the approach considered in the work of Dupont and Sah (see \cite{sah79}, 
\cite{dupontsah82} and \cite{dupont01}).  We write the scissors congruence
group of $n$-polytopes defined through this approach as $\mathcal{P}_n$.

An alternate approach considers all intersections of all dimensions.  Here we
want to be able to decompose polytopes into completely disjoint sets.  The
simplest way to write this down is to say that an $n$-simplex is the interior
(in the $n$-space spanned by the points) of the convex hull of $n+1$ points, and
that an $n$-polytope is a finite union of simplices $X_i,\ldots,X_k$, where
$\dim X_i \leq n$ and for some $i_0$, $\dim X_{i_0} = n$.  Thus for example, a
closed interval is a $1$-polytope because a point is a $0$-simplex, an open
interval is a $1$-simplex, and a closed interval is the union of an open
interval and two points.  We then say that two polytopes $P$ and $Q$ are
scissors congruent if
\begin{itemize}
\item we can write $P = \bigcup_{i=1}^m P_i$ and $Q = \bigcup_{i=1}^m Q_i$ such
  that $P_i \cong Q_i$, and
\item $P_i \cap P_j = Q_i \cap Q_j = \eset$.
\end{itemize}
This is an approach which is analogous to the scissors congruence of definable
sets (considered in, for example, \cite{dries98}).  When the only isometries
allowed are translations these groups were analyzed by McMullen in
\cite{mcmullen89}; the more general case was studied by Goodwillie in
\cite{goodwillie14}.  We denote the scissors congruence group of such polytopes
by $\sP$.

As dimension is a scissors-congruence invariant, we have the following
observation: 
\begin{observation} \label{obs:polytope}
  The group $\sP$ is filtered by dimension.  More formally, if we define
  $\sP_n$ to be the scissors congruence group of $m$-polytopes with
  $m\leq n$ then we have a sequence of homomorphisms
  \[\makeshort{0 = \sP_{-1} \rto \sP_0 \rcofib
    \sP_1 \rto\cdots \rto \sP}.\]
  We have a filtration on $\sP$ where the $n$-th filtered piece is the image of
  $\sP_n$ inside $\sP$.
\end{observation}
However, it is unclear whether or not higher dimensional polytopes can induce
relations between lower-dimensional ones, so we do not know whether $\sP_n$ is
the $n$-th graded piece of $\sP$.  We want to compute the associated graded
spectrum of this filtration, and use it to learn about the structure of $\sP$.

First, we consider the analog of Dupont and Sah's approach.  We define an
assembler $\G_{n}$ whose objects are pairs $(U,P)$ where $P$ is an $n$-polytope
in $\R^\infty$ and $U$ is the smallest affine subspace containing $P$.  A
morphism $(U,P) \rto (V,Q)$ is an isometry $\varphi:U \rto V$ such that
$\varphi(P) \subseteq Q$.  The Grothendieck topology is defined by defining a
family 
\[\big\{\varphi_\alpha:(U_\alpha,P_\alpha) \rto (U,P)\big\}_{\alpha\in A}\] to be a
covering family if $P = \bigcup_{\alpha\in A} \varphi_\alpha(P_\alpha)$.

Now we construct the analog of McMullen and Goodwillie's approach.  We define an
assembler $\gG$ to have as objects pairs $(U,P)$, where $P$ is an $n$-polytope
in $\R^\infty$ and $U = \mathrm{span}\, P$.  Morphisms $(U,P) \rto (V,Q)$ of
pairs consist of an isometry $\varphi:U\rto V$ such that $\varphi(P) \subseteq
Q$.  Note that these are almost the same definitions as in the Dupont and Sah
approach; the difference is that here we take all $n$, and that our polytopes
have a slightly different definition, in that they do not necessarily contain
their boundaries.  The Grothendieck topology is defined analogously to the
Grothendieck topology on $\G_{n}$.

\begin{remark}
  Both of these constructions can be done using only simplices, without
  considering general polytopes.  By Theorem~\ref{thm:subassemb} this approach
  gives an equivalent $K$-theory.  However, as the traditional approach uses
  polytopes, we do as well.
\end{remark}

Morphisms in $\gG$ are defined in exactly the same way as the morphisms in
$\G_{n}$, so that we have an inclusion functor $\G_{n} \rto \gG$.  From the
definition of the topologies this is a continuous functor.  However, this is not
a morphism of assemblers as it does not preserve disjointness: given any two
$n$-polytopes with a nonempty measure-$0$ intersection, their pullback (over
their union) is $\initial$ in $\G_{n}$ but not in $\gG$.

Na\"ively we might think that $\gG$ is a product of $\G_{n}$'s for different
$n$, but the above observation about the functor $\G_{n} \rto \gG$ means that
this is not the case.  Even playing around with small values of $n$ shows that
something is different, as in $\gG$ we have an extra invariant on polytopes: the
Euler characteristic.  (For more on this, see \cite{dries98}.)

\begin{proposition} \label{prop:grpoly} Let $\gG_m$ be the full subcategory of
  $\gG$ containing all pairs $(U,P)$ where $P$ is an $i$-polytope with $i\leq
  m$.  This is a subassembler, and we get a sequence
  \[\makeshort[.6]{* = \gG_{-1} \rcofib^{\iota_0} \gG_{0} \rcofib^{\iota_1} \gG_{1} \rcofib^{\iota_2}
    \cdots \rcofib \gG, }\]
  where $*$ is the assembler containing only the initial object.  We have
  \[\hocofib K(\iota_n) \simeq K(\G_{n}).\]
  Thus $\bigvee_{n=0}^\infty K(\G_{n})$ is the associated graded spectrum of
  $K(\gG)$.
\end{proposition}

\begin{proof}
  Since $\gG_{n-1}$ satisfies the conditions of Theorem \ref{thm:cofibercalc}
  it follows that \[\hocofib K(\iota_n) \simeq K(\gG_{n} \bs
  \gG_{n-1}).\]  Therefore it suffices to construct a morphism of assemblers
  $F:\G_n \rto \gG_n \bs \gG_{n-1}$ which induces an equivalence on $K$-theory.
  Let $F':\G_{n} \rto \gG_{n}$ take each pair $(U,P)$ to the pair $(U, \mathring
  P)$, where $\mathring P$ is the open interior of $P$.  This functor preserves
  pullbacks but is not continuous.  We claim that after mapping to $\gG_{n} \bs
  \gG_{n-1}$ this functor induces an equivalence of $K$-theories.
  
  Let $F$ be the composite
  \[\G_n \rto^{F'} \gG_{n} \rto^\pi  \gG_{n}\bs\gG_{n-1},\]
  where $\pi$ is the canonical morphism $\gG_{n} \rto^\pi \gG_{n}\bs\gG_{n-1}$.
  As both $\pi$ and $F'$ preserve disjointness and the initial object, so does
  $F$.  Thus to show that $F$ is a valid morphism of assemblers it suffices to
  show that it preserves covering families.  A covering family in $ \G_n$ is a
  family $\{f_\alpha: P_\alpha \rto P\}_{\alpha\in A}$ such that
  $\bigcup_{\alpha\in A} f_\alpha(P_\alpha) = P$.  For each $\alpha$ we can
  write $P_\alpha = \mathring P_\alpha \cup \partial P_\alpha$, where $\partial
  P_\alpha$ is the boundary of $P_\alpha$.  Note that $\partial P_\alpha
  \subseteq \gG_{n-1}$.  We then have a covering family
  \[\{f_\alpha:\mathring P_\alpha \rto P\}_{\alpha \in A} \cup
  \{f_\alpha: \partial P_\alpha \rto P\}_{\alpha\in A}\] in $\gG_{n}$.  Each
  source in the second half is in $\gG_{n-1}$, so it is killed by $\pi$; thus by
  definition of the topology on $\gG_n \bs \gG_{n-1}$ $\{f_\alpha: \circ
  P_\alpha \rto P\}_{\alpha\in A}$ is a covering family, and we see that $F$
  preserves covering families.
  
  Note that $F$ includes $\G_n$ as a subassembler of $\gG_{n}\bs \gG_{n-1}$.
  The objects of $\gG_{n}\bs \gG_{n-1}$ which are hit by $F$ are exactly those
  objects $(U,P)$ such that $\mathring P = P$.  Any object
  $(U,P)$ has a disjoint covering family
  \[\{(\mathrm{span}(\partial P), \partial P) \rto
  (U,P),(\mathrm{span}\,\mathring P, \mathring P) \rto (U,P)\},\] so the family
  $\{(\mathrm{span}\,\mathring P, \mathring P) \rto (U,P)\}$ is a covering
  family of $(U,P)$ in $\gG_{n}\bs \gG_{n-1}$.  The second one of these is in
  the image of $F$ and we see that $F(\G_n)$ satisfies the condition of
  Theorem~\ref{thm:subassemb}.  Thus $K(\G_n) \simeq K(\gG_{n}\bs \gG_{n-1})$.
\end{proof}

As a corollary we have:

\begin{theorem}
  There is a spectral sequence 
  \[E^1_{p,q} = K_p(\G_q) \Rto K_p(\gG).\]
  If we filter $\gG_{n}$ instead of $\gG$ we get the spectral sequence in
  Theorem~\ref{thm:scissors}. 
\end{theorem}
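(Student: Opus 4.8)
The plan is to deduce this spectral sequence purely formally from Proposition~\ref{prop:grpoly}, which has already established the tower
\[* = \gG_{-1} \rcofib^{\iota_0} \gG_{0} \rcofib^{\iota_1} \gG_{1} \rcofib^{\iota_2} \cdots \rcofib \gG\]
together with the identification $\hocofib K(\iota_n) \simeq K(\G_n)$ for each $n$. Applying $K$ gives a tower of spectra
\[* = K(\gG_{-1}) \rto K(\gG_0) \rto K(\gG_1) \rto \cdots \rto K(\gG)\]
whose $n$-th layer is $K(\G_n)$. First I would invoke the standard construction of the spectral sequence of a tower of spectra (an exact-couple argument, or equivalently the filtered-spectrum spectral sequence): the long exact sequences in homotopy groups for the cofiber sequences $K(\gG_{n-1}) \rto K(\gG_n) \rto K(\G_n)$ assemble into an exact couple with $E^1_{p,q} = \pi_p \hocofib K(\iota_q) = \pi_p K(\G_q) = K_p(\G_q)$ and $d^1$ induced by the connecting maps.

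Next I would address convergence. Since $\hocolim_n K(\gG_n)$ should be $K(\gG)$, I need to know the tower exhausts $K(\gG)$. This is where a small amount of care is required: $\gG = \bigcup_n \gG_n$ as categories, and $K$ (or rather $K$ of a constant simplicial assembler, realized as $|\W(S^k \smash \gG)|$) commutes with the relevant filtered colimit because every object and every morphism and every covering family in $\gG$ involves only finitely many polytopes of bounded dimension, hence lies in some $\gG_n$. Thus $K(\gG) \simeq \hocolim_n K(\gG_n)$, the filtration is exhaustive, and since each $\gG_{-1} = *$ gives $K(\gG_{-1}) = *$ the filtration is also bounded below in the appropriate sense; so the spectral sequence converges conditionally to $K_p(\gG)$, and in fact strongly in each degree because in a fixed total degree only finitely many layers contribute to $\pi_p K(\gG_n)$ stabilizing.

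For the second sentence of the statement, the same argument applied to the \emph{finite} tower $* = \gG_{-1} \rcofib \gG_0 \rcofib \cdots \rcofib \gG_n$ (stopping at stage $n$) yields a spectral sequence with $E^1_{p,q} = K_p(\G_q)$ for $q \le n$ converging to $K_p(\gG_n)$, and convergence here is immediate since the tower is finite; this is exactly the spectral sequence of Theorem~\ref{thm:scissors} once one records (as already done in Proposition~\ref{prop:grpoly} and the surrounding discussion) that $\pi_0 K(\G_q)$ is the Dupont--Sah group and $\pi_0 K(\gG_n)$ is Goodwillie's group $\sP_n$. The only genuine obstacle is the convergence bookkeeping for the infinite tower — identifying $K(\gG)$ with the homotopy colimit of the $K(\gG_n)$ and checking that $\lim^1$ terms vanish degreewise — but this follows from the compactness observation above, so the argument is essentially routine.
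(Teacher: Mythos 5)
Your proposal is correct and takes essentially the same route as the paper, whose proof is simply the one-line observation that these are the spectral sequences of the filtered spectra $K(\gG_0)\rto K(\gG_1)\rto\cdots\rto K(\gG)$ and $K(\gG_0)\rto\cdots\rto K(\gG_n)$, with layers identified via Proposition~\ref{prop:grpoly}. Your additional remarks on exhaustiveness and convergence --- the compactness observation that $K(\gG)\simeq\hocolim_n K(\gG_n)$ because every object of $\W(S^k\smash\gG)$ involves only finitely many polytopes of bounded dimension --- are left implicit in the paper and are a useful thing to spell out.
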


\begin{proof}
  These are the spectral sequences associated to the filtered spectra
  \[K(\gG_{0}) \rto K(\gG_{1}) \rto \cdots \rto K(\gG)\]
  and
  \[K(\gG_{0}) \rto K(\gG_{1}) \rto \cdots \rto K(\gG_{n}).\]
\end{proof}

\section{The cofiber theorem} \label{sec:cofiber}

Our goal in this section is to prove Theorem~\ref{thm:cofiber}.

\begin{theorem:cofiber}
  Let $g:\D_\dot\rto \C_\dot$ be a morphism of
  simplicial assemblers.  There exists a simplicial assembler $(\C/g)_\dot$ with
  a morphism of simplicial assemblers $\iota:\C_\dot \rto (\C/g)_\dot$ such
  that the sequence
  \[K(\D_\dot) \rto^{K(g)} K(\C_\dot) \rto^{K(\iota)} K((\C/g)_\dot)\] is a
  cofiber sequence.
\end{theorem:cofiber}

\begin{definition}
  Suppose that we are given a morphism of simplicial assemblers $g:\D_\dot \rto
  \C_\dot$.  We define the simplicial assembler $(\C/g)_\dot$ by
  \[(\C/g)_n = \C_n \vee ((S^1)_n \wedge \D_n).\] 
  As $\vee$ is the coproduct in $\ASB$, in order to define the face and
  degeneracy maps it suffices to construct them on each component separately.
  On $\C_n$, the face and degeneracy maps are induced by the simplicial
  structure on $\C_\dot$; on $(S^1)_n \wedge \D_n$ all structure maps other than
  $d_0$ are induced by the simplicial structure maps of $S^1$ and $\D_\dot$.  
  Recall that $\ps{n} = \{*,1,\ldots,n\}$.  Note that $(S^1)_n \cong \ps{n} \cong
\ps{1} \vee (\ps{n-1})$.  We define $d_0$ to be the composite
  \[\begin{array}{l}
    C_n \vee (S^1)_n\wedge \D_n \rto^{\cong} C_n \vee (\ps{1}\vee (S^1)_{n-1})
    \wedge \C_n \vee \D_n \\
    \qquad \rto^{\cong} C_n \vee \D_n \vee
    ((S^1)_{n-1} \wedge \D_n) \ms[1.9]{\rto^{gd_0 \vee (1\wedge d_0)} \C_{n-1} \vee ((S^1)_{n-1}
      \wedge \D_{n-1})}.
  \end{array}\]
\end{definition}

The simplicial assembler $(\C/g)_\dot$ comes with a natural inclusion
$\iota:\C_\dot \rto (\C/g)_\dot$ and a natural projection $\pi_\D:(\C/g)_\dot
\rto S^1\wedge \D_\dot$.  Theorem~\ref{thm:cofiber} states that 
\[K(\D_\dot) \rto^{K(g)} K(\C_\dot) \rto^{K(\iota)} K((\C/g)_\dot)\]
is a cofiber sequence.

We prove this using an approach analogous to the one used by Waldhausen for
\cite[Propositions 1.5.5, 1.5.6]{waldhausen83}.  However, as the structure of
our spectra is much simpler than his, the proof turns out to be much more basic.
The key point is the additivity theorem: all we need is the two-term case of
Proposition~\ref{prop:Wprop}(3) to show that $\W(\C\vee\C)$ is equivalent to
$\W(\C)^2$.

Most of the work of the proof is contained in the following lemma:
\begin{lemma} \label{lem:cofiber-fibseq}
  For all $k\geq 1$, the sequence
  \[
    K(\C_\dot)_k \rto K((\C/g)_\dot)_k \rto K(S^1\wedge \D_\dot)_k
    \]
  is a homotopy fiber sequence.
\end{lemma}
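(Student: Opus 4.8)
The plan is to exhibit, for each fixed $k\geq 1$, the sequence in question as (levelwise in the simplicial direction) a sequence of nerves of categories $\W(S^k\smash \C_\dot) \to \W(S^k\smash (\C/g)_\dot) \to \W(S^k \smash S^1 \smash \D_\dot)$, and then apply a form of the additivity theorem built out of Proposition~\ref{prop:Wprop}(3). The key structural fact is that $(\C/g)_n = \C_n \vee ((S^1)_n\smash \D_n)$ is a coproduct of assemblers, so by Proposition~\ref{prop:Wprop}(3), after smashing with $S^k$ and applying $\W$, we get $\W(S^k\smash(\C/g)_\dot) \cong \W(S^k\smash\C_\dot) \times \W(S^k\smash S^1\smash\D_\dot)$ as simplicial categories. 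Thus the middle term is literally a product, the first map is the inclusion of the first factor (up to the basepoint), and the second map is the projection onto the second factor. A product projection $\mathcal{A}\times\mathcal{B} \to \mathcal{B}$ is a fibration of simplicial sets after realization with fiber $\mathcal{A}$, so the sequence is a homotopy fiber sequence; I would phrase this via the fact that $|\W(S^k\smash(\C/g)_\dot)| \simeq |\W(S^k\smash\C_\dot)|\times|\W(S^k\smash S^1\smash\D_\dot)|$ and that a projection off a product is a quasifibration.

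The one subtlety is that the wedge decomposition $(\C/g)_n = \C_n\vee((S^1)_n\smash\D_n)$ is \emph{not} compatible with the face map $d_0$: the definition of $d_0$ on the $(S^1)_n\smash\D_n$ summand mixes in the component $gd_0:\D_n\to\C_{n-1}$. So the identification with a product of simplicial categories fails at the level of simplicial objects; it only holds in each simplicial degree $n$ separately. The fix is the standard one for this kind of ``split cofiber sequence'' argument: one does not need the middle term to be a product of simplicial categories on the nose, only that in each degree $n$ the sequence $\W(S^k_m\smash\C_n) \to \W(S^k_m\smash(\C/g)_n) \to \W(S^k_m\smash S^1_m\smash\D_n)$ splits compatibly with enough structure. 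Concretely, the projection $\pi_\D:(\C/g)_\dot \to S^1\smash\D_\dot$ is a genuine map of simplicial assemblers (the $d_0$-twist is built precisely so that $\pi_\D$ commutes with $d_0$), and the inclusion $\iota$ is too; what one checks is that after applying $\W$, taking nerves, and realizing, the map $|\W(S^k\smash(\C/g)_\dot)| \to |\W(S^k\smash S^1\smash\D_\dot)|$ is a quasifibration with fiber $|\W(S^k\smash\C_\dot)|$. For this I would use a ``gluing/realization lemma'': a map of bisimplicial sets which is, in each simplicial degree, a projection off a product (hence a fibration with constant-up-to-the-other-structure fiber) realizes to a quasifibration — this is exactly the mechanism Waldhausen uses for his additivity fiber sequence \cite[Prop.~1.5.5]{waldhausen83}, and it goes through here because Proposition~\ref{prop:Wprop}(3) gives the degreewise product splitting.

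So the steps, in order, are: (1) reduce the claim about $K(-)_k$ to a claim about the realizations $|\W(S^k\smash -)|$ via the explicit bispectral model from Definition~\ref{def:redK} and the surrounding remark; (2) in each simplicial degree $n$, invoke Proposition~\ref{prop:Wprop}(3) to identify $\W(S^k_m\smash(\C/g)_n)$ with $\W(S^k_m\smash\C_n)\times\W(S^k_m\smash S^1_m\smash\D_n)$, with $\iota$ the inclusion of the first factor and $\pi_\D$ the projection onto the second; (3) conclude that in each degree the sequence is a fiber sequence (product projection), with the fiber over the basepoint being exactly $\W(S^k\smash\C_n)$ because smashing with the basepoint of $S^1$ kills the $\D$-summand; (4) apply the realization lemma for bisimplicial sets to upgrade the degreewise fiber sequences to a homotopy fiber sequence after diagonal realization, checking the hypothesis (the fibers are ``the same'' compatibly — here automatic since the first factor $\W(S^k\smash\C_\dot)$ is itself a simplicial category mapping in compatibly with all face and degeneracy maps).

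The main obstacle is step (4): making precise that the $d_0$-twist does not destroy the fiber-sequence property. The cleanest way around it is to observe that the fiber inclusion $\W(S^k\smash\C_\dot)\hookrightarrow\W(S^k\smash(\C/g)_\dot)$ is a map of \emph{simplicial} categories (it is just $\W(S^k\smash\iota)$, and $\iota$ is a map of simplicial assemblers), so the fiber is constant in the appropriate sense and the realization lemma applies with no extra hypotheses beyond the degreewise product splitting from Proposition~\ref{prop:Wprop}(3). In other words, the $d_0$-twist only rearranges how the base $S^1\smash\D_\dot$ sits inside the total object; it does not touch the sub-simplicial-assembler $\C_\dot$, which remains a genuine simplicial fiber, and that is all the realization lemma needs. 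I expect the remaining verifications — that $\iota$ and $\pi_\D$ really are simplicial maps, and that the degreewise splitting is natural in the simplicial direction for the face maps other than $d_0$ and for all degeneracies — to be routine bookkeeping from the definitions.
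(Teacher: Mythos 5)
Your overall architecture is right — the degreewise product splitting from Proposition~\ref{prop:Wprop}(3), the observation that $\iota$ and $\pi_\D$ are simplicial while the splitting is not compatible with $d_0$, and the appeal to a realization lemma for fiber sequences of bisimplicial objects are all exactly the moves the paper makes (it cites a trisimplicial version of \cite[Lemma 5.2]{waldhausen78}).

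However, there is a genuine gap at your step (4). You assert that the realization lemma ``applies with no extra hypotheses beyond the degreewise product splitting'' because the fiber $\W(S^k\smash\C_\dot)$ is a simplicial subobject. That is not the actual hypothesis of the lemma. Realizing a degreewise fiber sequence of (bi)simplicial objects does \emph{not} in general produce a homotopy fiber sequence, even when the fiber is a genuine simplicial subobject; one needs a condition on the \emph{base}, not on the fiber. In Waldhausen's version this takes the form of degreewise connectivity of the base (more generally, a $\pi_*$-Kan-type condition in the Bousfield--Friedlander framework). The paper's proof explicitly peels one $S^1$ factor off $S^k$, regroups the terms as a trisimplicial object with indices $m$ (the peeled-off $S^1$) and $n$ (everything else), and then checks, for each fixed $n$, that $\big|[m]\mapsto \W(X\smash (S^1)_m \smash (S^1)_n\smash \D_n)\big|$ is connected — which holds because $(S^1)_0 = \{*\}$ reduces the $0$-simplices to the trivial category. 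This is precisely where $k\geq 1$ enters: with no $S^1$ factor to peel off, the base is not degreewise connected and the conclusion fails, as the paper's own remark warns (``at level $0$ the sequence does not have to be a homotopy fiber sequence''). As written, your argument never uses $k\geq 1$ and would ``prove'' the false $k=0$ case too; that is a good internal signal that something essential is missing. To fix the proposal you need to (a) separate an $S^1$ factor into its own simplicial direction, (b) verify the base-connectivity condition of the realization lemma at each fixed level, and (c) also check the easy conditions that the composite is constant and that the degreewise sequences are fiber sequences (your step (2)/(3) handles the latter).
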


\begin{proof}
  Each term in this sequence is the geometric realization of a $k$-simplicial
  category.  As the geometric realization is a diagonal, we can turn this into a
  sequence of bisimplicial sets by taking a partial diagonal.  By choosing this
  partial diagonal appropriately, we get a sequence of bisimplicial sets which
  at level $(m,n)$ the sequence is equal to
  \begin{align*}
  \W((S^{k-1})_n \wedge (S^1)_m \wedge \C_n) &\rto \W((S^{k-1}_n) \wedge ((S^1)_m) \wedge
  (\C_n \vee ((S^1)_n\wedge \D_n))) \\ & \rto \W((S^{k-1})_n \wedge (S^1)_m \wedge
  ((S^1)_n \wedge \D_n)).
\end{align*}
By generalizing \cite[Lemma 5.2]{waldhausen78} to trisimplicial sets, to prove
the lemma it suffices to show that:
\begin{itemize}
\item[(1)] the composition is constant,
\item[(2)] for all $n$ and all pointed sets $X$, $\big|[m] \rgoesto \W(X \wedge
  (S^1)_m \wedge (S^1)_n \wedge \D_n)\big|$ is connected, and
\item[(3)] for all $n$ and all pointed sets $X$,
  \begin{align*}
    &\makeshort[1.2em]{\Big|[m]\rgoesto \W(X \wedge (S^1)_m \wedge \C_n)\Big|} \\
    &\qquad\rto \makeshort[1.2em]{\Big|[m] \rgoesto \W(X \wedge (S^1)_m \wedge (\C_n
      \vee ((S^1)_n\wedge \D_n)))\Big|} \\ &\qquad\rto \makeshort[1.2em]{\Big|[m]\rgoesto
      \W(X \wedge (S^1)_m \wedge (S^1)_n \wedge \D_n})\Big|
  \end{align*}
  is a homotopy fiber sequence.
\end{itemize}
In (2) and (3), we really only care about $X = (S^{k-1})_n$.  (1) is clear
because the map of simplicial assemblers composes to a constant map.  (2)
follows because the $0$-simplices arise from the trivial category.  Write $\C' =
X\wedge \C_n$ and $\D' = X\wedge (S^1)_n\wedge \D_n$.  There exists a functor
\[\W((S^1)_m\wedge (\C'\vee \D')) \rto \W((S^1)_m\wedge \C')\times \W((S^1)_m
\wedge \D')\]
which takes an object $\SCob{a}{i}$ to the pair
\[\Big( \{A_i\}_{i\in \{i\in I\,|\, A_i\in (S^1)_m \wedge \C'\}},
\{A_i\}_{i\in \{i\in I\,|\, A_i\in (S^1)_m \wedge \D'\}}\Big);\] this is an
equivalence of categories by Proposition~\ref{prop:Wprop}.
This functor fits into a commutative diagram
\begin{diagram}[2em]
  {\W((S^1)_m \wedge \C') & \W((S^1)_m \wedge (\C' \vee
    \D')) & \W((S^1)_m \wedge \D') \\
    \W((S^1)_m \wedge \C') & \W((S^1)_m \wedge \C')\times \W(
    (S^1)_m \wedge \D') & \W((S^1)_m \wedge \D') \\};
  \eq{1-1}{2-1} \eq{1-3}{2-3} 
  \to{1-1.mid east}{1-2.mid west}^{\W(\iota)}
  \to{2-1.mid east}{2-2.mid west}
  \to{2-2.mid east}{2-3.mid west}^{\pi_2}
  \to{1-2}{2-2}^\simeq 
  \to{1-2.mid east}{1-3.mid west}^{\W(\pi_\D)}
\end{diagram}
where the bottom row is a fiber sequence.  The morphisms in the diagram also
commute with the simplicial structure maps, and thus assemble into a diagram of
simplicial categories.  Note that it is very important in this context that we
are fixing $n$, as if $n$ varies then the middle morphism does not commute with
$d_0$.  Upon taking geometric realization, the middle map turns into a weak
equivalence of simplicial spaces.  As the bottom row stays a fiber sequence even
after geometric realization, the top row is a homotopy fiber sequence.
\end{proof}

We are now ready to prove Theorem~\ref{thm:cofiber}:
\begin{proof}[Proof of Theorem~\ref{thm:cofiber}]
  Consider the commutative diagram
  \begin{diagram}
    { \D_\dot & (\D/1)_\dot & S^1\wedge \D_\dot \\
      \C_\dot & (\C/g)_\dot & S^1\wedge \D_\dot \\};
    \arrowsquare{\iota}{g}{g}{\iota}
    \eq{1-3}{2-3} \to{1-2}{1-3}^{\pi_\D} \to{2-2}{2-3}^{\pi_\D}
  \end{diagram}
  After applying $K$, the top and bottom are levelwise homotopy fiber sequences
  by Lemma~\ref{lem:cofiber-fibseq}, and thus homotopy fiber sequences of
  spectra.  The right-hand map is the identity, so the left-hand square is a
  homotopy pullback square.  $K((\D/1)_\dot)$ is obtained from a simplicial
  shift of $K(S^1\wedge \D)$, so it is contractible.  Thus the composition
  around the bottom is a homotopy fiber sequence.
\end{proof}

This has the following corollary:

\begin{corollary} \label{cor:omega-spectrum}
  $K(\C_\dot)$ is an $\Omega$-spectrum above level $0$.
\end{corollary}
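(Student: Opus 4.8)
The plan is to read this off Lemma~\ref{lem:cofiber-fibseq} applied to an identity map. To say that $K(\C_\dot)$ is an $\Omega$-spectrum above level $0$ is to say that for each $k\geq 1$ the adjoint of the $k$-th spectral structure map, a map $K(\C_\dot)_k \rto \Omega K(\C_\dot)_{k+1}$, is a weak equivalence of spaces, so this is what I would establish. The starting point is Lemma~\ref{lem:cofiber-fibseq} with $g$ taken to be the identity $\C_\dot \rto \C_\dot$ (and $\D_\dot = \C_\dot$), which produces, for each $k\geq 1$, a homotopy fiber sequence
\[K(\C_\dot)_k \rto K((\C/1)_\dot)_k \rto K(S^1\smash \C_\dot)_k.\]

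Two identifications then do all the work. On the right, $K(S^1\smash\C_\dot)_k = \big|\W(S^k\smash S^1\smash\C_\dot)\big| = \big|\W(S^{k+1}\smash\C_\dot)\big| = K(\C_\dot)_{k+1}$, and I would check, by unwinding the definition of $(\C/1)_\dot$ and of the spectral structure maps in Definition~\ref{def:redK}, that under this identification the looping of the projection $K((\C/1)_\dot)_k \rto K(S^1\smash\C_\dot)_k$ is exactly the adjoint structure map in question (equivalently, the projection's desuspension is the assembly map $S^1\smash K(\C_\dot)_k \rto K(\C_\dot)_{k+1}$). In the middle, $(\C/1)_\dot$ is the mapping cone of the identity, i.e.\ the reduced simplicial cone on $\C_\dot$: it is isomorphic to $(\Delta^1/\{0\})\smash\C_\dot$, and the simplicial contraction of $\Delta^1/\{0\}$ onto its basepoint smashes with $S^k\smash\C_\dot$ to give a simplicial contraction of $S^k\smash(\C/1)_\dot$, so $K((\C/1)_\dot)_k$ is weakly contractible for \emph{every} $k$. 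This is just the ``simplicial shift'' contractibility already invoked in the proof of Theorem~\ref{thm:cofiber}, now applied one spectrum level at a time.

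Combining the two, for each $k\geq 1$ the fiber sequence exhibits $K(\C_\dot)_k$ as the homotopy fiber of a map from a weakly contractible space to $K(\C_\dot)_{k+1}$, so the induced map $K(\C_\dot)_k \rto \Omega K(\C_\dot)_{k+1}$ is a weak equivalence; by the first identification this induced map is the adjoint structure map, which is precisely the assertion. I expect the main obstacle to be exactly that first identification: one must verify that the map coming out of Lemma~\ref{lem:cofiber-fibseq} is (homotopic to) the spectral structure map of $K(\C_\dot)$, rather than merely \emph{some} equivalence $K(\C_\dot)_k\simeq\Omega K(\C_\dot)_{k+1}$. This is the analogue in our setting of the bookkeeping in Waldhausen's proof that algebraic $K$-theory forms an $\Omega$-spectrum, but it should be considerably lighter here because the suspension $S^1\smash\C_\dot$ and the cone $(\C/1)_\dot$ are literal smash products and the structure maps are literal assembly maps; concretely, I would trace the maps $\iota:\C_\dot\rto(\C/1)_\dot$ and $\pi_\D:(\C/1)_\dot\rto S^1\smash\C_\dot$ through the proof of Lemma~\ref{lem:cofiber-fibseq} and compare with Definition~\ref{def:redK}.
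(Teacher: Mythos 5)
Your approach is, in substance, the argument the paper intends, and it is in fact more careful than what appears in the text. The proof in the paper as written applies Lemma~\ref{lem:cofiber-fibseq} to the map $p:\C_\dot\rto *$; with $p$ in the role of $g:\D_\dot\rto\C_\dot$ one has $\D_\dot=\C_\dot$ and the lemma's target assembler equal to $*$, so the fiber sequence it produces is
$K(*)_k \rto K((*/p)_\dot)_k \rto K(S^1\smash\C_\dot)_k$, which unwinds to $*\rto K(\C_\dot)_{k+1}\rto K(\C_\dot)_{k+1}$ with the right-hand map an isomorphism; that is vacuous and does not give the corollary. The choice you make, $g=1_{\C_\dot}$, is the one that works: the middle term $K((\C/1)_\dot)_k$ is contractible by exactly the simplicial-cone argument you describe (this is also the observation invoked inside the proof of Theorem~\ref{thm:cofiber}, where $(\D/1)_\dot$ is declared contractible), and the right term is $K(\C_\dot)_{k+1}$, giving $K(\C_\dot)_k\simeq\Omega K(\C_\dot)_{k+1}$. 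You also flag a genuine gap that the paper silently elides: to conclude ``$\Omega$-spectrum'' in the strict sense one must identify the equivalence arising from the fiber sequence with the adjoint of the spectral structure map, not merely produce \emph{some} equivalence $K(\C_\dot)_k\simeq\Omega K(\C_\dot)_{k+1}$. Your plan for checking this — tracing $\iota$ and $\pi_\D$ against Definition~\ref{def:redK} — is the right one, and this verification should indeed be included to make the corollary airtight.
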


\begin{proof}
  Let $*$ be the assembler with no non-initial objects.  Consider the functor
  $p:\C_\dot \rto *$ which sends all objects of every level of $\C_\dot$ to the
  initial object.  Then $(*/p)_\dot$ has as its $n$-th level $(S^1)_n\wedge \C_n$
  and so $K((*/p)_\dot)_k \cong K(\C_\dot)_{k+1}$.  Therefore by
  Lemma~\ref{lem:cofiber-fibseq} we get a homotopy fiber sequence $K(\C_\dot)_k
  \rto * \rto K(\C_\dot)_{k+1}$.  Thus we get an equivalence $K(\C_\dot)_k
  \simeq \Omega K(\C_\dot)_{k+1}$.
\end{proof}

\section{Proof of Theorem~\ref{thm:cofibercalc}} \label{sec:proof2}


Recall Theorem~\ref{thm:cofibercalc}:
\begin{theorem:cofibercalc}
  For an object $A$ of $\C$, we say that
  $\C$ \textsl{has complements for $A$} if any morphism $A \rto B$ is in a
  finite disjoint covering family of $B$.  Let $\D$ be a subassembler of $\C$
  such that $\D$ is a sieve in $\C$ and such that $\C$ has complements for all
  objects of $\D$.  Then
  \[K(\D) \rto K(\C) \rto K(\C\bs\D)\]
  is a cofiber sequence.
\end{theorem:cofibercalc}

We begin by fixing some notation.  Let $i:\D\rcofib \C$ be the inclusion of
assemblers, and $c: \C \rto \C\bs\D$ be the canonical projection.  
There is a morphism of simplicial assemblers $p:(\C/i)_\dot \rto \C\bs
\D$ given by projecting all copies of $\D$ in $(\C/i)_n$ to the initial object,
and using the canonical morphism $c:\C\rto \C\bs\D$.  

Consider the following commutative diagram of simplicial assemblers:
\begin{squisheddiagram}
  {&& (\C/i)_\dot \\  
    \D & \C \\ 
    && \C\bs\D \\};
  \to{2-1}{2-2}^i \to{2-2}{1-3} \to{2-2}{3-3}^c \to{1-3}{3-3}^p
\end{squisheddiagram}
After applying $K$-theory, by Theorem~\ref{thm:cofiber} the composition along
the top becomes a cofiber sequence.  Thus it suffices to show that $p$ induces
an equivalence of spectra to ensure that the composition along the bottom is
also a cofiber sequence.  By Lemma \ref{lem:inductive-step} it suffices to show
that $\W(p):\W((\C/i)_\dot) \rto \W(\C\bs\D)$ is an equivalence after geometric
realization, which is the statement of the following proposition.

\begin{proposition} \label{prop:cofiblevel1} Suppose that the conditions of
  Theorem~\ref{thm:cofibercalc} hold, and let $i:\D\rto \C$ be the inclusion of
  $\D$ into $\C$.  Then
  \[\big|\W(p)\big|:\big|\W((\C/i)_\dot)\big| \rto \big|\W(\C\bs\D)\big|\]
  is a weak equivalence.
\end{proposition}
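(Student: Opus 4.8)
The plan is to build a homotopy inverse to $\bigl|\W(p)\bigr|$ directly, using the complements hypothesis. The key observation is that an object of $\W((\C/i)_n)$ is a finite tuple of objects, each of which is either an object of $\C$ (a "body" piece) or one of the $n$ shifted copies of an object of $\D$ (a "tail" piece), and $\W(p)$ simply discards the tail pieces and applies $c:\C\rto\C\bs\D$ to the body pieces. Since $\D$ is a sieve and $\C$ has complements for all objects of $\D$, I claim that the subcategory of $\W((\C/i)_\dot)$ consisting of tuples with \emph{no} tail pieces and with all body pieces lying in $\ob\C\bs\ob\D$ already maps by a weak equivalence to $\W(\C\bs\D)$, and that the inclusion of this subcategory into $\W((\C/i)_\dot)$ is a weak equivalence after realization. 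The first of these two facts should be essentially a translation of the definition of the assembler structure on $\C\bs\D$ (Definition~\ref{def:CbsD}): a covering family in $\C\bs\D$ is a $\C$-covering family completed by objects of $\D$, and those completing objects are precisely the tail-free $\D$-pieces we are allowed to use.

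For the second fact, which I expect to be the main obstacle, I would use Quillen's Theorem A (in the cofiltered-preorder form of Lemma~\ref{lem:cofiltpre}, exactly as in the proofs of Theorem~\ref{thm:subassemb} and Theorem~\ref{thm:SEp}), applied to the inclusion functor $j$ from the tail-free "$\C\bs\D$-part" into $\W((\C/i)_\dot)$. I would first check that $\W((\C/i)_\dot)$ — or rather each simplicial level, and then the realization — has enough structure that $j/Y$ is a preorder for every $Y$; this follows from Proposition~\ref{prop:Wprop}(1)–(2) applied to the assemblers $(\C/i)_n$, since all the relevant hom-sets are at most singletons after one refines. Then I need to show $j/Y$ is nonempty and cofiltered. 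Nonemptiness is where the complements hypothesis enters: given $Y$, a tuple in $\W((\C/i)_n)$, each tail piece $\W(g)(D)$ for $D\in\D$ can be "covered away" — because $\D$ is a sieve, the piece lives over something, and because $\C$ has complements for objects of $\D$, any morphism from $D$ sits in a finite disjoint covering family, so we can refine $Y$ down to a tuple living entirely in the $\C\bs\D$-part and receiving a map from it. Here one must be careful that the $d_0$ face map of $(\C/i)_\dot$ (which inserts $g$) is compatible with these refinements, so that the cofiltering diagrams can be taken coherently across simplicial levels; this is the delicate bookkeeping step, analogous to the "fixing $n$" remark in the proof of Lemma~\ref{lem:cofiber-fibseq}.

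Cofilteredness of $j/Y$ then follows the same pattern as in Theorem~\ref{thm:subassemb}: given two objects of $j/Y$, complete the span in $\W((\C/i)_n)$ using Proposition~\ref{prop:Wprop}(2), then refine the apex into the $\C\bs\D$-part using the complements argument again, and use that the $\C\bs\D$-part is full to see the refining morphisms land in $j/Y$. Once $|j|$ is shown to be a weak equivalence, composing with the weak equivalence from the $\C\bs\D$-part to $\W(\C\bs\D)$ gives a one-sided inverse to $\bigl|\W(p)\bigr|$, and since $\W(p)\circ j$ is (isomorphic to) the identity-type comparison, a two-out-of-three argument finishes it. I expect the only real subtlety beyond the sieve/complements translation to be verifying that the relevant categories are preorders at the level of the bisimplicial object $|\W((\C/i)_\dot)|$, so that Lemma~\ref{lem:cofiltpre} applies; this should reduce, via Proposition~\ref{prop:Wprop}(1), to checking it levelwise.
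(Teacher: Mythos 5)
The proposal has a genuine gap, and it is fatal: the subcategory you single out is too small. Call it $\W_0$, the (constant, tail-free) full subcategory of $\W(\C)$ on tuples whose entries all lie in $\ob\C\bs\ob\D$. You claim $\W_0\hookrightarrow\W(\C\bs\D)$ is "essentially a translation of Definition~\ref{def:CbsD}," but that definition says precisely that covering families in $\C\bs\D$ are $\C$-families \emph{completable} by $\D$-morphisms, whereas a morphism of $\W_0$ must already be an honest disjoint covering family in $\C$, with the completing $\D$-objects excluded. These are different categories. The paper's own example (segments in $\R$, $\D$ the points) shows the gap concretely: $\{(0,1)\}\rto\{[0,1]\}$ is a morphism of $\W(\C\bs\D)$ (complete by $\{0\}$ and $\{1\}$), but no finite family of segments with no points can cover $[0,1]$ in $\C$, so $\{(0,1)\}$ and $\{[0,1]\}$ lie in distinct path components of $N\W_0$. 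Thus $\W_0\hookrightarrow\W(\C\bs\D)$ is not a weak equivalence even under the theorem's hypotheses, and your two-out-of-three argument cannot close. By the same token, the inclusion $j:\W_0\hookrightarrow\W((\C/i)_\dot)$ also fails: the vertical $d_0$ of $(\C/i)_\dot$ folds a tail $\D$-piece into the body via $g$, so in $|\W((\C/i)_\dot)|$ the objects $\{(0,1)\}$ and $\{[0,1]\}$ get identified (through $\{(0,1),\{0\},\{1\}\}\rto\{[0,1]\}$ in $\W(\C)$), which the constant-in-the-vertical-direction $\W_0$ cannot detect.

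The correct intermediate object is not $\W_0$ but the paper's auxiliary category $\W(\C,\D)$, whose morphisms are families that \emph{complete} to honest disjoint $\C$-covering families by $\D$-objects --- strictly more morphisms than $\W_0$, strictly fewer than $\W(\C\bs\D)$. The paper factors $\W(p)$ as $N\W((\C/i)_\dot)\rto^{p_1}Y_\dot\rto^{p_2}N\W(\C,\D)\rto^{N\iota}N\W(\C\bs\D)$, where $Y_\dot$ is not a nerve but a simplicial set of diagrams in $\W(\C)$ ending in $\W(\C\bs\D)$ with a modified last face. Crucially, $p_1$ is shown to be an equivalence not by Theorem A but by a levelwise extra-degeneracy contraction (Lemma~\ref{lem:discrete-levelwise}); $p_2$ needs the \emph{simplicial} version of Theorem A (Lemma~\ref{lem:simplyconnected}), not the cofiltered-preorder form, since $Y_\dot$ is not a nerve; and the complements hypothesis enters only at the last step, for $\iota$, via Lemma~\ref{lem:cutting-up}. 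Your plan of applying the cofiltered-preorder form of Theorem A to $j$ doesn't parse as stated --- $\W((\C/i)_\dot)$ is a simplicial category, not a category --- and this is exactly where the paper's structure of two separate arguments ($p_1$ and $p_2$) is doing real work that the proposal papers over.
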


The rest of this section is dedicated to the proof of this proposition.  As the
proof is quite long, we start with the setup and an outline of the proof, and
finish the section with a series of lemmas that fill in the details.

Let $P$ be the map $N \W((\C/i)_\dot) \rto N \W(\C\bs\D)$ of bisimplicial sets;
we want to show that it is a weak equivalence.  We assume that the
nerve-direction is horizontal, and the internal $(\C/i)_\dot$-direction is
vertical.  Thus $N \W(\C\bs\D)$ is constant in the vertical direction.  For
conciseness we define $\kappa = \W(c):\W(\C) \rto \W(\C\bs\D)$.

For any subcategory $\C'\subseteq \C$ and any object
$A = \SCob{A}{i}$ in $\W(\C)$ we define
\[A_{\C'} = \{A_i\}_{\{i\in I \,|\, A_i\in \C'\}}.\] 
Observe that while $\C\bs\D$ is not a \textsl{subassembler} of $\C$, it is a
\textsl{subcategory} of $\C$, and we will often be treating it as such.
More generally, for any diagram
\[D = (A_0 \rto^{f_1} A_1 \rto^{f_2} \cdots \rto^{f_n} A_n)\] in $\W(\C)$ we write
\[D_{\C'} = (A_0' \rto A_1' \rto \cdots \rto A_n'),\]
where $A_n' = (A_n)_{\C'}$ and $A_i'$ is the subobject of $A_i$ whose image
is in $A_n'$.  More formally, if we write $A_i = \{A_{ij}\}_{j\in J_i}$ then we
define $A_i' = \{A_{ij}\}_{j\in J_i'}$, with $J_n' = \{j\in J_n \,|, A_{nj}\in
\C'\}$ and $J_i' = f_i^{-1}(J_{i+1}')$.  Note that $D_{\C'}$ is still a diagram
in $\W(\C)$, not $\W(\C')$, since the $A_i$ for $i<n$ may have components which
are not in $\C'$.  Thus for objects $A$, $\kappa(A) = A_{\C\bs\D}$, but this is
not true for morphisms. 

We define
\[Y_n = \big\{(A_0 \rto \cdots \rto A_n)\in N_n\W(\C) \,\big|\, A_n \in
\W(\C\bs\D)\big\}.\] We want to make $Y_\dot$ into a simplicial set. We define
$d_n$ on an $n$-simplex
\[D = (A_0 \rto A_1 \rto \cdots \rto A_n)\] to be
$(A_0\rto \cdots \rto A_{n-1})_{\C\bs \D}$; we let the other face maps and all
degeneracies be inherited from $N \W(\C)$.  This is a well-defined
simplicial set because $\D$ is a sieve in $\C$.  

Define the category $\W(\C,\D)$ to have $\ob \W(\C,\D) = \ob \W(\C\bs\D)$.
Define a morphism
\[f:\SCob{A}{i} \rto \SCob{B}{j} \in \W(\C,\D)\] to be a map of sets $f:I \rto
J$ together with morphisms $f_i:A_i \rto B_{f(i)}$ for $i\in I$ such that
for all $j\in J$ there exists a family $\{D_k \rto B_j\}_{k\in K}$ of morphisms
in $\C$ with $D_k\in \D$ such that 
\[\{f_i:A_i \rto B_j\}_{i\in f^{-1}(j)} \cup \{D_k \rto B_k\}_{k\in K}\] is a
finite disjoint covering family in $\C$.  Then $\W(\C,\D)$ is a subcategory of
$\W(\C\bs \D)$ with inclusion functor
\[I: \W(\C,\D) \rto \W(\C\bs\D).\]

\begin{example}
  Let $\C$ be the assembler whose objects are open, half-open and closed
  segments in $\R$, together with the points of $\R$, and whose morphisms are
  compositions of translations and inclusions.  Let $\D$ be the subassembler
  containing only points.  Then $\C\bs\D$ has as its objects all segments of
  nonzero length, and the covering families in $\W(\C\bs\D)$ are families
  $\{f_i:A_i \rto B\}$ such that $f_i(A_i) \cap f_j(A_j)$ is a finite set.  On
  the other hand, morphisms of $\W(\C,\D)$ consist of those covering families
  whose endpoints do not intersect at all.  Even though these categories are
  different, we will prove in Section~\ref{sec:iota} that they are homotopy
  equivalent.
\end{example}

\begin{example}
  Suppose that $\C$ is the following preorder
  \begin{squisheddiagram}
    { & & B\\
      \initial & A & & D \\
      & & C\\};
    \to{2-1}{2-2} \to{2-2}{1-3} \to{2-2}{3-3} \to{1-3}{2-4} \to{3-3}{2-4}
  \end{squisheddiagram}
  with the only nontrivial covering family being $\{B \rto D, C\rto D\}$.
  Suppose that $\D$ is the full subcategory with $\ob \D = \{\initial, A\}$.
  Then $\W(\C\bs \D)$ has objects triples of integers $(b,c,d)$ which count how
  many copies of $B$, $C$ and $D$ (respectively) the object contains.  There
  exist morphisms $(b,c,d) \rto (b-k,c-k,d+k)$ for all $k \leq \min(b,c)$.
  However, no covering family with objects in $\C\bs\D$ has a completion to a
  finite disjoint covering family in $\C$, so $\W(\C,\D)$ is the maximal
  subgroupoid of $\W(\C\bs \D)$.  In this case, $I$ is not a homotopy
  equivalence.
\end{example}



Recall that $P = N\W(p)$.  By considering $Y_\dot$, $N\W(\C,\D)$ and
$N\W(\C\bs\D)$ as bisimplicial sets which are constant in the vertical
direction, we can factor $P$ as
\[N \W((\C/i)_\dot) \rto^{p_1} Y_\dot \rto^{\kappa} N \W(\C,\D)
\rto^{NI} N \W(\C\bs\D),\] where $p_1$ is defined on a simplex in
$N_m\W((\C/i)_0)$ by
\begin{align*}
  \makeshort{p_1(A_0 \rto\cdots\rto A_m)} &= \makeshort{(A_0 \rto \cdots \rto
    A_m)_{\C\bs\D}}, 
\end{align*}
and defined on simplices in $N_m\W((\C/i)_n)$ by $p_1\circ d_0^n$.  Here, the
use of $\kappa$ is a slight abuse of notation, as $\kappa$ is a functor and not
a morphism of simplicial sets; however, as the map we want applies $\kappa$ to
each $n$-simplex, we use it for clarity.  Thus it suffices to show that $p_1$,
$\kappa$ and $I$ are weak equivalences after geometric realization; this is
shown in Sections~\ref{sec:p_1},~\ref{sec:p_2} and~\ref{sec:iota}, respectively.

Theorem~\ref{thm:cofibercalc} has two conditions: that $\D$ is a sieve in $\C$
and that $\C$ has all complements for all objects in $\D$.  It turns out that
the second condition is only used to prove that $NI$ is a homotopy
equivalence, so we see that the map
\[|\W((\C/i)_\dot)| \rto |\W(\C,\D)|\]
is always a homotopy equivalence when $\D$ is a sieve in $\C$.  

\subsection{The map $p_1$} \label{sec:p_1}

First we consider $p_1$.  We begin with some technical results.

\begin{lemma} \label{lem:splituniondiag}
  Write 
  \begin{align*}
    C &= (A_0 \rto \cdots \rto A_n) \in N_n\W(\C), \\
    D &= (B_0 \rto \cdots \rto B_n) \in N_n\W(\D),
  \end{align*}
  and assume that for all $i$, the indexing sets of $A_i$ and $B_i$ are
  disjoint.  We write $C\cup D$ for the diagram 
  \[A_0\cup B_0 \rto A_1\cup B_1\rto\cdots \rto A_n \cup B_n,\] where we use
  $\cup$ instead of $\sqcup$ to emphasize that we only need to take simple
  unions of the indexing sets to form the coproduct.  Then we have
  \[C = C_{\C\bs\D} \cup C_\D \qquad (C\cup D)_{\C\bs\D} = C_{\C\bs\D} \qquad
  (C\cup D)_\D = C_\D\cup D.\]
\end{lemma}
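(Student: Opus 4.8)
The plan is to reduce all three identities to elementary bookkeeping with indexing sets. The one structural input is that $\D$ is a sieve, together with the tautology that every noninitial object of $\C$ lies in exactly one of the full subcategories $\D$ and $\C\bs\D$. Write $C = (A_0 \rto^{f_1} \cdots \rto^{f_n} A_n)$ with $A_i = \{A_{ij}\}_{j\in J_i}$, let $\bar f_i:J_{i-1}\to J_i$ be the map of indexing sets underlying $f_i$, and let $\bar f_{i,n} = \bar f_n\circ\cdots\circ\bar f_{i+1}:J_i\to J_n$ be the composite to the top level. Unwinding the definition of $D_{\C'}$ recalled just above, for any full subcategory $\C'$ of $\C$ the diagram $C_{\C'}$ is obtained from $C$ by restricting, at each level $i$, to the indexing subset $J_i^{\C'} \defeq \bar f_{i,n}^{-1}(\{j\in J_n \mid A_{nj}\in\C'\})$, keeping the maps inherited from $C$; the only delicate point here is the index shift between ``$A_i'$ is the subobject whose image lies in $A_n'$'' and the inductive clause $J_i' = \bar f_{i+1}^{-1}(J_{i+1}')$. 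From this description it is immediate that $C_{\C'}$ and $C_{\C''}$ have disjoint levelwise indexing sets whenever $\C'$ and $\C''$ are disjoint on objects, and that $C_{\C'}\cup C_{\C''} = C_{\C'\cup\C''}$.

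The first identity is then the case $\C' = \C\bs\D$, $\C'' = \D$: the sets $\{j\mid A_{nj}\in\C\bs\D\}$ and $\{j\mid A_{nj}\in\D\}$ partition $J_n$, so their $\bar f_{i,n}$-preimages partition each $J_i$ compatibly with the $f_i$. I will note in passing that the sieve hypothesis is exactly what makes $C_\D$ land in $\W(\D)$: if $\bar f_{i,n}(j) = j'$ with $A_{nj'}\in\D$, the morphism $A_{ij}\to A_{nj'}$ of $\C$ forces $A_{ij}\in\D$. For the remaining two identities, form $C\cup D$ with $D = (B_0\rto\cdots\rto B_n)$, $B_i = \{B_{ij}\}_{j\in K_i}$ and $K_i\cap J_i = \eset$. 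Its level-$i$ indexing set is $J_i\sqcup K_i$ and its structure maps are $\bar f_i\sqcup\bar g_i$ (with $\bar g_i$ underlying the $i$-th arrow of $D$), so the composite to the top is $\bar f_{i,n}\sqcup\bar g_{i,n}$, carrying $J_i$ into $J_n$ and $K_i$ into $K_n$. Since $D\in\W(\D)$, all components of $B_n$ lie in $\D$, so at the top level $(C\cup D)_{\C\bs\D}$ selects $\{j\in J_n\mid A_{nj}\in\C\bs\D\}$ and $(C\cup D)_\D$ selects $\{j\in J_n\mid A_{nj}\in\D\}\sqcup K_n$. Pulling these two subsets of $J_n\sqcup K_n$ back along $\bar f_{i,n}\sqcup\bar g_{i,n}$, and using that this map respects the two summands (so the preimage of a subset of $J_n$ stays inside $J_i$ and agrees with its $\bar f_{i,n}$-preimage, while the preimage of $K_n$ is all of $K_i$, using $D_\D = D$), gives that $(C\cup D)_{\C\bs\D}$ has level-$i$ indexing set $J_i^{\C\bs\D}$ and $(C\cup D)_\D$ has level-$i$ indexing set $J_i^\D\sqcup K_i$; these are precisely $C_{\C\bs\D}$ and $C_\D\cup D$.

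I do not expect any real obstacle: the statement is purely combinatorial and everything is a rearrangement of the definitions of $(-)_{\C'}$ and of $\cup$. The only genuine ideas are the two applications of the sieve property just mentioned --- that membership in $\D$ at the top level propagates, along the diagram, to membership in $\D$ at every level, so in particular $C_\D$ is an honest diagram in $\W(\D)$ --- and the routine but error-prone care needed to keep the subscript on $f_i$ aligned with the level it acts between.
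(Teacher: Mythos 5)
Your proof is correct and takes the same approach as the paper, which dispatches the lemma with ``this follows directly from the definitions''; you simply spell out the unwinding that the paper leaves implicit. Two points you make are worth highlighting: you correctly read the recursive clause as $J_i' = \bar f_{i+1}^{-1}(J_{i+1}')$ (the paper's displayed formula $J_i' = f_i^{-1}(J_{i+1}')$ has the subscript off by one, since $f_i$ goes from level $i-1$ to level $i$), and you note that the sieve hypothesis is precisely what guarantees $C_\D$ has all its entries in $\D$, which is implicit in treating $C_\D \cup D$ as an operation of the same kind as $C \cup D$.
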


\begin{proof}
  This follows directly from the definitions.  The reason we need the assumption
  on indexing sets is that if the indexing sets are not disjoint then we may end
  up taking different versions of the disjoint union on the two sides of the
  third equality.
\end{proof}

Since $\W(\C\vee\D^{\vee k})$ is a full subcategory of $\W(\C) \times \W(\D)^k$
consisting of those tuples with disjoint indexing sets, we can consider a
diagram $A_0\rto\cdots\rto A_m$ in $\W(\C\vee \D^{\vee k})$ as a $k+1$-tuple of
diagrams $(C,D_1,\ldots,D_k)$ with $C$ in $N_m\W(\C)$ and $D_i$ in $N_mW(\D)$ for all
$i$.  Note that, by definition, for each $j=0,\ldots,k$ the $j$-th objects in
all diagrams have disjoint indexing sets.  In the subsequent proofs we will often be
using this identification.

\begin{lemma} \label{lem:well-defp1}
  The map $p_1:N\W((\C/i)_\dot) \rto Y_\dot$ is a map of bisimplicial sets.
\end{lemma}

\begin{proof}
  The fact that $p_1$ commutes with the horizontal simplicial structure maps
  follows directly from the definitions.
  %
  It remains to check the vertical simplicial direction.  As all vertical
  simplicial structure maps on $Y_\dot$ are trivial, we just need to check that
  applying vertical simplicial structure maps does not affect the image under
  $p_1$.
  Consider $A_0\rto\cdots\rto A_m$ in $N_m\W(\C\vee \D^{\vee k})$ as a $k+1$-tuple
  of diagrams $(C,D_1,\ldots,D_k)$ By Lemma~\ref{lem:splituniondiag}
  $(C,D_1,\ldots,D_k)_{\C\bs\D} = C_{\C\bs\D}$, so we can conclude that $p_1$
  commutes with all of the vertical simplicial structure maps other than $d_0$.
  But (again by Lemma~\ref{lem:splituniondiag})
  \begin{align*}
    p_1d_0(C,D_1,\ldots,D_k) &= p_1(C\cup D_1,\ldots,D_k) = (C\cup D_1)_{\C\bs\D}
      \\ &= C_{\C\bs\D} = p_1(C,D_1,\ldots,D_k),
  \end{align*}
  so $p_1$ commutes with $d_0$ as well.
\end{proof}

We are now ready to prove that $p_1$ is an equivalence after geometric
realization.  We do this by showing that it is a levelwise homotopy
equivalence. 

\begin{lemma} \label{lem:discrete-levelwise} For all $n\geq 0$, the simplicial set
  $X_\dot = N_n \W((\C/i)_\dot)$ is homotopically discrete, and $\pi_0X_\dot =
  Y_n$.  When restricted to $X_\dot$ we have $p_1 = \pi_0$.
\end{lemma}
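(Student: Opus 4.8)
The plan is to describe $X_\dot$ --- a simplicial set in the $(\C/i)_\dot$-variable, with the nerve level $n$ held fixed --- as a disjoint union, indexed by $Y_n$, of contractible simplicial sets, with $p_1$ the projection to the indexing set. Since $(\C/i)_m = \C\vee\D^{\vee m}$, Proposition~\ref{prop:Wprop}(3) lets me write an $n$-chain in $\W((\C/i)_m)$ as a tuple $(C; D_1,\dots,D_m)$, where $C$ is an $n$-chain in $\W(\C)$, each $D_j$ an $n$-chain in $\W(\D)$, and all the indexing sets are disjoint; in these terms $p_1(C; D_1,\dots,D_m) = (C)_{\C\bs\D}$, because each $D_j$ consists entirely of objects of $\D$ and is therefore discarded when we pass to the $\C\bs\D$-part. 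By Lemma~\ref{lem:well-defp1}, $p_1$ commutes with the simplicial structure maps in the $(\C/i)_\dot$-direction, and $Y_\dot$ is constant in that direction, so $p_1$ is a map of simplicial sets $X_\dot\to\underline{Y_n}$ to the constant simplicial set on $Y_n$; hence $X_\dot = \coprod_{y\in Y_n} F^{(y)}_\dot$ where $F^{(y)}_\dot \defeq p_1^{-1}(y)_\dot$. It thus suffices to show that each $F^{(y)}_\dot$ is nonempty and contractible: then $|X_\dot|\simeq\coprod_y \ast = Y_n$, which at once gives that $X_\dot$ is homotopically discrete, that $\pi_0 X_\dot = Y_n$, and that the resulting identification sends the component of $x\in X_0$ to $p_1(x)$.

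Next I identify the fibers. Fix $y = (y_0\to\cdots\to y_n)$ with $y_n\in\W(\C\bs\D)$. Because $\D$ is a sieve, Lemma~\ref{lem:splituniondiag} shows that an $n$-chain $C$ in $\W(\C)$ has $(C)_{\C\bs\D} = y$ exactly when $C = y\sqcup E$ for a unique $n$-chain $E$ in $\W(\D)$ (namely $E = C_\D$, which lies in $\D$ at every level precisely because $\D$ is closed under precomposition). So $F^{(y)}_m$ is identified, up to the evident reindexing, with the set of tuples $(E; D_1,\dots,D_m)$ of $n$-chains in $\W(\D)$. Unwinding the definition of $(\C/i)_\dot$, the simplicial structure in the $m$-variable becomes $d_0(E; D_1,\dots,D_m) = (E\cup D_1;\, D_2,\dots,D_m)$ --- this is where the special face map $d_0$ of $(\C/i)_\dot$, built from $g = i$, enters, since $d_0$ appends $i(D_1)$ to the $\C$-coordinate $y\sqcup E$ and $y\sqcup E\sqcup i(D_1) = y\sqcup(E\cup D_1)$ --- while for $1\le j\le m-1$ the face $d_j$ merges $D_j$ with $D_{j+1}$, $d_m$ forgets $D_m$, and the degeneracies insert an empty chain, these assertions coming from the faces and degeneracies of $S^1 = \Delta^1/\partial\Delta^1$.

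Finally I will exhibit an extra degeneracy on $F^{(y)}_\dot$: the maps $\tilde s\colon F^{(y)}_m\to F^{(y)}_{m+1}$ defined by $\tilde s(E; D_1,\dots,D_m) = (\emptyset;\, E, D_1,\dots,D_m)$. From the formulas above one checks directly that $d_0\tilde s = \mathrm{id}$ (since $\emptyset\cup E = E$), that $d_{j+1}\tilde s = \tilde s\, d_j$ for $j\ge 0$ (the case $j = 0$ being the one in which merging the first two $\D$-slots reproduces the face map $d_0$ on the base), and that $s_{j+1}\tilde s = \tilde s\, s_j$; an extra degeneracy provides a simplicial contraction of $F^{(y)}_\dot$ to a point, so $F^{(y)}_\dot$ is contractible, and it is nonempty since it contains $y$ (the case $E = \emptyset$). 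The work is in bookkeeping rather than in any conceptual difficulty: the one point needing care is how the special face map $d_0$ of $(\C/i)_\dot$ translates into the $(E; D_1,\dots,D_m)$ coordinates, after which both the identification of the fiber and the verification of the extra-degeneracy identities are routine and use only that $\D$ is a sieve. In particular the second hypothesis of Theorem~\ref{thm:cofibercalc} plays no role here.
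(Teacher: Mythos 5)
Your proof is correct and matches the paper's in essence: the paper contracts $X_\dot$ onto the augmentation $Y_n$ by exhibiting an extra degeneracy $s_{-1}(C,D_1,\dots,D_m)=(C_{\C\bs\D},\,C_\D,\,D_1,\dots,D_m)$, which, under your identification $C=y\cup E$ (so $C_{\C\bs\D}=y$ and $C_\D=E$), is literally your $\tilde s$. The only difference is organizational --- you first decompose $X_\dot$ over $Y_n$ and then contract each fiber, whereas the paper runs the same extra-degeneracy argument once on the augmented simplicial set --- so the underlying mechanism and the use of Lemma~\ref{lem:splituniondiag} and the sieve hypothesis (and not the complements hypothesis) are the same.
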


\begin{proof}
  Write $X_{-1} = Y_n$ and consider $p_1:X_0 \rto Y_n$ as an augmentation of
  $X$.  An augmented simplicial set with an extra degeneracy contracts onto its
  augmentation \cite[Lemma III.5.1]{g+j}, so it remains for us only to
  construct $s_{-1}: X_m \rto X_{m+1}$ for $m\geq -1$.
  
  We define the map $s_{-1}:Y_n \rto X_0$ to be the inclusion.  For $n\geq 0$,
  we define the map $s_{-1}:X_m \rto X_{m+1}$ in the following manner.  Consider
  a diagram $D = (A_0 \rto \cdots \rto A_n)$ in $\W((\C/i)_m)$.  Write this
  diagram as an $m+1$-tuple of diagrams $(C,D_1,\ldots,D_m)$ and define
  \[s_{-1}(D) = (C_{\C\bs\D}, C_\D, D_1,\ldots,D_m).\] Checking that $s_{-1}$
  satisfies the conditions to be an extra degeneracy and that $p_1$ is an
  augmentation is a straightforward application of
  Lemma~\ref{lem:splituniondiag}.
\end{proof}

\subsection{The map $\kappa$} \label{sec:p_2}

We now move on to $\kappa$.  
We begin with
several technical results.  When the proofs are straightforward from definitions
we omit them.  Recall that we write $\kappa = \W(c)$.

\begin{lemma} \label{lem:projpush} Let $\C$ be a closed assembler.  Suppose that
  $f:A \rto C$ and $g:B \rto C$ in $\W(\C)$ are two morphisms such that there
  exists $h:\kappa(A) \rto \kappa(B)$ in $\W(\C\bs\D)$ making the diagram
  \begin{diagram}
    {\kappa(A) & & \kappa(C) \\
      & \kappa(B)\\};
    \to{1-1}{1-3}^{\kappa(f)} \to{1-1}{2-2}+{outer sep=-.1ex,auto,swap}{h}
    \to{2-2}{1-3}+{outer sep=-.1ex,auto,swap}{\kappa(g)}
  \end{diagram}
  commute.  Then for any pullback square
  \begin{diagram}
    { D & B \\ A & C\\};
    \arrowsquare{g'}{f'}{f}{g}
  \end{diagram}
  we have $\kappa(D) = A$, $\kappa(f') = 1_{\kappa(A)}$ and $\kappa(g') = h$.
\end{lemma}


\begin{lemma} \label{lem:preim-id} If $f:A \rto B$ in $\W(\C)$ has $B_\D = \{\}$ and
  $\kappa(f) = 1_B$ then $f = 1_B$.
\end{lemma}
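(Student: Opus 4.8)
The plan is to decode what the hypothesis $\kappa(f) = 1_B$ says about the combinatorial data of $f$, and then to rule out any component of $A$ lying in $\D$ by a short disjointness argument.

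First I would recall how $\kappa = \W(c)$ acts. Write $A = \{A_i\}_{i\in I}$ and $B = \{B_j\}_{j\in J}$, and let $I' = \{i\in I \mid A_i \notin \D\}$. Since the canonical morphism $c:\C \rto \C\bs\D$ sends exactly the objects of $\D$ to $\initial$ and fixes the others, we have $\kappa(A) = \{A_i\}_{i\in I'}$, and since $B_\D = \{\}$ we get $\kappa(B) = B$. For $i\in I'$, as $\D$ is a sieve in $\C$ and $A_i \notin \D$, the target $B_{f(i)}$ also lies outside $\D$, so each $f_i$ descends to a morphism of $\C\bs\D$; hence $\kappa(f)$ is the morphism $\{A_i\}_{i\in I'}\rto B$ given by $f|_{I'}$ together with the $f_i$ for $i\in I'$. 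Consequently $\kappa(f)=1_B$ forces $I'=J$, and $A_i = B_i$, $f(i)=i$, $f_i = 1_{B_i}$ for every $i\in I'=J$.

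Second, I would show $I\bs I' = \eset$. Suppose instead $i_0\in I$ with $A_{i_0}\in\D$, and set $j = f(i_0)\in J$. Then both $i_0$ and $j$ lie in $f^{-1}(j)$ (the latter since $f(j)=j$), and $i_0\neq j$ because $i_0\notin I' = J$. By the definition of a morphism in $\W(\C)$ the family $\{f_k : A_k \rto B_j\}_{k\in f^{-1}(j)}$ is disjoint, so in particular $f_{i_0}$ and $f_j = 1_{B_j}$ are disjoint, i.e.\ the pullback $A_{i_0}\times_{B_j} B_j$ exists and equals $\initial$. But the pullback of $1_{B_j}$ along $f_{i_0}$ is $A_{i_0}$ itself, which is noninitial because it is a component of an object of $\W(\C)$ --- a contradiction. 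Hence $I = I' = J$, and combined with the first step this gives $A = B$, $f$ the identity on index sets, and all component morphisms identities, i.e.\ $f = 1_B$.

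There is essentially no obstacle here; the one point deserving a little care is the description of $\W(c)$ on morphisms (and the observation, via the sieve condition, that the relevant component morphisms descend to $\C\bs\D$), after which everything follows directly from the definitions of $\kappa$ and of a disjoint covering family. Note that, unlike Lemma~\ref{lem:projpush}, no closedness assumption on $\C$ is needed, since the only pullback invoked --- that of an identity morphism --- always exists.
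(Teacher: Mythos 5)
Your proof is correct, and since the paper omits the proof of this lemma (as ``straightforward from definitions''), your argument fills that gap appropriately: you decode $\kappa(f)=1_B$ to pin down $f$ on the indices $I'$ of non-$\D$ components, then use disjointness of the family over $j=f(i_0)$ together with the fact that the pullback of $1_{B_j}$ along $f_{i_0}$ is $A_{i_0}\neq\initial$ to rule out any $\D$-components of $A$. The only point worth keeping explicit, which you do, is the use of the sieve condition to see that the components of $f$ on $I'$ land in $\C\bs\D$, so that $\kappa(f)$ is literally $f$ restricted to $I'$.
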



\begin{lemma} \label{lem:reducing}
  Suppose that we have a square
  \begin{diagram}
    {\SCob{A}{i} & \SCob{B}{j} \\ \SCob{C}{k} & \SCob{D}{l} \\};
	\arrowsquare{f}{f'}{g}{g'}
  \end{diagram}
  in $\W(\C)$ such that $\kappa(g) = \kappa(g')$.  Then $f_{\C\bs\D} =
  f'_{\C\bs\D}$.
\end{lemma}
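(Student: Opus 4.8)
The plan is to push the commutative square down into $\W(\C\bs\D)$ along the functor $\kappa=\W(c)$, cancel a monomorphism, and then chase the few indices that $\kappa$ discards but the diagram-restriction $(\cdot)_{\C\bs\D}$ retains. Note first that the hypothesis $\kappa(g)=\kappa(g')$ already forces $\kappa(B)=\kappa(C)$, since $g$ and $g'$ have the same target.

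The commutative square gives $gf=g'f'$ in $\W(\C)$, so applying $\kappa$ yields $\kappa(g)\kappa(f)=\kappa(g')\kappa(f')$, which by hypothesis is $\kappa(g)\kappa(f)=\kappa(g)\kappa(f')$. Since $\D$ is a sieve, $\C\bs\D$ is an assembler (Definition~\ref{def:CbsD}), so by Proposition~\ref{prop:Wprop}(1) the morphism $\kappa(g)$ is monic in $\W(\C\bs\D)$; cancelling it gives $\kappa(f)=\kappa(f')$. Because $\D$ is a sieve, any index $i$ of the common domain $A$ with $A_i\notin\D$ automatically has $B_{f(i)}\notin\D$, hence survives $(\cdot)_{\C\bs\D}$, and on such indices $f_{\C\bs\D}$ records exactly what $\kappa(f)$ does; so $f_{\C\bs\D}$ and $f'_{\C\bs\D}$ already agree there.

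The remaining indices are those $i$ with $A_i\in\D$ but $B_{f(i)}\notin\D$. Fix one and let $l$ satisfy $g(f(i))=l=g'(f'(i))$; the sieve property and $B_{f(i)}\notin\D$ force $D_l\notin\D$. Since $B_{f(i)}\notin\D$, under the identification $\kappa(B)=\kappa(C)$ the equality $\kappa(g)=\kappa(g')$ gives $C_{f(i)}=B_{f(i)}$ and $g'_{f(i)}=g_{f(i)}$, so on the $g'$-side the indices $f(i)$ and $f'(i)$ both label members of one disjoint covering family of $D_l$. If they were distinct, their disjointness combined with the fact that $A_i$ maps compatibly to $C_{f(i)}$ and to $C_{f'(i)}$ over $D_l$ (commutativity of the square) would produce a morphism $A_i\rto\initial$, contradicting axiom (M); hence $f(i)=f'(i)$, and then cancelling the monomorphism $g'_{f(i)}$ of $\C$ gives $f_i=f'_i$. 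A symmetric argument interchanging $f$ and $f'$ makes the domains and component data of $f_{\C\bs\D}$ and $f'_{\C\bs\D}$ coincide, so $f_{\C\bs\D}=f'_{\C\bs\D}$.

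The genuinely delicate point is this last step: the diagram-restriction $(\cdot)_{\C\bs\D}$ keeps $\D$-objects in the source that $\kappa$ deletes, so the tidy cancellation of the second paragraph is not literally the statement, and one must pin down which $\D$-indices persist using the sieve hypothesis and disjointness of covers. Everything else is routine unwinding of the definitions of $\kappa$ and of $(\cdot)_{\C\bs\D}$.
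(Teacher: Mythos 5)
Your proof is correct and follows essentially the same approach as the paper's: both reduce to showing $f(i)=f'(i)$ and $f_i=f'_i$ on the indices surviving $(\cdot)_{\C\bs\D}$, and both derive $f(i)=f'(i)$ by noting that $g'_{f(i)}=g_{f(i)}$ and $g'_{f'(i)}$ would be distinct, hence disjoint, members of a covering family of $D_l$, which would force $A_i$ to map to $\initial$. The only cosmetic difference is that you first extract $\kappa(f)=\kappa(f')$ by cancelling the monic $\kappa(g)$ and split into the cases $A_i\in\D$ versus $A_i\notin\D$, whereas the paper runs the disjointness argument uniformly over all $i$ with $B_{f(i)}\notin\D$; the underlying mechanism is identical.
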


\begin{proof}
  Let $\tilde J = \{j\in J\,|\, B_j\in \C\bs\D\}$, and define $\tilde K$
  analogously.  As $\kappa(g) = \kappa(g')$ it follows that $\tilde J = \tilde
  K$.  We first show that $f^{-1}(\tilde J) = (f')^{-1}(\tilde K)$.  Suppose
  that $i\in I$ is such that $f(i) \in \tilde J$.  Then we have the following
  square in $\C$:
  \begin{diagram}
    {A_i & B_{f(i)} \\ C_{f'(i)} & D_{gf(i)} \\};
	\arrowsquare{f_i}{f'_i}{g_{f(i)}}{g'_{f'(i)}}
  \end{diagram}
  Thus the morphisms $g_{f(i)}$ and $g'_{f'(i)}$ are not disjoint; since both
  appear in $\kappa(g)$ we must have $f(i) = f'(i)$, and thus $f'(i)$ is in
  $\tilde K$.  Thus $f^{-1}(\tilde J) \subseteq (f')^{-1}(\tilde K)$, and by
  symmetry these sets are equal.  But we also know that
  \[g_{f(i)} = g'_{f(i)} = g'_{f'(i)};\]
  since all morphisms in $\C$ are monic, we can conclude that $f_i = f'_i$.  
  Thus $f$ and $f'$ are equal when restricted to $\{A_i\}_{i\in f^{-1}(\tilde
    J)}$, which implies that $f_{\C\bs\D} = f'_{\C\bs\D}$.
\end{proof}

\begin{lemma} \label{lem:His0} Let $X_\dot$ be a simplicial set.  Suppose that
  for every $m$-cycle $\epsilon = \sum_{i=1}^k c_ix_i$ with $x_i \in X_m$ there
  exist simplices $\tilde x_i \in X_{m+1}$ such that $d_0\tilde x_i = x_i$ and
  whenever $d_jx_i = d_{j'}x_{i'}$ it is also the case that $d_{j+1}\tilde x_i =
  d_{j'+1}\tilde x_{i'}$. Then $H_mX_\dot = 0$.
\end{lemma}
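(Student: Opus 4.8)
The plan is to produce, for an arbitrary $m$-cycle $\epsilon=\sum_{i=1}^{k}c_ix_i$ in the unnormalized simplicial chain complex of $X_\dot$, an explicit $(m+1)$-chain bounding it, thereby showing every cycle is a boundary. Applying the hypothesis to this particular $\epsilon$ yields simplices $\tilde x_i\in X_{m+1}$ with $d_0\tilde x_i=x_i$ satisfying the stated compatibility, and the obvious candidate for the bounding chain is $\tilde\epsilon=\sum_{i=1}^{k}c_i\tilde x_i$.

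The first computational step is to expand $\partial\tilde\epsilon$. Separating the $j=0$ term of each alternating face sum and reindexing the remaining terms gives
\[\partial\tilde\epsilon=\sum_{i}c_i\,d_0\tilde x_i-\sum_{i}\sum_{j=0}^{m}(-1)^{j}c_i\,d_{j+1}\tilde x_i=\epsilon-R,\qquad R:=\sum_{i}\sum_{j=0}^{m}(-1)^{j}c_i\,d_{j+1}\tilde x_i,\]
where I have used $d_0\tilde x_i=x_i$. Thus the lemma reduces to showing $R=0$, and this is the one place where the cycle condition and the compatibility hypothesis are both needed.

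To see $R=0$, I would partition the set of index pairs $(i,j)$ into blocks according to the value $y=d_jx_i\in X_{m-1}$. If $(i,j)$ and $(i',j')$ lie in the same block then $d_jx_i=d_{j'}x_{i'}$, so the hypothesis forces $d_{j+1}\tilde x_i=d_{j'+1}\tilde x_{i'}$; call this common simplex $\tilde y$. Hence $R=\sum_{y}\big(\sum_{(i,j):\,d_jx_i=y}(-1)^{j}c_i\big)\tilde y$. On the other hand, the cycle condition gives $0=\partial\epsilon=\sum_{y}\big(\sum_{(i,j):\,d_jx_i=y}(-1)^{j}c_i\big)y$, and distinct simplices are linearly independent in $C_{m-1}(X_\dot)$, so each inner coefficient $\sum_{(i,j):\,d_jx_i=y}(-1)^{j}c_i$ vanishes. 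Therefore $R=0$, so $\partial\tilde\epsilon=\epsilon$ and $\epsilon$ is a boundary; since $\epsilon$ was arbitrary, $H_mX_\dot=0$.

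The only genuine content is the block-wise cancellation of $R$; everything else is routine bookkeeping with the alternating faces. I do not expect a real obstacle — the main things to be careful about are the reindexing sign in the displayed identity and keeping the two partitioning arguments (one for $R$, one for $\partial\epsilon$) indexed compatibly. Passing between the normalized and unnormalized chain complexes is harmless, since they compute the same homology, so there is no loss in carrying out the argument in the unnormalized complex as above.
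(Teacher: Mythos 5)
Your proof is correct and takes essentially the same approach as the paper's: set $\tilde\epsilon=\sum_i c_i\tilde x_i$, peel off the $d_0$ terms to get $\partial\tilde\epsilon=\epsilon-R$, and show $R=0$ from the cycle condition. The paper disposes of $R$ in one sentence (``the terms in the sum cancel the same way that they did for $\epsilon$''), whereas you spell it out via the partition of pairs $(i,j)$ by $d_jx_i$ and linear independence of distinct simplices — a faithful expansion of the intended argument.
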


\begin{proof}
  Observe that for all $i$ and $j$, the terms $d_jx_i$ are elements of a basis
  for a free abelian group, so that the only way that
  $\partial \epsilon = \sum_{i=1}^k c_i \sum_{j=0}^m (-1)^j d_j x_i$ can be zero
  is if the coefficients of each basis element are $0$.  Therefore the $d_jx_i$
  match up in some way which ensures that the coefficients of the simplices in
  the end are zero.

  Let $\beta = \sum_{i=1}^k c_i \tilde x_i$.  Then
  \begin{align*}
    \partial \beta &=
                     \partial \sum_{i=1}^k c_i\tilde x_i  = \sum_{i=1}^k
                     \bigg(\sum_{j=1}^{m+1} (-1)^jc_id_j\tilde x_i +
                     c_id_0\tilde x_i\bigg) \\
                   &= \sum_{i=1}^k c_i\sum_{j=1}^{m+1}(-1)^jd_j\tilde x_i+
                     \epsilon = \epsilon.
  \end{align*}
  The last step follows because the construction of $\tilde x_i$ ensures that
  the terms in the sum cancel the same way that they did for $\epsilon$.
\end{proof}

To show that $\kappa$ is a weak equivalence, we use the simplicial version of
Quillen's Theorem A (see \cite[Proposition 1.4.A]{waldhausen83}), which 
states that the following lemma implies that $\kappa$ is a homotopy equivalence.

\begin{lemma} \label{lem:simplyconnected} Let $\alpha:\Delta^n \rto N
  \W(\C\bs\D)$ be any $n$-simplex of $ N \W(\C,\D)$.  Define
  $\kappa/(n,\alpha)$ to be the pullback of the diagram
  \[\Delta^n \rto^\alpha  N \W(\C,\D)  \lto^{\kappa}Y_\dot .\]
  Then $\kappa/(n,\alpha)$ is contractible for all $n$ and $\alpha$.
\end{lemma}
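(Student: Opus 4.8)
The plan is to show that $p_2/(n,\alpha)$ is contractible by exhibiting an explicit contraction, in the spirit of the extra-degeneracy argument used for $p_1$ in Lemma~\ref{lem:discrete-levelwise}. Fix an $n$-simplex $\alpha = (E_0 \rto \cdots \rto E_n)$ of $N\W(\C,\D)$, thought of as a diagram in $\W(\C\bs\D)$ whose morphisms lift to finite disjoint covering families in $\C$ completable by morphisms with domains in $\D$. An $m$-simplex of the pullback $p_2/(n,\alpha)$ consists of an $m$-simplex $D = (A_0 \rto \cdots \rto A_m)$ of $Y_\dot$ together with a map $\Delta^m \rto \Delta^n$ (equivalently a monotone map $[m]\rto[n]$) whose induced composite with $\alpha$ agrees with $p_2(D) = \kappa(D)$. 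First I would identify a canonical basepoint: the degenerate simplex sitting over the last vertex $E_n$ of $\alpha$, with $A_i = E_n$ for all $i$ (in $\W(\C)$, choosing $E_n$ itself since $(E_n)_\D = \{\}$). The goal is to contract $p_2/(n,\alpha)$ onto this basepoint.

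The key construction is this. Given a simplex $(D, \varphi)$ of $p_2/(n,\alpha)$ with $D = (A_0 \rto \cdots \rto A_m)$ and $\varphi\colon [m]\rto[n]$, I want to produce an $(m{+}1)$-simplex realizing a homotopy from $(D,\varphi)$ to the basepoint. The idea is to precompose $D$ with one extra stage: using that $\alpha$'s final morphism has a disjoint-cover lift, and that $\D$ is a sieve, I can find a morphism $A_0' \rto A_0$ in $\W(\C)$ whose $\C\bs\D$-part realizes the appropriate edge of $\alpha$, so that the extended diagram $(A_0' \rto A_0 \rto \cdots \rto A_m)$ is again in $Y_{m+1}$ and its $\kappa$-image factors $\alpha$ correctly. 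This is where Lemma~\ref{lem:projpush} and Lemma~\ref{lem:reducing} enter: I need that the choice of lift does not matter up to the equivalence that $\kappa$ already records, so that the construction is compatible with face maps. Concretely, I would check the identity $d_{j+1}\tilde x_i = d_{j'+1}\tilde x_{i'}$ whenever $d_j x_i = d_{j'} x_{i'}$, which by Lemma~\ref{lem:His0} (together with the simply-connected-plus-acyclic criterion behind Waldhausen's simplicial Theorem~A) would give both $\pi_0$-triviality and $H_*$-triviality of $p_2/(n,\alpha)$, hence contractibility.

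Alternatively — and this may be cleaner — I would try to show directly that $p_2/(n,\alpha)$ has an extra degeneracy $s_{-1}$ together with an augmentation to a point. The augmentation sends $(D,\varphi)$ to the basepoint; the extra degeneracy $s_{-1}(A_0 \rto \cdots \rto A_m)$ prepends a copy of $A_0$ via the identity, but with the order-map $[m{+}1]\rto[n]$ obtained by inserting a repeat of $\varphi(0)$. One must verify $s_{-1}$ lands in $p_2/(n,\alpha)$ — i.e. that the $\kappa$-image of the lengthened diagram still factors through $\alpha$ — which follows because inserting an identity morphism changes nothing under $\kappa$, and because $\varphi$'s image is unchanged. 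Then the simplicial identities $d_0 s_{-1} = \mathrm{id}$, $d_{i+1}s_{-1} = s_{-1}d_i$, $s_{i+1}s_{-1} = s_{-1}s_i$ are routine from the definitions, exactly as in the proof of Lemma~\ref{lem:discrete-levelwise}.

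The main obstacle I anticipate is the bookkeeping with the order-preserving maps $\varphi\colon[m]\rto[n]$ and the requirement that the chosen lifts of $\alpha$'s morphisms into $\W(\C)$ be made coherently across all faces. The cleanest route is probably to reduce, via Lemma~\ref{lem:reducing}, to the observation that two lifts with the same $\kappa$-image induce the same map on $\C\bs\D$-parts, so that the contracting homotopy is forced and the simplicial identities hold on the nose; if that reduction goes through, the proof is a direct verification, and if it does not, one falls back on the homological criterion of Lemma~\ref{lem:His0} combined with a $\pi_1$-vanishing argument to feed into the simplicial Theorem~A.
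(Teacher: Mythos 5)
Your first route follows the same skeleton as the paper's proof (show $\pi_0$ trivial, $\pi_1$ trivial, $H_*$ trivial, and conclude contractibility via Lemma~\ref{lem:His0}), but it leaves two substantial gaps. First, the $\pi_1$-vanishing step that you ``feed into the simplicial Theorem~A'' at the very end is actually the longest part of the paper's argument: a careful zigzag analysis that reverses antiparallel edges via pullbacks (using Lemma~\ref{lem:projpush} and Lemma~\ref{lem:preim-id}) and composes parallel ones, reducing to zigzags of length $\le 2$. You cannot treat this as a black box. Second, and more seriously, your sketch of the $H_*$-vanishing step prepends an extra stage to each simplex $x_i$ independently, but the hypothesis of Lemma~\ref{lem:His0} requires the face-matching condition $d_jx_i = d_{j'}x_{i'} \Rightarrow d_{j+1}\tilde x_i = d_{j'+1}\tilde x_{i'}$ to hold \emph{across} the whole cycle. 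Independent prepending will not produce this coherence. The paper's key trick is to form a single object $\tilde X$ as the limit of a diagram indexed by \emph{all} the $X_{ij}\sqcup D_{\alpha_i(m)}$ appearing in the cycle, and then prepend the morphisms $f_i\colon\tilde X\rto X_{i0}\sqcup D_{\alpha_i(m)}$ projected to $\C\bs\D$; Lemma~\ref{lem:reducing} is then used exactly to check the face-matching. Without this common lift your argument would not close up.

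Your alternative route via an extra degeneracy is flawed as stated. The operation you describe --- prepend $A_0\xrightarrow{=}A_0$ and insert a repeat of $\varphi(0)$ --- is literally the existing degeneracy $s_0$ on both coordinates of the pullback, i.e.\ $s_{-1}=s_0$. But $s_0$ does not satisfy the extra-degeneracy axioms: one needs $d_1s_{-1}=s_{-1}d_0$, while $d_1s_0=\mathrm{id}$, which is not $s_0d_0$. Using an existing degeneracy never produces a contraction (otherwise every simplicial set would be contractible). The reason this style of argument worked for $p_1$ in Lemma~\ref{lem:discrete-levelwise} is that there $s_{-1}$ is a \emph{genuinely new} operation (splitting $C$ into its $\C\bs\D$ and $\D$ parts), and the relevant simplicial set is homotopically discrete. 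Here $p_2/(n,\alpha)$ has $n+1$ distinct vertices $(i,A_i)$ and is not collapsible by a one-step prepend, so the extra-degeneracy route would at minimum require a fundamentally different $s_{-1}$; the paper avoids this by arguing through connectivity, $\pi_1$, and homology instead.
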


\begin{proof}
  We prove this by showing three things:
  \begin{itemize}
  \item[(a)] $\kappa/(n,\alpha)$ is connected,
  \item[(b)] $\pi_1(\kappa/(n,\alpha)) = 0$, and
  \item[(c)] for all $i>0$, $H_i(\kappa/(n,\alpha)) = 0$.  
  \end{itemize}
  Together, these imply that all homotopy groups of $\kappa/(n,\alpha)$ are trivial,
  showing that it is contractible.  Write 
  \[\alpha = A_0\rto A_1 \rto \cdots \rto A_n \in N_n\W(\C,\D),\]
  and consider $\Delta^n$ to be the nerve of $0\rto 1\rto\cdots\rto n$.  Fix a
  lift
  \[\tilde \alpha = A_0\sqcup D_0 \rto A_1 \sqcup D_1 \rto \cdots
  \rto A_n \in Y_n\] of $\alpha$ along $\kappa$, where $D_i$ is in $\W(\D)$; such a
  lift exists by the definition of $\W(\C,\D)$.  Note that there is no $D_n$ at
  the end of the diagram, as all elements of $Y_n$ have last term in
  $\W(\C\bs\D)$. We prove (a)-(c) in order.

  \noindent
  Proof of (a): The vertices of $\kappa/(n,\alpha)$ are pairs $(i,A)$ for $0\leq i
  \leq n$ and $A$ in $Y_0$ such that $\alpha(i) = \kappa(A)$.  By definition, the
  elements of $Y_0$ are exactly objects in $\W(\C\bs\D)$, and $\kappa$ is the
  identity of these.  Thus the vertices of $\kappa/(n,\alpha)$ are pairs $(i,A_i)$.
  The vertices of $\tilde\alpha$ are $A_0,A_1,\ldots,A_n$.  Thus there exists a
  simplex of $\kappa/(n,\alpha)$ which contains all of its vertices; since a
  simplex is connected $\kappa/(n,\alpha)$ is, as well.

  \noindent
  Proof of (b): An element of $\pi_1(\kappa/(n,\alpha))$ is represented by a zigzag
  \[\makeshort[.6]{i_0 \rto^{\beta_0} i_1 \lto^{\beta_1} i_2 \rto^{\beta_2} \cdots
    \rto^{\beta_m} i_m \in \Delta^n,}\] with $i_0 = i_m$, together with morphisms
  $f_j: A_{i_j}\sqcup E_j \rto A_{i_{j'}}$ in $Y_1$, where $\beta_j:i_j \rto
  i_{j'}$ and $E_j$ is in $\W(\D)$.  We show that any such zigzag is
  null-homotopic.  In this, we use the usual approach for replacing zigzags in
  categories closed under pullbacks: we show that we can replace any two arrows
  going in the same direction by a ``composition'', and then we show that we can
  replace arrows with the same codomain by arrows with the same domain using the
  pullback.  These two steps allow us to reduce the problem to zigzags of length
  $1$ or $2$.  Finally, we analyze these two cases.  We include all details of
  this approach because the unusual simplicial structure on $Y_\dot$ introduces
  several subtleties.

  First, suppose that two successive $\beta_j$'s go in the same direction;
  without loss of generality, we assume that $\beta_j:i_j\rto i_{j+1}$ and
  $\beta_{j+1}:i_{j+1} \rto i_{j+2}$.  Then we can construct a $2$-simplex
  represented by
  \[\ms[4em]{A_{i_j} \sqcup E_j \sqcup E_{j+1} \rto^{f_j\sqcup 1_{E_{j+1}}}
    A_{i_{j+1}}\sqcup E_{j+1} \rto^{f_{j+1}} A_{i_{j+2}}},\] which shows that
  this zigzag is homotopic to the one where $\beta_j$ and $\beta_{j+1}$ are
  replaced by $i_j \rto i_{j+2}$ with morphism $A_{i_j}\sqcup (E_j \sqcup
  E_{j+1}) \rto A_{i_{j+2}}$.

  Now suppose that $\beta_j:i_j \rto i_{j+1}$ and $\beta_{j+1}: i_{j+2} \rto
  i_{j+1}$ and assume without loss of generality that $i_j \leq i_{j+2}$.  Take
  the pullback of $f_j$ and $f_{j+1}$ to form a square
  \begin{diagram}
    { A_{i_j} \sqcup E & A_{i_{j+2}}\sqcup  E_{j+2} \\
      A_{i_j} \sqcup E_j & A_{i_{j+1}} \\};
    \arrowsquare{f_j'}{f_{j+1}'}{f_{j+1}}{f_j}
  \end{diagram}
  The pullback is of the form $A_{i_j} \sqcup E$ by Lemma~\ref{lem:projpush} and
  because $i_j \leq i_{j+2}$.  Thus we can replace these two simplices in the
  zigzag by the two simplices represented by
  \[(i_j \rto i_j, (A_{i_j} \sqcup E \rto A_{i_j} \sqcup E_j)_{\C\bs\D})\] and
  \[(i_j \rto i_{j+1}, (A_{i_j}\sqcup E \rto A_{i_{j+1}}\sqcup
  E_{j+1})_{\C\bs\D}),\] thus reversing the directions of the two arrows in the
  zigzag.  (By Lemma~\ref{lem:preim-id} the first of these is trivial, but that
  does not matter for the current analysis.)

  We now only need to consider zigzags of length $1$ or $2$.  A zigzag of length
  $1$ is a morphism $A_i \sqcup E \rto A_i$ which projects down to the identity
  morphism in $\W(\C\bs\D)$.  By Lemma~\ref{lem:preim-id}, this morphism must be
  the identity morphism, and thus such a zigzag is represented by a degenerate
  simplex, and is therefore contractible.  Now consider a zigzag of length $2$.
  Such a zigzag is represented by a pair of morphisms
  \[A_i \lto^{f_1} A_j \sqcup E \qquad A_j \sqcup E' \rto^{f_2} A_i.\] We thus
  have a pullback square
  \begin{diagram}
    { A_j \sqcup E'' & A_j \sqcup E' \\ A_j \sqcup E & A_i \\};
    \arrowsquare{f_2'}{f_1'}{f_2}{f_1} 
    \arrow{->,densely dotted}{1-1}{2-2}^h
  \end{diagram}
  where the pullback is $A_j\sqcup E''$ and $(f_1')_{\C\bs\D} = (f_2')_{\C\bs\D}
  = 1_{A_j}$ by Lemma~\ref{lem:projpush}.  We can replace this zigzag by
  \[\makeshort{(j \rto i, h), (j \rto j, (f_1')_{\C\bs\D}), (j \rto
    j, (f_2')_{\C\bs\D}), (j \rto i, h)}.\]
  As $(f_1')_{\C\bs\D} = (f_2')_{\C\bs\D} = 1_{A_j}$, the middle two simplices
  are degenerate and can be removed.  The outside two simplices are just the
  same simplex repeated twice, so we see that this zigzag is also
  null-homotopic.

  \noindent
  Proof of (c): Let $m\geq 2$, and let $\epsilon = \sum_{i=1}^k c_i(\alpha_i,x_i)$
    be an $m$-cycle in $\kappa/(n,\alpha)$.  We use Lemma~\ref{lem:His0} to
    prove that $\epsilon = \partial \beta$ for some $\beta$.

    We write $x_i\in Y_m$ as
    \[X_{i0} \rto X_{i1} \rto \cdots \rto X_{im},\] where $X_{im} =
    A_{\alpha_i(m)}$.  For each simplex $(\alpha_i,x_i)$ we have a diagram
    \[\ms{X_{i0} \sqcup D_{\alpha_i(m)} \rto \cdots\rto X_{im}\sqcup D_{\alpha_i(m)} \rto A_n \in \W(\C)}.\] Let $\I$ be the
    category with
    \[\ob \I = \{1,\ldots,k\}\times \{0,\ldots,m+1\} / ((i,m+1)\sim (i',m+1))\]
    for all $1\leq i,i'\leq k$, and
    \[\I((i,j),(i',j')) = \begin{cases} * \caseif i = i'\hbox{ and } j \leq
      j', \\
      * \caseif j' = m+1 \\ \emptyset \caseotherwise.\end{cases}\] Let $\chi: \I
    \rto \W(\C)$ be the functor taking the pair $(i,j)$ to $X_{ij}\sqcup
    D_{\alpha_i(m)}$ and the morphisms to the morphisms defined by the simplices
    above.  We define $\chi(i,m+1) = A_n$.  Let $\tilde X$ be the limit of this
    diagram, which exists because $\W(\C)$ has pullbacks, and write $f_i: \tilde
    X \rto X_{i0}$.  Let $\mu = \min_i \alpha_i(0)$; note that by repeated
    application of Lemma~\ref{lem:projpush} we have $\tilde X_{\C\bs\D} =
    A_\mu$.  Let $\tilde \alpha_i: [m+1] \rto [n]$ be defined by $\tilde
    \alpha_i(j) = \alpha_i(j-1)$ for $j>0$, and $\tilde \alpha_i(0) = \mu$, and
    let $\tilde x_i$ be the simplex represented by the diagram
    \[\big(\tilde X \rto^{f_i} X_{i0}\sqcup D_{\alpha_i(m)}\rto X_{i1} \sqcup D_{\alpha_i(m)}\rto \cdots
    \rto X_{im} \sqcup D_{\alpha_i(m)}\big)_{\C\bs\D},\] which we write 
    \[\tilde X_i \rto^{h_i} X_{i0} \rto X_{i1} \rto\cdots\rto X_{im}.\] 
    By definition, $d_0(\tilde \alpha_i, \tilde x_i) = (\alpha_i,x_i)$, so to
    apply Lemma~\ref{lem:His0} it remains to check that if $d_j(\alpha_i,x_i) =
    d_{j'}(\alpha_{i'}, x_{i'})$ then $d_{j+1}(\tilde \alpha_i, \tilde x_i) =
    d_{j'+1}(\tilde \alpha_{i'}, \tilde x_{i'})$..

    Let $0\leq j,j' \leq m$.  We want to show that if $d_j(\alpha_i,x_i) =
    d_{j'}(\alpha_{i'},x_{i'})$ then $d_{j+1}(\tilde \alpha_i, \tilde x_i) =
    d_{j'+1}(\tilde\alpha_{i'}, \tilde x_{i'})$
    We need to show that the two diagrams
    \begin{align*}
    D &= \makeshort{(\tilde X \rto X_{i0} \sqcup D_{\alpha_i(m)} \rto \cdots} \\
    & \qquad\ms{\cdots\rto \widehat
      X_{ij} \sqcup D_{\alpha_i(m)} \rto \cdots \rto X_{im}\sqcup
      D_{\alpha_i(m)})}_{\C\bs\D}
    \end{align*}
    and
    \begin{align*}
      D' &= \makeshort{(\tilde X \rto X_{i'0} \sqcup D_{\alpha_{i'}(m)} \rto\cdots} \\
      &\qquad\ms{\cdots \rto \widehat X_{i'j'} \sqcup D_{\alpha_{i'}(m)} \rto \cdots \rto
      X_{i'm}\sqcup D_{\alpha_{i'}(m)})}_{\C\bs\D}
    \end{align*}
    are equal.
    Note that 
    \[\makeshort{D = Y \rto X_{i0} \rto\cdots \rto \widehat X_{ij} \rto\cdots
      \rto X_{im}}\]
    for some $Y$, and analogously for $D'$.  The assumption that $d_jx_i =
    d_{j'}x_{i'}$ implies that the composition of all but the first morphisms
    are the same in both diagrams, so (as all morphisms in $\W(\C)$ are monic by
    Proposition~\ref{prop:Wprop}(1)) it suffices to show that the total composition of $D$
    is the same as the total composition of $D'$.

    For the following discussion, we assume that $0 < j,j' < m$; however,
    as this is assumed only for ease of notation, it does not affect the proof.
    We have the following diagram:
    \begin{general-diagram}{.9em}{3em}
      {& X_{i0}\sqcup D_{\alpha_i(m)} & X_{im} \sqcup D_{\alpha_i(m)} \\
        \tilde X &&&  A_m \\
        & X_{i'0} \sqcup D_{\alpha_{i'}(m)} & X_{i'm} \sqcup D_{\alpha_{i'}(m)}
        \\};
      \to{1-2}{1-3}^{f\sqcup 1} \to{3-2}{3-3}^{f'\sqcup 1}
      \to{2-1}{1-2}^{f_i} \to{2-1}{3-2}_{f_{i'}}
      \to{1-3}{2-4}^g \to{3-3}{2-4}_{g'}
    \end{general-diagram}
    As $d_j\alpha_i = d_{j'}\alpha_{i'}$ we know that $\alpha_i(m) =
    \alpha_{i'}(m)$, and in particular $\kappa(g) = \kappa(g')$.  Thus
    Lemma~\ref{lem:reducing} applies, and
    \[h_if = (f_i(f\sqcup 1))_{\C\bs\D} = (f'_{i'}(f'\sqcup 1)))_{\C\bs\D} =
    h_{i'}f'.\] (When $j$ and $j'$ are either $0$ or $m$ this just affects the
    indices in the above diagram; we define $f$ and $f'$ to be the full
    compositions in $d_jx_i$ and $d_{j'}x_{i'}$, respectively.)
\end{proof}

\subsection{The functor $I$} \label{sec:iota}

Lastly we consider $I$.  First, we need the following technical result,
which allows us to remove ``errors'' in extending weak equivalences in
$\W(\C\bs\D)$ to weak equivalences in $\W(\C)$.

\begin{lemma} \label{lem:cutting-up} Suppose that $\C$ is a closed assembler and
  has complements for all objects in $\D$.  Suppose we are given a finite
  collection of morphisms $f_i:A_i \rto B$ in $\C$ such that for $i\neq i'$,
  $A_i\times_B A_{i'}$ is in $\D$.  Then there exists a morphism
  \[g:\SCob{Z}{k} \rto \SCob{A}{i}\] in $\W(\C)$ such
  that for all $k\neq k'\in K$ the morphisms $f_{g(k)}g_k$ and $f_{g(k')}g_{k'}$
  are either equal or disjoint.  If they are equal then $g(k) \neq g(k')$.
\end{lemma}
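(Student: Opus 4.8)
The plan is to prove the following sharper statement by induction on $|I|$: there is a finite set $\mathcal{A}$ of pairwise-disjoint noninitial subobjects of $B$ (the ``atoms'') such that every atom factors through at least one $f_i$, and for each $i$ the atoms $\alpha$ that factor through $f_i$, viewed as subobjects of $A_i$, form a finite disjoint covering family of $A_i$. Granting this, the lemma follows at once: set $K=\{(\alpha,i) : \alpha\in\mathcal A,\ \alpha\text{ factors through }f_i\}$, let $Z_{(\alpha,i)}$ be the domain of $\alpha$, let $g(\alpha,i)=i$, and let $g_{(\alpha,i)}$ be the (unique, since $f_i$ is monic) factorization $Z_\alpha\to A_i$; then the families over a fixed $i$ are finite disjoint covering families by hypothesis, and for $(\alpha,i)\neq(\alpha',i')$ the composites $f_ig_{(\alpha,i)}$ and $f_{i'}g_{(\alpha',i')}$ are the monos $\alpha$ and $\alpha'$, which are equal precisely when $\alpha=\alpha'$ (and then $i\neq i'$) and disjoint otherwise, since distinct atoms are disjoint.

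For the induction, the cases $|I|\le 1$ are trivial ($\mathcal A=\{f_{i_0}\}$ if $I=\{i_0\}$). For $|I|\ge 2$ write $I=I'\sqcup\{i_0\}$ and apply the inductive hypothesis to $\{f_i\}_{i\in I'}$, obtaining atoms $\mathcal A'$. For $\alpha'\in\mathcal A'$, choosing $i\in I'$ with $\alpha'$ through $f_i$ shows $\gamma_{\alpha'}:=Z_{\alpha'}\times_B A_{i_0}$ is a subobject of $A_i\times_B A_{i_0}\in\D$, hence lies in $\D$ because $\D$ is a sieve (this is where the standing sieve hypothesis of Theorem~\ref{thm:cofibercalc} is used). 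Since $\C$ has complements for objects of $\D$, the monomorphism $\gamma_{\alpha'}\to Z_{\alpha'}$ extends to a finite disjoint covering family $\{\gamma_{\alpha'}\to Z_{\alpha'}\}\cup\{C\to Z_{\alpha'}\}_{C\in\mathcal C_{\alpha'}}$, the $C$'s being subobjects of $B$ disjoint from $A_{i_0}$; moreover $\{\gamma_{\alpha'}\to A_i\times_B A_{i_0}\}_{\alpha'\text{ through }f_i}$ is a disjoint cover of $A_i\times_B A_{i_0}$, obtained by pulling back the inductive cover of $A_i$. Combining this with a complement of each $A_i\times_B A_{i_0}\to A_{i_0}$ ($i\in I'$) and axiom (R), take a common refinement $\mathcal G$ of the resulting covers of $A_{i_0}$; each $G\in\mathcal G$ is, for every $i\in I'$, either disjoint from $A_i$ or contained in some $\gamma_{\alpha'}$ with $\alpha'$ through $f_i$, and since distinct $\gamma$'s are disjoint, $G$ is either ``fresh'' (disjoint from every $A_i$, $i\in I'$) or contained in a unique $\gamma_{\alpha'}$; one checks that $\{G\in\mathcal G: G\subseteq\gamma_{\alpha'}\}$ disjoint-covers $\gamma_{\alpha'}$. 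The final atom set is $\mathcal A=\{\text{fresh }G\}\cup\{G\subseteq\text{some }\gamma_{\alpha'}\}\cup\bigcup_{\alpha'\in\mathcal A'}\mathcal C_{\alpha'}$.

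It then remains to verify the three properties of $\mathcal A$, all of which are routine diagram-chases using that every morphism of $\C$ is monic and $\C$ is closed. Pairwise disjointness: two $G$'s are disjoint as members of the disjoint cover $\mathcal G$; a $G$ and a $C\in\mathcal C_{\alpha'}$ because $C$ is disjoint from $A_{i_0}\supseteq G$; two $C$'s either lie in a common disjoint cover (same $\alpha'$) or in disjoint atoms $Z_{\alpha'},Z_{\alpha''}$. Every atom lies in some $A_i$: a $G$ lies in $A_{i_0}$, and a $C\in\mathcal C_{\alpha'}$ lies in any $A_i$ ($i\in I'$) through which $\alpha'$ factors. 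Finally $A_{i_0}$ is disjoint-covered by all of $\mathcal G$ (the $C$'s are disjoint from $A_{i_0}$ and contribute nothing), while for $i\in I'$ the atoms inside $A_i$ are exactly the $G\subseteq\gamma_{\alpha'}$ and the $C\in\mathcal C_{\alpha'}$ over those $\alpha'$ through $f_i$; these refine the inductive cover $\{Z_{\alpha'}\to A_i\}$ of $A_i$ (each $Z_{\alpha'}$ being disjoint-covered by $\{G\subseteq\gamma_{\alpha'}\}\cup\mathcal C_{\alpha'}$), hence disjoint-cover $A_i$ by transitivity of covers. The main obstacle is not any single step but the bookkeeping: one must run two nested layers of refinement — intersecting the inductive atoms with $A_{i_0}$, and separately refining the cover of $A_{i_0}$ — so that the cut pieces agree on every overlap, which is exactly why it is cleaner to track atoms as subobjects of $B$ appearing once than to carry the morphisms $g_k$ around directly, and why the sieve hypothesis is needed to keep each auxiliary object inside $\D$ so that ``complements'' stays applicable.
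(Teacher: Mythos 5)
Your proof is correct and follows the same plan as the paper's: induct on $|I|$, remove an index $i_0$, take complements of the $\D$-overlaps with $A_{i_0}$ both inside $A_{i_0}$ and inside the inductive pieces, and stitch everything together via a common refinement of covers of $A_{i_0}$ using axiom (R). The ``atom'' bookkeeping --- a set of pairwise-disjoint subobjects of $B$, each tagged with the $f_i$'s it factors through --- is a cleaner presentation of the same construction, and it makes the ``equal or disjoint'' and ``equal $\Rightarrow$ $g(k)\neq g(k')$'' verifications transparent where the paper simply asserts that the constructed morphism works.
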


\begin{proof}
  We prove this by induction on $|I|$.  If $|I| \leq 1$ then the lemma holds
  tautologically, so we only need to check the inductive step.

  Pick $i_0\in I$, and consider the collection $\{f_i\}_{i\neq i_0}$.  The
  conditions of the lemma apply to this collection, and thus there exists
  $g:\{Z_k\}_{k\in K} \rto \{A_i\}_{i\neq i_0}$ satisfying the conditions of the
  lemma.  As $\C$ has complements for all objects of $\D$ and $\D$ is a sieve in
  $\C$, $\C$ has complements for $Z_k\times_B A_{i_0}$ (which sits above
  $A_{g(k)}\times_B A_{i_0}$) for all $k\in K$.  Let $\mathscr{G}_k$ be the
  finite disjoint covering family of $Z_k$ which includes the morphism
  $Z_k\times_B A_{i_0} \rto Z_k$; let $\mathscr{F}_k$ be the finite disjoint
  covering family of $A_{i_0}$ which includes this morphism.  Now let
  $\mathscr{F}$ be a common refinement of the $\mathscr{F}_k$'s and let
  $\mathscr{G}_k'$ be the refinement of $\mathscr{G}_k$ by $\mathscr{F}$.  By
  definition all of these families are finite disjoint covering families.

  A morphism $\{W_l\}_{l\in L}\rto \{A_i\}_{i\in I}$ is a collection of finite
  disjoint covering families, one for each $i\in I$.  Thus in order to construct
  the desired morphism, it suffices to give a covering family for each $i\in I$.
  For $i_0$, we take $\mathscr{F}$.  For $i\neq i_0$, take a refinement of
  $\{g_k:Z_k \rto A_i\}_{k\in g^{-1}(i)}$ by the relevant $\mathscr{G}'_k$.
  Then this is the desired morphism.

  The last condition in the lemma follows because if $g(k) = g(k')$ then (as all
  morphisms are monic) $g_k = g_{k'}$ and the covering family 
  \[\{g_l:Z_l \rto A_{g(k)}\}_{l\in g^{-1}(g(k))}\] is not a finite disjoint
  covering family; hence, $g$ is not a valid morphism of $\W(\C)$, which is a
  contradiction.  Thus $g(k)\neq g(k')$ if the two morphisms $f_{g(k)}g_k$ and
  $f_{g(k')}g_{k'}$ are equal.
\end{proof}

In order to prove that $|I|$ is a homotopy equivalence we need a special
case of Quillen's Theorem A.

\begin{lemma} \label{lem:specialcase} Let $\C$ be a category where all morphisms
  are monomorphisms, and where any diagram $B\rto A \lto C$ can be completed to
  a commutative square.  Suppose that $\D$ is a subcategory which contains all
  objects and has the property that for any morphism $f:A \rto B$ in $\C$ there
  exists a morphism $g:Z\rto A$ in $\D$ such that $fg$ is in $\D$.  Then the
  inclusion $F:\D\rto \C$ is a homotopy equivalence.
\end{lemma}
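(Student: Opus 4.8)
The plan is to deduce this from Quillen's Theorem A applied to the inclusion functor $F:\D\rto\C$: I will show that the comma category $F/A$ is contractible for every object $A$ of $\C$. Since $\D$ contains all objects of $\C$, an object of $F/A$ is a pair $(D,f)$ with $f:D\rto A$ a morphism of $\C$, and a morphism $(D,f)\rto(D',f')$ is a morphism $h:D\rto D'$ of $\D$ with $f'h=f$. The first thing to observe is that $F/A$ is a preorder: because $f'$ is a monomorphism (every morphism of $\C$ is one), the equation $f'h=f$ has at most one solution $h$, so there is at most one morphism between any two given objects. The category $F/A$ is also nonempty, since it contains $(A,1_A)$. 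Hence by Lemma~\ref{lem:cofiltpre} it is enough to show that $F/A$ is cofiltered.

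To do that, take two objects $(D_1,f_1)$ and $(D_2,f_2)$ of $F/A$. First complete the cospan $D_1\rto^{f_1}A\lto^{f_2}D_2$ to a commutative square in $\C$, with apex $P$ and legs $p_1:P\rto D_1$ and $p_2:P\rto D_2$ satisfying $f_1p_1=f_2p_2$. The object $P$ automatically lies in $\D$, but the legs $p_1,p_2$ need not be morphisms of $\D$; repairing this is where the hypothesis on $\D$ is used, and it has to be invoked twice. Applying the hypothesis to $p_1:P\rto D_1$ produces $g:Z\rto P$ in $\D$ with $p_1g$ in $\D$; applying it again to $p_2g:Z\rto D_2$ produces $g':Z'\rto Z$ in $\D$ with $p_2gg'$ in $\D$. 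Since $\D$ is closed under composition, $p_1gg'=(p_1g)g'$ is also in $\D$. Setting $e_i=p_igg':Z'\rto D_i$, both $e_1$ and $e_2$ are morphisms of $\D$, and $f_1e_1=f_2e_2$; writing $e$ for this common composite $Z'\rto A$, we obtain an object $(Z',e)$ of $F/A$ together with morphisms $e_i:(Z',e)\rto(D_i,f_i)$, i.e.\ a diagram $(D_1,f_1)\lto(Z',e)\rto(D_2,f_2)$ in $F/A$.

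This shows $F/A$ is cofiltered, hence contractible by Lemma~\ref{lem:cofiltpre}, and Quillen's Theorem A then gives that $NF:N\D\rto N\C$ is a homotopy equivalence. The only real obstacle is the step in the second paragraph: the square completing the cospan lives in $\C$, not $\D$, so one cannot directly use it as a cofiltering object, and the remedy is to apply the hypothesis on $\D$ successively to the two legs, using closure of $\D$ under composition to keep the already-corrected leg inside $\D$. Everything else—the preorder property, nonemptiness, and the compatibility with the maps to $A$—is immediate from the definitions.
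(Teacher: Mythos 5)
Your proof is correct and follows exactly the same approach as the paper: reduce to contractibility of $F/A$ via Quillen's Theorem A, observe that $F/A$ is a preorder because all morphisms are monic, and show it is cofiltered by completing the cospan to a square in $\C$ and then applying the hypothesis on $\D$ twice in succession (once to one leg, once to the composite with the correction) to push the apex down into a genuine cone in $\D$. The only cosmetic difference is notation; the argument matches the paper's step for step.
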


\begin{proof}
  We use Quillen's Theorem A to show that this functor is a homotopy
  equivalence.  As all morphisms are monomorphisms in $\C$, $\C/A$ is a preorder
  for all $A$.  Thus $F/A$ is also a preorder, and by Lemma~\ref{lem:cofiltpre}
  it suffices to show that it is cofiltered to show that it is contractible.
  Suppose that we are given two objects $f:B \rto A$ and $g:C\rto A$ in $F/A$.
  We then have the following diagram:
  \begin{diagram}
    { Z & Y & X & C \\ && B & A \\};
    \to{1-3}{1-4}^{f'} \to{1-3}{2-3}_{g'} \to{2-3}{2-4}^f \to{1-4}{2-4}^g
    \to{1-1}{1-2}^\beta \to{1-2}{1-3}^\alpha
  \end{diagram}
  Here, $f'$ and $g'$ complete $f$ and $g$ to a commutative square, as we
  assumed was possible in $\C$.  Let $\alpha$ be a morphism $Y \rto X$ in $\D$
  such that $g'\alpha$ is in $\D$; there exists such an $\alpha$ by the property
  in the lemma.  Similarly, let $\beta$ be a morphism $Z \rto Y$ in $\D$ such
  that $f'\alpha\beta$ is in $\D$.  Thus the morphism $Z \rto C$ is in $\D$, and
  the morphism $Z \rto B$ is in $\D$, so the object $gf'\alpha\beta:Z \rto A$ is
  an object of $F/A$ over both $f$ and $g$.
\end{proof}

We are now ready to show that $|I|$ is a homotopy equivalence.

\begin{lemma} \label{lem:iotagood}
  $|I|$ is a homotopy equivalence.
\end{lemma}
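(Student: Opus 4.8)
The plan is to deduce Lemma~\ref{lem:iotagood} from Lemma~\ref{lem:specialcase}, applied to the inclusion $\iota\colon\W(\C,\D)\rto\W(\C\bs\D)$. Three of the hypotheses of Lemma~\ref{lem:specialcase} are immediate: since $\C\bs\D$ is an assembler, Proposition~\ref{prop:Wprop}(1) shows that every morphism of $\W(\C\bs\D)$ is monic, Proposition~\ref{prop:Wprop}(2) shows that every cospan in $\W(\C\bs\D)$ completes to a commutative square, and $\W(\C,\D)$ contains every object of $\W(\C\bs\D)$ by construction. Before proceeding I would reduce to the case in which $\C$, and hence $\C\bs\D$, is closed, which is harmless (for instance by the reduction results of Section~\ref{sec:equiv}); this makes Lemma~\ref{lem:cutting-up} available and lets us identify disjointness in $\C\bs\D$ with the pullback lying in $\D$. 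What then remains is the lifting property of Lemma~\ref{lem:specialcase}: for every morphism $f$ of $\W(\C\bs\D)$ there is a morphism $g$ of $\W(\C,\D)$ with $fg$ again in $\W(\C,\D)$.

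To build $g$, write $f\colon\SCob{A}{i}\rto\SCob{B}{j}$ and work one target index $j$ at a time. For each $j$ the family $\{f_i\colon A_i\rto B_j\}_{i\in f^{-1}(j)}$ is disjoint in $\C\bs\D$, so $A_i\times_{B_j}A_{i'}$ lies in $\D$ for $i\neq i'$; complete it by morphisms $\{h_l\colon D_l\rto B_j\}_l$ with $D_l\in\D$ noninitial to a covering family of $B_j$ in $\C$, as the definition of $\C\bs\D$ permits. Since $\D$ is a sieve, every pullback over $B_j$ against one of the $D_l$ again lies in $\D$, so the enlarged collection $\{f_i\}\cup\{h_l\}$ satisfies the hypothesis of Lemma~\ref{lem:cutting-up}; applying that lemma produces $g^{(j)}\colon\{Z^{(j)}_k\}_k\rto\SCob{A}{i}\sqcup\{D_l\}_l$ in $\W(\C)$ whose composites into $B_j$ are pairwise equal-or-disjoint, with any two equal composites having distinct parents among the $A_i$ and $D_l$. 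Now discard every $Z^{(j)}_k$ lying in $\D$, and let $g$ be the morphism into $\SCob{A}{i}$ assembled, over each $j$, from the surviving part of $g^{(j)}$. Note that every composite of $g^{(j)}$ that factors through some $D_l$ has its source in $\D$ (sieve), hence is discarded, so $g$ really does land in $\SCob{A}{i}$.

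It remains to verify that $g$ and $fg$ are morphisms of $\W(\C,\D)$; their sources and targets are tuples of noninitial objects, so only the covering conditions are at issue. For $g$: over each $A_i$ the surviving $g^{(j)}_k$ form a subfamily of the finite disjoint covering family $\{g^{(j)}_k\}_{k\in (g^{(j)})^{-1}(A_i)}$ supplied by Lemma~\ref{lem:cutting-up}, and the discarded members complete it back up and have domains in $\D$. For $fg$: over each $B_j$ the surviving composites $f_{g(k)}g_k$ are pairwise disjoint, because if two of them coincided then, their parents $A_{g(k)},A_{g(k')}$ being distinct, their common source would factor through the $\D$-object $A_{g(k)}\times_{B_j}A_{g(k')}$ and so lie in $\D$, contradicting survival. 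To exhibit the required $\D$-completion, observe that the full collection of composites of $g^{(j)}$ into $B_j$ covers $B_j$ and is pairwise equal-or-disjoint; keeping all surviving composites (which are pairwise distinct, being pairwise disjoint with noninitial source) together with one representative of each coincidence class among the discarded composites yields a finite disjoint covering family of $B_j$ in $\C$ whose noninitial-source part is exactly $fg$ over $B_j$ and whose remaining members have domains in $\D$. Hence $fg\in\W(\C,\D)$, and Lemma~\ref{lem:specialcase} gives that $|\iota|$ is a homotopy equivalence.

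The main obstacle is precisely this last bookkeeping. Lemma~\ref{lem:cutting-up} only guarantees that composites are equal \emph{or} disjoint, so the naive composite $fg$ need not be a morphism of $\W(\C\bs\D)$ at all; what saves the argument is that any coincidence of composites over different parents forces the offending source into the sieve $\D$, where it gets thrown away, and one then only has to be disciplined about retaining a single representative of each remaining coincidence class so that the $\D$-completion is genuinely disjoint. The reduction to the closed case and the translation of disjointness in $\C\bs\D$ into a statement about pullbacks in $\C$ are the other points where one must be a little careful, but they are routine.
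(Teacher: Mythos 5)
Your proposal is correct and follows essentially the same route as the paper: reduce via Lemma~\ref{lem:specialcase} to the lifting property, treat one target object at a time, enlarge the given disjoint family by $\D$-morphisms to a covering family of $B$ in $\C$, apply Lemma~\ref{lem:cutting-up}, discard the $\D$-sources (using that $\D$ is a sieve to see the survivors land back in $\SCob{A}{i}$), and observe that any coincidence of composites forces the source into $\D$ and hence among the discarded pieces. The paper handles the coincidence bookkeeping by passing to the \emph{set} of composites $\{f_{\hat g(k)}\hat g_k\}_{k\in K}$ rather than explicitly choosing representatives of coincidence classes, but this is the same idea.
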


\begin{proof}
  We show that $|I|$ is a homotopy equivalence using
  Lemma~\ref{lem:specialcase}.  In $\W(\C\bs\D)$ all morphisms are monic and any
  diagram $B\rto A \lto C$ can be completed to a commutative square.  The only
  property that remains to be checked is that for any morphism $f:A\rto B$ in
  $\W(\C\bs\D)$, there exists a morphism $g:Z \rto A$ in $\W(\C,\D)$ such that
  $fg$ is in $\W(\C,\D)$.  As any morphism in $\W(\C\bs\D)$ can be written as a
  coproduct of morphisms $f:\SCob{A}{i} \rto \{B\}$ it suffices to check that
  $g$ exists for such morphisms.

  Consider any morphism $\tilde f:\{A_i\}_{i\in \tilde I}\rto \{B\}\in
  \W(\C\bs\D)$. The family $\{\tilde f_i:A_i \rto B\}_{i\in \tilde
    I}$ can be completed to a covering family in $\C$ by morphisms with domains
  in $\D$; write this whole family $\{f_i:A_i \rto B\}_{i\in I}$.  It satisfies
  the condition of Lemma~\ref{lem:cutting-up}, and thus we have a 
  \[\hat g:Z=\SCob{Z}{k} \rto \{A_i\}_{i\in I}\in \W(\C)\] 
  such that for all $k\neq k'\in K$, $f_{g(k)}g_k$ and $f_{g(k')}g_{k'}$ are
  either disjoint or equal.  Thus the family $\mathcal{F}=\{f_{g(k)}g_k:Z_k \rto
  B\}_{k\in K}$ is a finite covering family, but it may not be disjoint because
  for some $k\neq k'$ $f_{g(k)}g_k$ might equal $f_{g(k')}g_{k'}$.  However, in
  this case, $Z_k$ can complete $f_{g(k)}$ and $f_{g(k')}$ to a square.  By
  Lemma~\ref{lem:cutting-up} in this case $g(k) \neq g(k')$, which means that
  $Z_k$ is in $\D$.  Thus the only obstructions to $\mathcal{F}$ defining a morphism
  of $\W(\C)$ is duplication of some morphisms with domains in $\D$.
  Consequently, the morphism defined by the family
  \[\{f_{g(k)}g_k:Z_k \rto B\}_{k\in \tilde K},\] where $\tilde K = \{k\in
  K\,|\, Z_k\in \C\bs\D\}$, is a valid morphism of $\W(\C,\D)$.  We therefore
  define $g:\{Z_k\}_{k\in \tilde K} \rto \{A_i\}_{i\in \tilde I}$ to be the
  restriction of $\hat g$ to $\tilde K$; this also gives a valid morphism of
  $\W(\C,\D)$.

  To check that $g$ satisfies the conditions of Lemma~\ref{lem:specialcase} it
  remains to check that the composition $\tilde fg$ is in $\W(\C,\D)$.  $\tilde
  fg$ is the morphism $\{Z_k\}_{k\in \tilde K} \rto \{B\}$; we need to check
  that this covering family can be completed to a finite disjoint covering
  family in $\C$.  Consider the set $\{f_{\hat g(k)}\hat g_k\,|\, k\in K\}$.
  These are a covering family of $B$ in $\C$, as they are the refinement of a
  covering family $\{f_i:A_i \rto B\,|\, i\in I\}$ by the covering families
  $\{Z_k \rto A_i\,|\, k\in \hat g^{-1}(i)\}$.  From the construction of $\hat
  g$, all distinct elements in the set are disjoint, so this is a finite
  disjoint covering family which is the completion of the family we are
  considering.
\end{proof}

This wraps up the proof of Theorem~\ref{thm:cofibercalc}.  
\bibliography{IZ-all}

\end{document}